\renewcommand{\bar}{\overline}
\newcommand{\lint}{\llbracket}
\newcommand{\rint}{\rrbracket}
\numberwithin{equation}{section}
\newtheorem{theorem}{Theorem}[section]
\newtheorem{lemma}[theorem]{Lemma}
\newtheorem{proposition}[theorem]{Proposition}
\newtheorem{rem}[theorem]{Remark}
\newcommand{\Diam}{\mathrm{Diam}}
\newcommand{\dd}{\mathrm{d}}
\newcommand{\ind}{\mathbf{1}}
\renewcommand{\tilde}{\widetilde}
\renewcommand{\hat}{\widehat}
\newcommand{\cc}{\complement}
\newcommand{\cA}{{\ensuremath{\mathcal A}} }
\newcommand{\cB}{{\ensuremath{\mathcal B}} }
\newcommand{\cF}{{\ensuremath{\mathcal F}} }
\newcommand{\cD}{{\ensuremath{\mathcal D}} }
\newcommand{\cZ}{{\ensuremath{\mathcal Z}} }
\newcommand{\bP}{{\ensuremath{\mathbf P}} }
\newcommand{\bQ}{{\ensuremath{\mathbf Q}} }
\newcommand{\bE}{{\ensuremath{\mathbf E}} }
\DeclareMathSymbol{\leqslant}{\mathalpha}{AMSa}{"36} 
\DeclareMathSymbol{\geqslant}{\mathalpha}{AMSa}{"3E} 
\DeclareMathSymbol{\eset}{\mathalpha}{AMSb}{"3F}     
\newcommand{\bbE}{{\ensuremath{\mathbb E}} }
\newcommand{\bbN}{{\ensuremath{\mathbb N}} }
\newcommand{\bbP}{{\ensuremath{\mathbb P}} }
\newcommand{\bbR}{{\ensuremath{\mathbb R}} }
\newcommand{\bbZ}{{\ensuremath{\mathbb Z}} }
\newcommand{\gep}{\varepsilon}       
\newcommand{\gO}{\Omega}
\newcommand{\gl}{\lambda}
\def\captionfont@{\footnotesize}
\def\captionheadfont@{\scshape}
\long\def\@makecaption#1#2{%
  \vspace{2mm}
  \setbox\@tempboxa\vbox{\color@setgroup
    \advance\hsize-6pc\noindent
    \captionfont@\captionheadfont@#1\@xp\@ifnotempty\@xp
        {\@cdr#2\@nil}{.\captionfont@\upshape\enspace#2}%
    \unskip\kern-6pc\par
    \global\setbox\@ne\lastbox\color@endgroup}%
  \ifhbox\@ne 
    \setbox\@ne\hbox{\unhbox\@ne\unskip\unskip\unpenalty\unkern}%
  \fi
  \ifdim\wd\@tempboxa=\z@ 
    \setbox\@ne\hbox to\columnwidth{\hss\kern-6pc\box\@ne\hss}%
  \else 
    \setbox\@ne\vbox{\unvbox\@tempboxa\parskip\z@skip
        \noindent\unhbox\@ne\advance\hsize-6pc\par}%
\fi
  \ifnum\@tempcnta<64 
    \addvspace\abovecaptionskip
    \moveright 3pc\box\@ne
  \else 
    \moveright 3pc\box\@ne
    \nobreak
    \vskip\belowcaptionskip
  \fi
\relax
}
\def\writefig#1 #2 #3 {\rlap{\kern #1 truecm
\raise #2 truecm \hbox{#3}}}
\title{Critical Gaussian Multiplicative Chaos for singular measures}
\author{Hubert Lacoin}
\address{
  IMPA, Institudo de Matem\'atica Pura e Aplicada, Estrada Dona Castorina 110
Rio de Janeiro, CEP-22460-320, Brasil. 
}
\begin{document}

 \begin{abstract}
 Given $d\ge 1$, we provide a construction of the random measure - the  critical Gaussian Multiplicative Chaos - formally defined $e^{\sqrt{2d}X}\dd \mu$
where $X$ is a $\log$-correlated Gaussian field and $\mu$ is a locally finite  measure on $\bbR^d$.
Our construction generalizes the one performed in the case where $\mu$ is the Lebesgue measure. It requires that the measure $\mu$ 
is sufficiently spread out, namely that for $\mu$ almost every $x$ we have 
$$  \int_{B(0,1)}\frac{\mu(\dd y)}{|x-y|^{d}e^{\rho\left(\log \frac{1}{|x-y|} \right)}}<\infty, $$
for any compact set where $\rho:\bbR_+\to \bbR_+$ can be chosen to be any lower envelope function for the $3$-Bessel process (this includes $\rho(x)=x^{\alpha}$ with $\alpha\in (0,1/2)$).
We prove that three distinct random objects converge to a common limit which defines the critical GMC: the derivative martingale, the critical martingale, and the exponential of the mollified field. We also show that the above criterion for the measure $\mu$ is in a sense optimal.
\\[10pt]
  2010 \textit{Mathematics Subject Classification: 60F99,  	60G15,  	82B99.}\\
  \textit{}
 \end{abstract}

\maketitle

 
 \section{Introduction}

We consider $K: \bbR^d\times \bbR^d \to (-\infty,\infty]$ to be a positive definite kernel on $\bbR^d$ ($d\ge 1$ is fixed) which admits a decomposition in the following form
 \begin{equation}\label{fourme}
  K(x,y):= \log \frac{1}{|x-y|}+L(x,y),
 \end{equation}
where $L$ is a continuous function ($\log 0=\infty$ by convention). 
A kernel $K$ is  positive definite if for any  $\rho\in C_c(\bbR^d)$ ($\rho$ continuous with compact support)
 $$\int_{\cD^2 }  K(x,y) \rho(x)\rho(y)\dd x \dd y\ge 0.$$
Given  $\mu$ a locally finite measure on $\bbR^d$, 
the \textit{Gaussian Multiplicative Chaos} with reference measure $\mu$ and intensity $\alpha>0$, is formally defined as the random measure which is obtained by integrating the exponential of a centered Gaussian field $X$ with covariance $K$, that is to say
 \begin{equation}\label{defGMC}
 e^{\alpha X(x)} \mu(\dd x) 
 \end{equation}
The main difficulty that comes up when  trying to give a mathematical interpretation to the expression  \eqref{defGMC} is 
that a field with a covariance of the type \eqref{fourme} can be defined only as a random distribution and thus $X$ is not defined pointwise.
The problem of rigorously defining GMC was introduced by Kahane in \cite{zbMATH03960673}. The standard procedure (for any $\alpha\ge 0$) is to  use a sequence of  approximation of $X$ 
and then pass to the limit. Mostly  two kinds of approximation of $X$ have been considered in the literature: 
\begin{itemize}
 \item [(A)] A mollification of the field, $X_{\gep}$,  via convolution with a smooth kernel on scale $\gep$,
 \item [(B)] A martingale approximation, $X_t$, via an integral decomposition of the kernel $K$.
\end{itemize}
Rigorous definitions of $X_{\gep}$ and $X_t$ are given in the next subsections.
The idea is then to consider the measure $M^{\alpha}_\gep(\cdot,\mu)$ defined by
\begin{equation}\label{defalpha}
M^{\alpha}_\gep(f,\mu):=\int_{\bbR^d} f(x) e^{\alpha X_{\gep}(x)-\frac{\alpha^{2}}{2}\bbE[X_{\gep}(x)]} \mu(\dd x),
\end{equation}
or $M^{\alpha}_t$, defined analogously for the martingale approximation $X_t$, and let either $\gep\to 0$ or $t\to \infty$.
For context, let us briefly review the main results that have been obtained concerning the convergence of $M^{\alpha}_t$, $M^{\alpha}_\gep$ in the case where $\mu$ is the Lebesgue measure.
It has been proved that 
for $\alpha\in [0,\sqrt{2d})$ (see \cite{Natele,Shamov} for the latest contributions) that both $M^{\alpha}_t$ and $M^{\alpha}_\gep$  converge to a nontrivial limit $M^{\alpha}$, which does not
depend on the 
mollifier (we refer the reader to the introduction of \cite{Natele} or to the review \cite{RVreview} for an account of the steps leading to this result).
When $\alpha=\sqrt{2d}$, the asymptotic behavior of  $M^{\alpha}_\gep$  differs from the case $\alpha\in [0,\sqrt{2d})$. We have $\lim_{\gep\to 0}M^{\sqrt{2d}}_\gep(f)=0$ for any bounded measurable $f$ and a 
renormalization procedure is required to obtain a nontrivial limit. 
More precisely it has been proved that $\sqrt{\pi\log (1/\gep)/2} M^{\sqrt{2d}}_\gep$ converges to a nontrivial limit that does not depend on the mollifier. This was achieved in three steps, starting 
with the martingale approximation $X_t$: 
\begin{itemize}
 \item [(i)] In \cite{MR3262492}, the convergence of the random (signed) measure  
 \begin{equation}\label{derivtiv}
 D_t(\dd x):=(\sqrt{2d}\bbE[X_{t}(x)]- X_t) e^{\sqrt{2d} X_{t}(x)- d \bbE[X_{t}(x)]} \dd x,
 \end{equation}
 to a nontrivial nonnegative measure $D_\infty$ was established.
 \item [(ii)] In \cite{MR3215583}, it was proved that $\sqrt{\pi t/2}  M^{\sqrt{2d}}_t$ converges to the same limit.
 \item [(iii)] The  convergence of $\sqrt{\pi \log (1/\gep)/2} M^{\sqrt{2d}}_\gep$ towards $D_{\infty}$ was shown in  \cite{MR3613704}. 
\end{itemize}
We refer to \cite{MR4396197} for a throughout reviews of these steps and to \cite{critix} for an alternative short and self-contained proof of these three statements.

\medskip

In the case of a general measure $\mu$, the convergence of $M^{\alpha}_{\gep}(f,\mu)$ requires $\mu$ to be sufficiently spread out.
More specifically, it has been proved in \cite{Natele} that $M^{\alpha}_{\gep}(f,\mu)$ converges when there exists $\gamma>\alpha^2/2$ such that for any compact set $K\subset \bbR^d$ we have
\begin{equation}\label{capasouscrit}
 \int_{K^2}\frac{\mu(\dd x)\mu(\dd y)}{|x-y|^{\gamma}} <\infty.
\end{equation}
The above condition states in particular that any set of positive measure has Hausdorf dimension at least equal to $\gamma$
(see for instance \cite[Theorem 4.11]{falcofractal}).

\medskip

In the present manuscript we are interested in the critical case $\alpha=\sqrt{2d}$ and  we want to extend the results proved in \cite{MR3262492, MR3215583, MR3613704}
to measures which are singular with respect to Lebesgue.
For this to be possible, we need some assumption that quantifies how spread out  the measure $\mu$ is.
The specific requirement we have for $\mu$ requires the introduction of a technical notion.

\medskip

Given $\rho: \bbR_+\to \bbR_+$ an increasing function, we say that $\rho$ is a \text{lower-envelope} for the $3$-Bessel process if 
given $(\beta_t)_{t\ge 0}$ is a $3$-Bessel process (the process given by the norm of a three dimensional Brownian Motion) we almost surely have have
\begin{equation}\label{dvorerdos}
\lim_{t\to \infty} \beta_t /\rho(t)=\infty.
\end{equation}
The  Dvoretzky–Erd\"os test (see for instance \cite[Theorem 3.22]{Peresmortersbrownian}), guarantees that $\rho$ is a lower envelope function if and only if 
\begin{equation}\label{besselcond}
 \int^{\infty}_{1} \rho(u) u^{-3/2}\dd u<\infty.
\end{equation}Examples of lower-envelope functions include 
\begin{equation}\label{examples}
\rho^{(1)}(u)=u^{\gamma} \text{ with } \gamma\in(0,1/2) \quad \text{ and } \quad   \rho^{(2)}(u)=u^{1/2}(\log( u+2))^{-\zeta} \text{ with } \zeta>1.
\end{equation}
In the present paper, we assume that there exists $\rho$ a lower-envelope of the $3$-Bessel process such that for any compact set $K$, $\mu$ satisfies  
\begin{equation}\label{clazomp}
 \int_{K\times K}\frac{\mu(\dd x)\mu(\dd y) \ind_{\{|x-y|\le 1\}}}{|x-y|^{d}e^{\rho\left(\log \frac{1}{|x-y|} \right)}}<\infty.
\end{equation}
We prove that under assumption \eqref{clazomp}, $M^{\sqrt{2d}}_t(\cdot,\mu)$, $M^{\sqrt{2d}}_\gep(\cdot,\mu)$ and $D_t(\cdot,\mu)$ (the latter quantity being defined as in \eqref{derivtiv} but integrating with respect to $\mu$), converge (after an appropriate renormalization in the two first cases), to the same limiting measure $D_\infty(\cdot,\mu)$, which defines the critical GMC with reference measure $\mu$.

\begin{rem}
The condition \eqref{clazomp} is stronger than the one mentioned in the abstract, but it turns out that in practice they are equivalent. Indeed any measure satisfying the former criterion can be arbitrarily well approximated by a measure that satisfies \eqref{clazomp}.
We refer to Sections \ref{extense} and \ref{relax} for more discussion on this issue.
\end{rem}

\section{Model and results}

\subsection{The exponential of a mollified, $\log$-correlated field}\label{molly}

Let us now provide rigorous definitions for the mathematical objects discussed in the introduction.

\subsubsection*{Log-correlated fields  defined as distributions}

Since $K$ is infinite on the diagonal, it is not possible to define directly a Gaussian field indexed by $\bbR^d$ with covariance function $K$.
We consider instead a process indexed by test functions. 
We define $\hat K$ as the following quadratic form on  $C_c(\bbR^d)$  
\begin{equation}\label{hatK}
 \hat K(\rho,\rho')=
\int_{\cD^2}  K (x,y)\rho(x)\rho'(y)\dd x \dd y,
\end{equation}
and define $X= \langle X, \rho \rangle_{\rho\in C_c(\bbR^d)}$ as a the centered Gaussian field indexed by $C_c(\bbR^d)$ with covariance kernel given by $\hat K$.

\subsubsection*{Mollification of the field}

The random distribution $X$ can be approximated by a sequence of  functional fields - that is,  fields indexed by $\bbR^d$ - 
by the mean of mollification.
 Consider $\theta$ a nonnegative function in $C_c^{\infty}(\bbR^d)$ (meaning infinitely differentiable with compact support)  whose support is included in  $B(0,1)$ (for the remainder 
 of the paper $B(x,r)$ denotes the closed 
 Euclidean ball of center $x$ and radius $r$) 
 and which satisfies  $\int_{B(0,1)} \theta(x) \dd x=1.$
We define for $\gep\in(0,1)$,
$\theta_{\gep}:=\gep^{-d} \theta( \gep^{-1}\cdot)$, set
and consider $(X_{\gep}(x))_{x\in \bbR^d}$, the mollified version of $X$, that is
\begin{equation}\label{xmolly}
 X_{\gep}(x):= \langle X, \theta_{\gep}(x-\cdot) \rangle
\end{equation}
One can check that the field $X_{\gep}(\cdot)$ has covariance 
\begin{equation}\label{labig}
K_{\gep}(x,y):=\bbE[X_{\gep}(x)X_{\gep}(y)]=
\int_{\bbR^{2d}} \theta_{\gep}(x-z_1) \theta_{\gep}(y-z_2)
K(z_1,z_2)\dd z_1 \dd z_2.
\end{equation}
We set $K_{\gep}(x):=K_{\gep}(x,x)$ and use a similar convention for other covariance functions.
Since $K_{\gep}$ is infinitely differentiable, by Kolmogorov's criterion (see e.g.\  \cite[Theorem 2.9]{legallSto}),  there exists a continuous modification of $X_{\gep}(\cdot)$ 
(in the remainder of the paper, we always consider the continuous modification of a process when it exists) we can make sense of integrals involving $X_\gep$.

\subsubsection*{Taking the exponential}
 We let $\mathcal B_b$ denote the bounded Borel subsets of $\bbR^d$ and $B_b$ denote the bounded Borel functions with bounded support
 \begin{equation}\label{measurable}
 B_b=B_b(\bbR^d)= \left\{ f \  \text{measurable} \ : \sup_{x\in \bbR^d} |f(x)|<\infty , \ \{ x \ : \ |f(x)|\ne 0\} \in \cB_b  \right\},
 \end{equation}
For a fixed locally finite Borel measure $\mu$, we  define a random measure     $M_{\gep}$ by setting for $f\in B_b$ (recall \eqref{defalpha}, we chose for the remainder of the paper not to underline
   the dependence in $\alpha$ and $\mu$ in the notation for better readability)
 \begin{equation}\label{forboundedfunctions}
    M_{\gep}(f):=\int_{\bbR^d} f(x) e^{\sqrt{2d} X_{\gep}(x)- d K_\gep(x)}\mu(\dd x).
 \end{equation}
 We set $M_{\gep}(E):=M_{\gep}(\ind_{E})$ for $E\in \mathcal B_b$ and keep a similar convention for other measures.

\subsection{Star-scale invariance and our assumption on $K$}\label{starscaleconstru}

We assume throughout the paper that the kernel $K$ has an \textit{almost star-scale invariant} part (see Remark \ref{comment} below for a comment concerning this assumption). 
Following a terminology introduced in \cite{junnila2019}, we say that a the kernel $K$ defined on $\bbR^d$ is \textit{almost star-scale invariant}
if it can be written in the form  such that
\begin{equation}\label{iladeuxstar}
 \forall x,y \in \bbR^d,\  K(x,y)=\int^{\infty}_{0} (1-\eta_1 e^{-\eta_2 t}) \kappa(e^{t}(x-y))\dd t,
\end{equation}
 where  $\eta_1\in[0,1]$ and $\eta_2>0$ are constants and the function $\kappa\in C^{\infty}_c(\bbR^d)$ is radial, nonnegative and definite positive. More precisely we assume the following:
\begin{itemize}
 \item [(i)] $\kappa\in C_c^{\infty}(\bbR^d)$ and there exists $\tilde \kappa : \ \bbR^+\to [0,\infty)$ such that $\kappa(x):=\tilde \kappa(|x|)$,
 \item [(ii)] $\tilde \kappa(0)=1$ and $\tilde\kappa(r)=0$ for $r\ge 1$,
 \item [(iii)]The mapping $(x,y)\mapsto \kappa(x-y)$ defines a positive definite-kernel on $\bbR^d\times \bbR^d$ or equivalently the Fourier transform of $\kappa$ satisfies $\hat \kappa(\xi)\ge 0$ for all $\xi\in \bbR´^d$. 
\end{itemize}
We say that a kernel $K$   \textit{has an almost star-scale invariant part}, if 
\begin{equation}\label{3star}
 \forall x,y\in \bbR^{d}, \   K(x,y)=K_0(x,y)+ \bar K(x,y)
\end{equation}
where $\bar K(x,y)$ is an almost star-scale invariant kernel and $K_0$ is positive  definite and H\"older continuous on $\bbR^{2d}$. 
 Given $K$ with an almost star-scale invariant part, and using the decomposition \eqref{iladeuxstar} for $\bar K$, we set
\begin{equation}\label{defqt}
 Q_t(x,y):=  \kappa(e^{t'}(x-y))
 \end{equation}
where $t'$ is defined as the unique positive solution of $ t'-\frac{\eta_1}{\eta_2}(1-e^{-\eta_2 t'})=t.$
We set 
\begin{equation}\label{kkttt}\begin{split}
 K_t(x,y)&:=K_0(x,y)+ \int^t_0 Q_s(x,y) \dd s \\
 &=K_0(x,y) + \int_{0}^{t'} (1-\eta_1 e^{-\eta_2 s}) \kappa(e^{s}(x-y))\dd s=:K_0(x,y)+ \bar K_t(x,y).
\end{split}\end{equation}
Note that we have $\lim_{t\to \infty} K_t(x,y)=K(x,y)$ and $\bar K_t(x)=t$.
If $K$ satisfies \eqref{iladeuxstar} then
\begin{equation}
 L(x,y):= K(x,y)+\log |x-y|,
\end{equation}
can be extended to a continuous function on $\bbR^{2d}$, so that a kernel $K$ with an almost star-scale invariant part
can always be written in the form \eqref{fourme}.

\begin{rem}
 There is an obvious conflict of notation between $K_t$ introduced above  and $K_{\gep}$ introduced in \eqref{labig} 
 and the same can be said about $X_t$ and $M_t$ introduced in the next section. However this abuse should not cause any confusion since we will keep using the letter 
 $\gep$ for quantities related to the mollified field $X_{\gep}$ and latin letters for quantities related to the martingale approximation $X_t$.  
\end{rem}

\subsection{Convergence of the mollified critical GMC}

Our first main result is the convergence of the measure $M_{\gep}$ - properly rescaled - 
towards a limit $M'$. 
We need to specify a topology on the set of locally finite measure. We say that a sequence of locally finite measures 
$(\mu_n)$ converges weakly to $\mu$ if
\begin{equation}\label{wikiwik}
\forall f \in C_c(\bbR^d), \quad  \lim_{n\to \infty} \int_{\bbR^d} f(x)\mu_n(\dd x)= \int_{\bbR^d} f(x) \dd \mu(\dd x).
\end{equation}
The topology of weak convergence for locally finite nonnegative measures is metrizable and separable, hence we can associate to it a notion of convergence in probability for a sequence of 
 random measure. Recall that the topological support of a measure is defined as the complement of the largest open set with zero measure (or as the smallest closed set with full measure).

\begin{theorem}\label{mainres}
 If $X$ is a centered Gaussian field whose covariance kernel $K$  has an almost star-scale invariant part, $\mu$ is a locally finite measure that satisfies \eqref{clazomp} and  $E\in \mathcal B_b$ is such that $\mu(E)>0$. Then
there exists an a.s.\ positive random variable $M'(E)$ such that the following convergence holds in probability
 \begin{equation}\label{inproba}
  \lim_{\gep \to 0}\sqrt{\frac{\pi\log (1/\gep)}{2}}M_{\gep}(E)=M'(E).
 \end{equation}
 The limit satisfies $\bbE[M'(E)]=\infty$ and  does not depend on the specific mollifier $\theta$ used to define $X_{\gep}$. Furthermore 
 there exists a modification of the process $(M'(E))_{E\in \mathcal B_b}$ which is a locally finite random measure $M'$ such that the following convergence holds weakly in probability
 \begin{equation*}
    \lim_{\gep \to 0}\sqrt{\frac{\pi\log (1/\gep)}{2}}M_{\gep}=M'.
 \end{equation*}
  $M'$ is atomless and its  topological support a.s.\ coincides with that of $\mu$ 
\end{theorem}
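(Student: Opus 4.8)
The plan is to run the three-step programme used for the Lebesgue measure --- steps (i)--(iii) of the introduction --- adapting each step to a singular reference measure $\mu$; assumption \eqref{clazomp} is brought in at exactly the places where moment estimates on the approximating measures are required. The first step is the almost-sure convergence of the derivative martingale $D_t(E,\mu)$ (defined as in \eqref{derivtiv}, integrating against $\mu$) to a nonnegative limit $D_\infty(E,\mu)$ which is a.s.\ strictly positive when $\mu(E)>0$, together with $\bbE[D_\infty(E,\mu)]=\infty$. Since $D_t$ is only a signed martingale, I would first truncate the field along a barrier to obtain a nonnegative supermartingale, prove its convergence, and then remove the truncation. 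The key input is the rooted (Peyri\`ere-type) change of measure: tilting by $e^{\sqrt{2d}X_t(x)-dt}$ at a $\mu$-sampled root $x$ makes $\sqrt{2d}\,t - X_t(x)$ behave, along the root and up to a further conditioning, like a three-dimensional Bessel process $\beta$, and the convergence of $D_t(E,\mu)$ then reduces to the a.s.\ finiteness of an additive functional of $\beta$ and $\mu$ of the form $\int_B \mu(\dd y)\,|x-y|^{-d}\,e^{-c\,\beta_{\log(1/|x-y|)}}$. This is precisely where \eqref{clazomp} enters: because $\rho$ is a lower envelope for $\beta$, the event $\{\beta_t\ge\rho(t)\text{ for all large }t\}$ has full probability by the Dvoretzky--Erd\"os test \eqref{besselcond}, and on that event the functional is dominated by the integral in \eqref{clazomp} (the value of $c$ being irrelevant, since the lower-envelope property is stable under scaling). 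The non-integrability $\bbE[D_\infty(E,\mu)]=\infty$ reflects the non-uniform-integrability of $(D_t)$.

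The second step is the Seneta--Heyde norming for the martingale approximation: $\sqrt{\pi t/2}\,M_t(E,\mu)\to D_\infty(E,\mu)$ in probability. Since $\bbE[M_t(E,\mu)]=\mu(E)$, no $L^1$ statement is available, so the argument is run on the good event that the field stays below the critical slope: one introduces the barrier-truncated measures $M_t^{(K)}(E,\mu)$, establishes via a first- and second-moment computation --- the second moment being a double integral over $E\times E$ that is finite precisely by \eqref{clazomp} --- that $\sqrt{\pi t/2}\,M_t^{(K)}(E,\mu)$ converges in $L^2$ as $t\to\infty$, and then controls the truncation defect as $K\to\infty$, identifying the limit with $D_\infty(E,\mu)$ through the first step.

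The third step transfers this to the mollified field. Coupling $X_\gep$ with $X_t$ and estimating the covariance discrepancy $K_\gep(x,y)-K_{t(\gep)}(x,y)$ for $t(\gep)=\log(1/\gep)$ up to an additive constant --- which is uniformly bounded, with good decay, because $\bar K$ is almost star-scale invariant and $K_0$ is H\"older --- a Kahane convexity (Gaussian interpolation) argument combined with the uniform-integrability-type bounds of the second step yields \eqref{inproba} with $M'(E):=D_\infty(E,\mu)$, and positivity of $M'(E)$ and $\bbE[M'(E)]=\infty$ are inherited. The same comparison gives mollifier independence, since two mollified covariances differ by a bounded kernel with no effect on the renormalized limit. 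To upgrade the $E$-wise convergence to a random measure, note that $(M'(E))_{E\in\mathcal B_b}$ is a.s.\ finitely additive and, thanks to $\sqrt{\pi t/2}\,\bbE[M_t^{(K)}(E,\mu)]\le C_K\,\mu(E)$, a.s.\ continuous from above, so a Kolmogorov/Carath\'eodory extension produces a locally finite random measure $M'$ with the prescribed marginals; weak convergence in probability follows by testing against a countable convergence-determining subfamily of $C_c(\bbR^d)$. Atomlessness follows by a routine argument: \eqref{clazomp} already forces $\mu$ to be atomless, and the second-step moment bounds show that $M'$ assigns, as $r\to0$, vanishing mass to the balls $B(x,r)$, uniformly in $x$, because \eqref{clazomp} makes $\int_{\{|x-y|\le 2r\}}|x-y|^{-d}e^{-\rho(\log(1/|x-y|))}\mu(\dd x)\mu(\dd y)\to0$. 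For the support: $M_\gep$ is absolutely continuous with respect to $\mu$, hence carried by $\mathrm{Supp}(\mu)$, a property inherited by the weak limit, so $\mathrm{Supp}(M')\subseteq\mathrm{Supp}(\mu)$; conversely every ball $B$ with rational centre in $\mathrm{Supp}(\mu)$ and rational radius has $\mu(B)>0$, hence $M'(B)>0$ a.s.\ by \eqref{inproba}, and intersecting these countably many a.s.\ events gives $\mathrm{Supp}(\mu)\subseteq\mathrm{Supp}(M')$ a.s.

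The heart of the matter --- and the step I expect to be hardest --- is the combination of the first two steps for a genuinely singular $\mu$: one must show that the field seen from a $\mu$-typical point under the rooted change of measure, conditioned to stay below the critical slope, contributes only a $\mu\otimes\mu$-integrable quantity, and this is exactly what the $3$-Bessel lower-envelope condition \eqref{clazomp} is designed to guarantee (and, as the later part of the paper shows, essentially cannot be weakened). A persistent secondary difficulty, absent from the sub-critical regime, is that $\bbE[M'(E)]=\infty$ forces every convergence to be routed through the barrier-truncated measures $M_t^{(K)}$ and a delicate $K\to\infty$ limit, rather than through a direct $L^1$ or $L^2$ bound on the untruncated objects.
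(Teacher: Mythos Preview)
Your three-step outline is the right programme and matches the paper's architecture, but there is a genuine gap in step~2 (and, by inheritance, in step~3). With a \emph{single} barrier truncation at level $q$ (your $K$), the second moment of $\sqrt{t}\,M^{(q)}_t(E,\mu)$ is controlled by a double integral of order
\[
\int_{E^2}\frac{\mu(\dd x)\mu(\dd y)}{|x-y|^{d}\bigl(\log\tfrac{1}{|x-y|}\bigr)^{3/2}},
\]
which is condition~\eqref{condx} of the paper, \emph{strictly stronger} than \eqref{clazomp} (the paper explicitly notes that the Brownian occupation measure satisfies \eqref{clazomp} but not \eqref{condx}). So your claim that ``the second moment \dots\ is finite precisely by \eqref{clazomp}'' does not hold with a single barrier. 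The paper's key technical novelty is a \emph{double} truncation: on top of the flat barrier at $\sqrt{2d}s+q$ one imposes a curved barrier at $\sqrt{2d}s+q-\rho_r(s)$, with $\rho_r(u)=\rho(u+r)-\rho(r)$ (see \eqref{aqx}--\eqref{trunk2}). This extra restriction is exactly what injects the factor $e^{-\sqrt{2d}\rho(u)}$ into the second-moment kernel (Proposition~\ref{lateknik}) and makes it integrable under \eqref{clazomp}. The doubly-truncated $D^{(q,r)}_t$ is then bounded in $L^2$; one shows it approximates $D^{(q)}_t$ in $L^1$ uniformly in $t$ as $r\to\infty$ (Proposition~\ref{listov}(ii), using that $\rho_r$ is still a Bessel lower envelope), and deduces uniform integrability of $D^{(q)}_t$ from the abstract criterion Proposition~\ref{metaprop}. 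Your spine/Peyri\`ere heuristic in step~1 correctly identifies \emph{why} \eqref{clazomp} is the natural condition --- the Bessel lower envelope $\beta_t\ge\rho(t)$ is precisely what turns the off-spine contribution into the integrand of \eqref{clazomp} --- but the double truncation is the device that makes this heuristic into an honest $L^2$ bound.

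A secondary difference: your step~3 via Kahane convexity is not what the paper does, and it is not clear it can be made to work at criticality, where the relevant convergence is after a diverging renormalisation and Kahane's inequality only gives one-sided comparisons of convex/concave functionals. The paper instead repeats the step-2 machinery for $M_\gep$: it computes $\bbE\bigl[\sqrt{\pi t_\gep/2}\,M^{(q,r)}_\gep(E)\mid\cF_s\bigr]$ directly and shows it converges in $L^2$ to (a slight variant of) $D^{(q,r)}_s(E)$ (Lemma~\ref{lecondit2}), then controls the orthogonal remainder by the same second-moment estimate \eqref{tek2}. Mollifier-independence then falls out because the limit is identified with the intrinsically defined $D_\infty$.
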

\begin{rem}
When $\mu(E)=E$ it is immediate that $M_{\gep}(E)=0$ for every $\gep\in(0,1)$ and hence \eqref{inproba} also holds in that case with $M'(E)=0$.
\end{rem}

\begin{rem}\label{comment}
The assumption \eqref{iladeuxstar} may seem at first restrictive, but it has been shown in 
\cite{junnila2019} that it is \textit{locally} satisfied as soon as  $L$ is sufficiently regular. The fact allows to extend the result to all 
sufficiently regular kernels defined on an arbitrary domain in $\bbR^d$. This covers all practial applications (including all variants of the  two dimensional Gaussian Free Field) We refer to \cite[Appendix C]{critix} for more detailed statements and proofs (both of which remain valid in our context).
\end{rem}

\subsection{The martingale decomposition of $X$} \label{martindecoco}

To prove Theorem \ref{mainres}, we rely on a standard tool introduced in  \cite{zbMATH03960673}:
A martingale decomposition of the field $X$ which based on the integral decomposition $K$ provided the star-scale assumption.
Given $K$ a kernel on $\bbR^d$ with an almost star-scale invariant part, 
we define $(X_{t}(x))_{x\in \bbR^d, t\ge 0}$ to be a centered Gaussian field with covariance given by 
(using the shorthand notation $a\wedge b:=\min(a,b)$)
\begin{equation}\label{kstxy}
 \bbE[X_t(x)X_s(y)]= K_{s\wedge t}(x,y).
\end{equation}
Since  $(s,t,x,y)\mapsto K_{s\wedge t}(x,y)$ is H\"older continuous, 
the field admits a modification which is continuous in both $t$ and $x$.
We let  
$\mathcal F_t:= \sigma\left( (X_s(x))_{ x\in \bbR^d ,s\in [0,t]}\right)$
denote the natural filtration associated with $X_{\cdot}(\cdot)$.
Note that the process $X$ indexed by $C_c(\bbR^d)$ and defined by
`
\begin{equation}\label{defofX}
\langle X,\rho\rangle= \lim_{t\to \infty} \int_{\bbR^d} X_t(x) \rho(x)\dd x,
\end{equation}
is a centered Gaussian field, so that $X_t$ indeed is an approximation sequence for a $\log$-correlated field with covariance $K$.
We define the measure     $M_{t}$ by setting for $f\in B_b$ (recall \eqref{measurable})  
 \begin{equation}
    M_{t}(f):=\int_{\bbR^d} f(x)  e^{\sqrt{2d} X_{t}(x)- d K_t(x)}\mu( \dd x),
 \end{equation}
By independence of the increments of $X$ (see Section \ref{considerations}), $(M_t(f))$ is an $(\cF_t)$-martingale. 
We also consider the  \textit{derivative martingale} $D_t$, defined by 
\begin{equation}
 D_t(f)= \int_{\bbR^d}f(x)\left( \sqrt{2d}K_t(x)-X_t \right)  e^{\sqrt{2d} X_{t}(x)- d K_t(x)} \mu(\dd x).
\end{equation}
If we define $M^{\alpha}_t$ for $\alpha\ge 0$ like in \eqref{defalpha}, then we have $D_t(f):=- \partial_{\alpha}     M^{\alpha}_{t}(f)|_{\alpha=\sqrt{2d}}$ (hence the name derivative martingale). 
The first step to prove Theorem \ref{mainres} is to establish the almost-sure convergence of $D_t$. 
We define 
\begin{equation}
 \bar D_\infty(f):=\limsup_{t\to \infty} D_{t}(f).
\end{equation}
 \begin{theorem}\label{derivmartin}
Under the same assumptions as Theorem \ref{mainres}, for any fixed $E\in \mathcal B_b$ satisfying $\mu(E)>0$, we have almost-surely
 \begin{equation}
\bar  D_\infty(E)= 
 \lim_{t\to \infty} D_{t}(E)\in(0,\infty).
 \end{equation}
 and $\bbE[\bar D_{\infty}(E)]=\infty$. Furthermore
 there exists a measure $D_{\infty}$ such that almost surely, $D_t$ converges weakly to $D_{\infty}$. For every $f\in  B_b$, $\bbP[ \bar D_\infty(f)=   D_\infty(f)]=1.$
The measure $D_{\infty}$ is atomless and has the same topological support as $\mu$.
 \end{theorem}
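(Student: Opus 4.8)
\emph{Overall strategy and set-up.} I would follow the classical scheme for the critical derivative martingale --- barrier truncation, removal of the barrier, and a second-moment argument for non-degeneracy --- the only genuinely new ingredient being the estimate that encodes the spread-out hypothesis~\eqref{clazomp}. Fix $E\in\mathcal B_b$ with $\mu(E)>0$; the passage to a random measure is done at the very end. Write $Y_t(x):=\sqrt{2d}K_t(x)-X_t(x)$, so that $D_t(E)=\int_E Y_t(x)\,e^{\sqrt{2d}X_t(x)-dK_t(x)}\mu(\dd x)$. Since $X_\cdot(x)$ has independent increments with $\bbE[X_t(x)^2]=K_0(x)+t$, for each fixed $x$ both $e^{\sqrt{2d}X_t(x)-dK_t(x)}$ and $Y_t(x)e^{\sqrt{2d}X_t(x)-dK_t(x)}$ (the latter $=-\partial_\alpha e^{\alpha X_t(x)-\frac{\alpha^2}2K_t(x)}\big|_{\alpha=\sqrt{2d}}$) are continuous $(\mathcal F_t)$-martingales.

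\emph{Truncated martingale and removal of the barrier.} For $\beta\ge0$ put $\tau_\beta(x):=\inf\{s\ge0:\ Y_s(x)\le-\beta\}$ and set
\begin{align*}
 D_t^\beta(E)&:=\int_E (Y_t(x)+\beta)\,e^{\sqrt{2d}X_t(x)-dK_t(x)}\,\ind_{\{\tau_\beta(x)>t\}}\,\mu(\dd x),\\
 M_t^\beta(E)&:=\int_E e^{\sqrt{2d}X_t(x)-dK_t(x)}\,\ind_{\{\tau_\beta(x)>t\}}\,\mu(\dd x).
\end{align*}
By continuity of $Y_\cdot(x)$, optional stopping of $Y_t(x)e^{\cdots}+\beta e^{\cdots}$ at $\tau_\beta(x)$ makes the integrand of $D_t^\beta(E)$ a nonnegative martingale for each $x$ (the boundary term vanishes since $Y_{\tau_\beta(x)}(x)=-\beta$), so by Fubini $D_t^\beta(E)$ is a nonnegative $(\mathcal F_t)$-martingale with $\bbE[D_t^\beta(E)]=\int_E\bbE[(Y_0(x)+\beta)_+]\mu(\dd x)$, finite and constant in $t$; hence $D_t^\beta(E)\to D_\infty^\beta(E)\in[0,\infty)$ a.s., and a pathwise inequality gives $D_t^{\beta'}(E)\ge D_t^\beta(E)+(\beta'-\beta)M_t^\beta(E)$ for $\beta'>\beta$, so $\beta\mapsto D_\infty^\beta(E)$ is nondecreasing. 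I would also use the standard fact that the critical martingale satisfies $M_t(E)\to0$ a.s. The key observation is then that, almost surely, no point of $E$ ever reaches a fixed level: $\beta^\ast(E):=\sup_{x\in E,\,s\ge0}\bigl(X_s(x)-\sqrt{2d}K_s(x)\bigr)<\infty$ a.s., which reduces to $X_0$ being a.s.\ bounded on $\overline E$ (continuity) and $\sup_{x\in E,\,s\ge0}\bigl(X_s(x)-X_0(x)-\sqrt{2d}s\bigr)<\infty$ a.s. --- i.e.\ the maximum of $X_s$ over $E$ eventually staying below the critical line $\sqrt{2d}s$ --- which I would get from the usual first-moment/Borell--TIS bound $\max_{x\in E}X_s(x)\le\sqrt{2d}s-c\log s+O(1)$, a union bound over $s\in\bbN$, and continuity on unit intervals. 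On $\{\beta>\beta^\ast(E)\}$ one then has $\tau_\beta(x)=\infty$ for all $x\in E$, so $D_t^\beta(E)=D_t(E)+\beta M_t(E)$, and consequently $D_t(E)=D_t^\beta(E)-\beta M_t(E)$ converges a.s.\ to the finite limit $D_\infty^\beta(E)$. Thus $D_t(E)$ converges a.s.\ to a finite limit which equals $\bar D_\infty(E)$, and $D_\infty^\beta(E)$ is a.s.\ eventually constant in $\beta$, with common value $\bar D_\infty(E)\in[0,\infty)$.

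\emph{Non-degeneracy and infinite mean --- the main obstacle.} What remains --- showing $\bar D_\infty(E)>0$ a.s.\ and $\bbE[\bar D_\infty(E)]=\infty$ --- is where~\eqref{clazomp} and the $3$-Bessel lower envelope $\rho$ are used in an essential way, and I expect it to be the hard part. The plan is to work with a variant $\widehat D_t^\beta(E)$ of the truncated martingale built from a \emph{moving} barrier of the form $\{Y_s(x)\le-\beta-\rho(s)\}$ --- tuned so that it sits at the lower-envelope scale of a $3$-Bessel process --- and to prove a second-moment bound, uniform in $t$,
\[
 \bbE\bigl[(\widehat D_t^\beta(E))^2\bigr]\le C(\beta)\Bigl(\mu(E)^2+\int_{E\times E}\frac{\mu(\dd x)\,\mu(\dd y)\,\ind_{\{|x-y|\le1\}}}{|x-y|^{d}\,e^{\rho(\log(1/|x-y|))}}\Bigr),
\]
the second term being finite precisely by~\eqref{clazomp}. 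Mechanism: under the rooted change of measure $\dd\bbQ_{x,y}\propto e^{\sqrt{2d}(X_t(x)+X_t(y))-d(K_t(x)+K_t(y))}$ the paths $s\mapsto Y_s(x)$ and $s\mapsto Y_s(y)$ coincide and carry a drift $-\sqrt{2d}$ up to the decorrelation time $T_{xy}=\log\frac1{|x-y|}$ and are independent driftless Brownian motions afterwards; the covariance gain is $e^{2dK_t(x,y)}\asymp|x-y|^{-2d}$, Brownian survival below the level costs $\asymp|x-y|^{d}$, and the moving barrier --- through the Dvoretzky--Erd\"os criterion~\eqref{besselcond} for $\rho$ --- converts the remaining factor $|x-y|^{-d}$ into the summable weight $|x-y|^{-d}e^{-\rho(\log(1/|x-y|))}$, the post-$T_{xy}$ part contributing only a bounded factor. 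Given this, Cauchy--Schwarz together with $\bbE[\widehat D_\infty^\beta(E)]\asymp\beta\mu(E)$ gives $\bbP(\widehat D_\infty^\beta(E)>0)\ge(\bbE\widehat D_\infty^\beta(E))^2/\bbE[(\widehat D_\infty^\beta(E))^2]\to1$ as $\beta\to\infty$; comparing $\widehat D_\infty^\beta(E)$ with $\bar D_\infty(E)$ on an event of probability $\to1$ (again using $\beta^\ast(E)<\infty$ and $M_t(E)\to0$) forces $\bar D_\infty(E)>0$ a.s. The same $L^2$ bound yields uniform integrability of $(D_t^\beta(E))_t$, so $\bbE[D_\infty^\beta(E)]=\bbE[D_0^\beta(E)]\ge\beta\mu(E)$, and monotone convergence in $\beta$ then gives $\bbE[\bar D_\infty(E)]=\infty$. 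Making this estimate close exactly onto~\eqref{clazomp} --- in particular the correct tuning of the moving barrier to the Bessel envelope --- is the technical heart of the argument.

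\emph{From a set function to a measure.} Finally, $E\mapsto\bar D_\infty(E)$ is a.s.\ nonnegative and additive on disjoint sets of $\mathcal B_b$; using $\bbE[D_\infty^\beta(B)]\le\beta\mu(B)$ (so $\bar D_\infty(B)$ is small in probability when $\mu(B)$ is, on $\{\beta>\beta^\ast\}$) and the fact that~\eqref{clazomp} forces $\mu$ to be atomless, one obtains a.s.\ continuity from above along shrinking sets, and a monotone-class/Carath\'eodory argument produces a locally finite random measure $D_\infty$ with $D_\infty(E)=\bar D_\infty(E)$ a.s.\ for each fixed $E\in\mathcal B_b$; extending from indicators to $B_b$ by linearity and monotone approximation gives $\bbP[\bar D_\infty(f)=D_\infty(f)]=1$. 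Weak convergence $D_t\to D_\infty$ a.s.\ follows from the a.s.\ convergence $D_t(f)\to D_\infty(f)$ along a countable convergence-determining family in $C_c(\bbR^d)$, plus tightness. Atomlessness follows from $\bbE[D_\infty^\beta(B(x,r))]\le\beta\mu(B(x,r))\to0$ as $r\to0$ together with a covering argument; and the topological support of $D_\infty$ a.s.\ coincides with that of $\mu$: $\mu(B)=0\Rightarrow D_t(B)=0$ gives one inclusion, while applying the non-degeneracy above to each ball of a countable basis with positive $\mu$-measure gives the other.
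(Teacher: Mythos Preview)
Your strategy is the right one and matches the paper's, but there is one substantive gap and a few smaller issues.

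\textbf{The gap: from $L^2$ for $\widehat D^\beta$ to UI of $D^\beta$.} You assert that ``the same $L^2$ bound yields uniform integrability of $(D_t^\beta(E))_t$'', but the $L^2$ bound is for the moving-barrier process $\widehat D^\beta_t$, and $D^\beta_t$ itself is \emph{not} bounded in $L^2$ --- that is precisely why the moving barrier was introduced. The $L^1$ gap $\bbE[D^\beta_t-\widehat D^\beta_t]$ is (after Cameron--Martin and Doob--McKean) proportional to $\mu(E)$ times the probability that a $3$-Bessel process started near $\beta$ ever dips below $\rho(\cdot)$; for fixed $\beta$ this does \emph{not} vanish as $t\to\infty$, and sending $\beta\to\infty$ changes which martingale you are claiming is UI. Since your argument for $\bbE[\bar D_\infty(E)]=\infty$ runs through UI of $D^\beta_t$, this is not a side issue. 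The paper's device is a \emph{two-parameter} truncation: replace $\rho$ by $\rho_r(s):=\rho(s+r)-\rho(r)$ so that (i) for each fixed $r$ the resulting process $D^{(q,r)}_t$ is $L^2$-bounded (the computation you sketch goes through with $\rho_r$ in place of $\rho$), and (ii) as $r\to\infty$, $\rho_r\to 0$ and $\bbE[D^{(q)}_t-D^{(q,r)}_t]\to 0$ \emph{uniformly in $t$}, because $\bQ_q[\exists s\ge0:\ \beta_s\le\rho_r(s)]\to 0$ by the Dvoretzky--Erd\H os criterion~\eqref{besselcond}. Uniform integrability of $D^{(q)}_t$ then follows from an elementary approximation principle (if each $Y^{(r)}_n$ is UI and $\limsup_n\bbE|Y_n-Y^{(r)}_n|\to 0$ as $r\to\infty$, then $Y_n$ is UI). This second parameter $r$ is the paper's main technical novelty, and the point at which your single-parameter scheme stalls.

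\textbf{Smaller points.} (a) Your moving barrier has the wrong sign: $\{Y_s\le-\beta-\rho(s)\}$ is \emph{less} restrictive than $\{Y_s\le-\beta\}$ and cannot improve second moments; you want $\{Y_s\le-\beta+\rho(s)\}$, which in the $\bar X$ variable is $\{\bar X_s\ge\sqrt{2d}s+q-\rho(s)\}$. Your narrative shows you have the correct mechanism in mind, so this is probably a slip. (b) Your atomlessness argument via $\bbE[D^\beta_\infty(B(x,r))]\le\beta\mu(B(x,r))$ is insufficient: it controls each fixed ball in $L^1$, but atomlessness is a supremum over uncountably many points and the number of covering balls grows. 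The paper instead uses the second-moment bound (which decays like $e^{-c\rho(\log(1/\Diam E))}$ in the diameter) together with Borel--Cantelli over dyadic cubes. (c) For positivity the paper shortcuts your Paley--Zygmund route: $\{D_\infty(E)>0\}$ lies in the tail $\sigma$-algebra $\bigcap_s\sigma((X_u-X_s)_{u\ge s})$, hence has probability $0$ or $1$, and $\bbE[D_\infty(E)]=\infty$ rules out $0$. This avoids the delicate constant-tracking your approach would require.
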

\noindent We also prove that $M_t$, when appropriately rescaled, converges to the same limit.

 \begin{theorem}\label{critmartin}
For any fixed $E\in \mathcal B_b$ we have the following convergence in probability, 
 \begin{equation}
  \lim_{t\to \infty} \sqrt{ \frac{\pi t}{2}}M_{t}(E)=
\bar D_{\infty}(E).
 \end{equation}
 Furthermore  $\sqrt{ \frac{\pi t}{2}}M_{t}$ converges weakly in probability to $D_{\infty}$.
\end{theorem}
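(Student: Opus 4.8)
\medskip
\noindent\emph{Proof strategy.}
The plan is to deduce Theorem~\ref{critmartin} from the almost sure convergence of the derivative martingale (Theorem~\ref{derivmartin}) via a Seneta--Heyde type renormalization. We first reduce to the one-set statement: for every fixed $E\in\mathcal B_b$ with $\mu(E)>0$ we aim to prove $\sqrt{\pi t/2}\,M_t(E)\to \bar D_\infty(E)$ in probability (when $\mu(E)=0$ we have $M_t(E)\equiv 0$ and there is nothing to prove). Granting this, the weak convergence in probability of the random measures $\sqrt{\pi t/2}\,M_t$ towards $D_\infty$ is obtained in the standard way: one applies the one-set statement to a countable family $(E_i)$ whose finite unions generate the weak topology (or to $f$ ranging over a countable dense subset of $C_c(\bbR^d)$, each such integral being controlled by a finite combination of the $M_t(E_i)$), and upgrades to full weak convergence using the metrizability and separability of the weak topology together with the uniform bound $\bbE[M_t(E)]=\mu(E)$, which provides the required tightness.

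\medskip
Fix $E$ with $\mu(E)>0$ and a lower-envelope $\rho$ of the $3$-Bessel process for which \eqref{clazomp} holds. For $\beta>0$ introduce the barrier event $\mathcal A^\beta_t(x):=\{\,R_s(x)\ge\rho(s)-\beta\ \text{for all}\ s\in[0,t]\,\}$, where $R_s(x):=\sqrt{2d}K_s(x)-X_s(x)$, and the truncated quantities
\begin{equation*}
M^\beta_t(E):=\int_E \ind_{\mathcal A^\beta_t(x)}\,e^{\sqrt{2d}X_t(x)-dK_t(x)}\,\mu(\dd x),\qquad D^\beta_t(E):=\int_E \ind_{\mathcal A^\beta_t(x)}R_t(x)\,e^{\sqrt{2d}X_t(x)-dK_t(x)}\,\mu(\dd x).
\end{equation*}
As in the proof of Theorem~\ref{derivmartin} one has $D^\beta_t(E)\to\bar D^\beta_\infty(E)$ almost surely and $\bar D^\beta_\infty(E)\uparrow\bar D_\infty(E)$ as $\beta\to\infty$; the role of the $3$-Bessel envelope is that under the change of measure associated with $M_t(\dd x)$ (resp.\ with $D_t$) the trajectory $R_\cdot(x)$ is, up to the bounded contribution of the H\"older part $K_0$, a Brownian motion (resp.\ a Brownian motion conditioned to stay positive), and by the Dvoretzky--Erd\"os test \eqref{dvorerdos} the latter eventually dominates $\rho$, so that almost every point carrying mass for $D_\infty$ satisfies $\mathcal A^\beta_\infty(x)$ for $\beta$ large. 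The link with $M_t$ comes from conditioning at a fixed time $t_0$ and letting $t\to\infty$: tilting the density $e^{\sqrt{2d}X_t(x)-dK_t(x)}$ turns $R_\cdot(x)$, given $\mathcal F_{t_0}$, into a Brownian motion issued from $R_{t_0}(x)$, and the classical estimate $\bbP(\text{a Brownian motion started at }r\text{ stays above a slowly growing barrier on }[0,u])\sim\sqrt{2/\pi}\,r/\sqrt u$ --- where $\int_1^\infty\rho(v)v^{-3/2}\dd v<\infty$ guarantees that replacing the flat barrier by the $\rho$-barrier perturbs the estimate only at lower order, and from which the constant $\sqrt{\pi/2}$ originates --- yields that $\sqrt{\pi t/2}\,\bbE[M^\beta_t(E)\mid\mathcal F_{t_0}]$ converges as $t\to\infty$ to a quantity $G^\beta_{t_0}(E)$ which in turn tends to $\bar D^\beta_\infty(E)$ as $t_0\to\infty$.

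\medskip
To upgrade this to convergence in probability one needs a matching control of the \emph{second} moment of the truncated object: $\sup_t\bbE[(\sqrt t\,M^\beta_t(E))^2]<\infty$ and, more precisely, the conditional variance of $\sqrt{\pi t/2}\,M^\beta_t(E)$ given $\mathcal F_{t_0}$ must vanish as first $t\to\infty$ and then $t_0\to\infty$. This is the technical heart of the proof and the only place where hypothesis \eqref{clazomp} is genuinely used. Expanding the square and tilting by $M_t(\dd x)M_t(\dd y)$ produces the factor $e^{2dK_t(x,y)}$, of order $|x-y|^{-2d}$ for $|x-y|\ge e^{-t}$, multiplied by the joint probability that the two trajectories $R_\cdot(x)$, $R_\cdot(y)$ --- which essentially coincide until the branching time $s_{x,y}\approx\log\frac1{|x-y|}$ and decorrelate afterwards --- both stay above $\rho(\cdot)-\beta$; since the barrier constrains the common trajectory to remain low, a Girsanov computation shows that this joint probability carries a factor of order $e^{-ds_{x,y}}$, cancelling one of the two powers of $|x-y|^{-d}$, together with a residual contribution of order $e^{-\rho(s_{x,y})}/\bigl(s_{x,y}^{3/2}(t-s_{x,y})\bigr)$ coming from the survival of the Bessel-like excursions past the branching time. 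Altogether the diagonal singularity is brought down from $|x-y|^{-2d}$ to $|x-y|^{-d}e^{-\rho(\log 1/|x-y|)}$ (up to a harmless logarithmic factor), so that $\bbE[(\sqrt t\,M^\beta_t(E))^2]$ is bounded, uniformly in $t$, by a constant times the left-hand side of \eqref{clazomp}, the contribution of pairs with $|x-y|<e^{-t/2}$ moreover tending to $0$ as $t\to\infty$ by the same bound. This is precisely why the hypothesis must involve a lower-envelope of the $3$-Bessel process: two nearby points can both survive the barrier only if the common excursion beats $\rho$ at the branching scale. Combining the first- and second-moment estimates gives $\sqrt{\pi t/2}\,M^\beta_t(E)\to\bar D^\beta_\infty(E)$ in probability for every fixed $\beta$, and since $M_t\ge M^\beta_t$ and $\bar D^\beta_\infty(E)\uparrow\bar D_\infty(E)$ we obtain $\liminf_{t\to\infty}\sqrt{\pi t/2}\,M_t(E)\ge\bar D_\infty(E)$ in probability.

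\medskip
The matching upper bound amounts to showing that the overshoot is negligible, i.e.\ that $\lim_{\beta\to\infty}\limsup_{t\to\infty}\bbP\bigl(\sqrt t\,M_t\bigl((\mathcal A^\beta_t)^c\cap E\bigr)>\delta\bigr)=0$ for every $\delta>0$. This cannot be done in $L^1$, since $\bbE[\sqrt t\,M_t(E)]=\sqrt t\,\mu(E)\to\infty$ (consistently with $\bbE[\bar D_\infty(E)]=\infty$). Instead one decomposes $(\mathcal A^\beta_t(x))^c$ according to the last time $\sigma\le t$ at which $R_\sigma(x)\le\rho(\sigma)-\beta$, applies the Markov property of the field at time $\sigma$, and estimates each resulting contribution by a first-moment bound in which, the barrier having already been exhausted, the survival probability over the remaining interval decays fast enough to beat $\sqrt t$; summing over a discretization of $\sigma$ and invoking \eqref{clazomp} once more for the off-diagonal pairs yields the claim. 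This estimate, together with the truncated second-moment bound of the previous paragraph, is the main obstacle of the proof. Putting the lower and upper bounds together gives $\sqrt{\pi t/2}\,M_t(E)\to\bar D_\infty(E)=D_\infty(E)$ in probability, and the weak statement follows as explained in the first paragraph.
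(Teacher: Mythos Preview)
Your overall architecture (truncate, compute first and second moments of the truncated object via Cameron--Martin and the two-point barrier estimate, then remove the truncation) matches the paper's, and your second-moment computation is essentially Proposition~\ref{lateknik}. However, your passage from the truncated limit back to $M_t$ differs from the paper's and contains a gap.

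The paper uses \emph{two} truncation levels: a flat barrier $A^{(q)}_t(x)=\{\bar X_s(x)<\sqrt{2d}s+q\}$ and a curved one $A^{(q,r)}_t(x)=\{\bar X_s(x)<\sqrt{2d}s-\rho_r(s)+q\}$. The point of the flat barrier is Proposition~\ref{listov}(iv): on the event $\cA^{(q)}_R$ (which has probability $\to 1$ as $q\to\infty$) one has $M^{(q)}_t(E)=M_t(E)$ \emph{identically} for all $t$. So no ``upper bound / overshoot'' argument is needed: once $\sqrt{\pi t/2}\,M^{(q)}_t(E)\to \bar D^{(q)}_\infty(E)$ in $L^1$ is proved, the convergence of $\sqrt{\pi t/2}\,M_t(E)$ follows immediately on $\cA^{(q)}_R$. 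The second (curved) barrier is used only to get $L^2$ bounds, and the $L^1$ gap between the two truncations is controlled by \eqref{rlimit}, i.e.\ by the $3$-Bessel lower-envelope property via Lemma~\ref{labesse}(iv)--(vi).

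Your single truncation $\mathcal A^\beta_t(x)=\{R_s(x)\ge\rho(s)-\beta\}$ already contains the curved barrier. Consequently there is \emph{no} event of high probability on which $M^\beta_t=M_t$ for all $x\in E$ simultaneously (indeed $\sup_{x\in E}(\bar X_t(x)-\sqrt{2d}t)$ is typically of order $-\tfrac{3}{2\sqrt{2d}}\log t$, not $-\rho(t)$). You therefore genuinely need your step~6, and the sketch given there does not work as stated: decomposing $(\mathcal A^\beta_t(x))^c$ by the last crossing time $\sigma$ and taking a first moment yields, after Cameron--Martin, a contribution of order $\bbP_q[\sigma\in\dd s]$ with \emph{no} further barrier constraint on $[s,t]$, so the post-$\sigma$ survival probability does not provide the factor $t^{-1/2}$ you need. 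In the paper this problem simply never arises because the overshoot control is replaced by the exact identity $M_t=M^{(q)}_t$ on $\cA^{(q)}_R$.

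In short: your first- and second-moment analyses are right, but you should separate the two roles of the truncation. Introduce first a flat barrier at level $q$ (so that $M^{(q)}_t=M_t$ on an event of probability $\to 1$), and only then a second curved barrier (your $\beta$, the paper's $r$) to gain $L^2$ integrability; bridge the two via an $L^1$ estimate of the type \eqref{rlimit}. This removes your step~6 entirely.
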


\begin{rem} 
 A final observation completing the above results, is that, if $(X_{\gep}(\cdot))_{\gep\in (0,1)}$ is constructed jointly with $(X_t(\cdot))_{t\ge 0}$ on a common probability space (using \eqref{defofX} and \eqref{xmolly}), then the limiting measure $M'$ from Theorem \ref{mainres} coincides with $D_{\infty}$. 
In the remainder of the paper we always consider that  $X_{\gep}$ lives in a probability space that contains a martingale approximation $X_t$. Since the question of the convergence of $M_{\gep}(\cdot)$ solely depends on the law of $(X_{\gep}(x))_{\gep\in(0,1),x\in \bbR^d}$, 
this entails no loss of generality.
 \end{rem}

%
%
%
%

\subsection{Degeneracy of the critical GMC for some singular measure}\label{degens}

To complement Theorem \ref{mainres}, we provide a result in the opposite direction to illustrate the fact 
that the assumption we have made concerning $\mu$ is \textit{optimal} in the sense that we can find measures $\mu$ that barely fail to satisfy \eqref{clazomp} and for which the critical GMC is degenerate. Our example take the form of measure supported by a Cantor set. We restrict to the case $d=1$ for simplicity but the example can be generalized to higher dimension in a straightforward manner.

\subsubsection*{Construction of $\mu$} We let $\vartheta: \bbR_+\to \bbR_+$ be a concave increasing function
such that 
\begin{equation}
 \int^{\infty}_1 u^{-3/2}\vartheta(u)\dd u=\infty.
\end{equation}
We make the extra assumption (mostly for simplicity) that $\rho$ can be written in the form $\vartheta(u)= u^{1/2}L(u)$ where $L$ is a slowly varying function.
Examples include $u^{1/2}(\log u)^{\zeta}$ with $\zeta>-1$ or $u^{1/2}(\log u)^{-1}(\log \log u)^{\zeta}$ with $\zeta>-1$ (with appropriated modification to define the function for small values of $u$ and make it concave).

\medskip

We set $a_n:=1-e^{\vartheta(n)-\vartheta(n-1)}$.
We consider $\mu$ to be the uniform probability on the Cantor set $\gO$ obtained by deleting the middle $a_1$-fraction of the segment $[0,1]$ and  iterating the process (using $a_n$ instead of $a_1$ for the $n$-th step) on all the segments obtained in the preceeding steps. 

\medskip

More precisely, given $n\ge 1$ and $(\alpha_i)_{i=1}^n\in (0,1/2)^n$, we consider $A(\alpha_1,\dots,\alpha_n)$ to be the closed subset of $[0,1]$ 
reccursively defined in the following manner.
\begin{equation*}\begin{split}
 A(\alpha)&:=\left[0,\tfrac{1-\alpha}{2}\right]\cup\left[\tfrac{1+\alpha}{2},1\right],\\
 A(\alpha_1,\dots,\alpha_n)&:=\tfrac{1-\alpha_1}{2}A(\alpha_2,\dots,\alpha_n)\cup\left( \tfrac{1-\alpha_1}{2}A(\alpha_2,\dots,\alpha_n)+ \tfrac{1+\alpha_1}{2}\right),
\end{split}\end{equation*}
where $a A+b:=\{ ax+b \ : \ x\in A\}$. 
We consider the set 
$$\gO:=\bigcap_{n\ge 1} A(a_1,\dots,a_n)$$
and $\mu$ to be the uniform probability on $\gO$ (obtained by taking the limit of the uniform probability on  $A(a_1,\dots,a_n)$).
The following indicates that by taking $\vartheta$ in such a way that   $\int^{\infty}_1 u^{-3/2}\vartheta(u)\dd u$ diverges slowly we can make $\mu$ arbitrarily   close to  satisfying
\eqref{clazomp}.
\begin{lemma}\label{topzzzz}
 For the above construction we have, for any $\alpha>(\log 2)^{-1/2}$
 \begin{equation}
  \int_{\gO\times \gO} \frac{\mu(\dd x) \mu(\dd y)}{ e^{-\alpha \vartheta(\log |x-y|) }}<\infty.
 \end{equation}
\end{lemma}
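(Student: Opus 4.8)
The plan is to control the integral in the statement---which I read, consistently with \eqref{clazomp} for $d=1$ and $\rho=\alpha\vartheta$, as $I:=\int_{\gO\times\gO}\tfrac{\mu(\dd x)\,\mu(\dd y)}{|x-y|\,e^{\alpha\vartheta(\log(1/|x-y|))}}$ (legitimate since $\gO\subset[0,1]$ forces $\log(1/|x-y|)\ge 0$)---by a dyadic decomposition in $|x-y|$, the only nontrivial input being the multifractal behaviour of $\mu$. First I would record the geometry of the construction: since $\ell_n/\ell_{n-1}=\tfrac{1-a_n}{2}=\tfrac12 e^{-(\vartheta(n)-\vartheta(n-1))}$, all $2^n$ intervals composing $A(a_1,\dots,a_n)$ share the common length $\ell_n$, and a telescoping product gives $\ell_n=2^{-n}e^{-(\vartheta(n)-\vartheta(0))}$, while $\mu$ assigns mass $2^{-n}$ to each of them. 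From this I extract a ball estimate: if $2r<\ell_m$ then an interval of length $2r$ meets at most two of the pairwise disjoint length-$\ell_m$ generation-$m$ intervals (three of them would span more than $\ell_m$), so $\mu(B(x,r))\le 2\cdot 2^{-m}$; taking $m=m(r)$ maximal with $\ell_m>2r$, the bounds $\ell_{m+1}\le 2r$ and $m\le\log_2(1/r)$ together with the monotonicity of $\vartheta$ yield, for every $x\in\gO$ and every small $r>0$,
\[
\mu\big(B(x,r)\big)\;\le\; C\,r\,e^{\vartheta(\log_2(1/r))}.
\]

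Next I would split $I$ over the dyadic shells $S_k:=\{(x,y):2^{-(k+1)}<|x-y|\le 2^{-k}\}$, $k\ge 0$. On $S_k$ one has $|x-y|^{-1}<2^{k+1}$ and, $\vartheta$ being increasing, $e^{\alpha\vartheta(\log(1/|x-y|))}\ge e^{\alpha\vartheta(k\log 2)}$; integrating first in $y$ over $B(x,2^{-k})$, then in $x$ (recall $\mu(\gO)=1$), and inserting the ball estimate,
\[
I\;\le\; C_0+\sum_{k\ge 2}2^{k+1}\,e^{-\alpha\vartheta(k\log 2)}\,\sup_{x\in\gO}\mu\big(B(x,2^{-k})\big)\;\le\; C_0+C_1\sum_{k\ge 2}e^{\,\vartheta(k)-\alpha\vartheta(k\log 2)},
\]
where $C_0$ absorbs the two shells $k\in\{0,1\}$ on which $\mu(B(x,\cdot))\le 1$.

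Finally, the hypotheses on $\vartheta$ and $\alpha$ make this series converge, and this is where the threshold appears. Writing $\vartheta(u)=u^{1/2}L(u)$ with $L$ slowly varying, the uniform convergence theorem gives $\vartheta(k\log 2)/\vartheta(k)=(\log 2)^{1/2}\,L(k\log 2)/L(k)\to(\log 2)^{1/2}$; since $\alpha>(\log 2)^{-1/2}$, i.e. $\alpha(\log 2)^{1/2}>1$, there are $\delta>0$ and $k_0$ with $\alpha\vartheta(k\log 2)\ge(1+\delta)\vartheta(k)$ for all $k\ge k_0$, so the general term is at most $e^{-\delta\vartheta(k)}$. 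Since $L(k)=k^{o(1)}$ forces $\vartheta(k)\ge k^{1/3}$ for large $k$, the series $\sum_k e^{-\delta\vartheta(k)}$ converges, hence $I<\infty$.

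The step I expect to be the main obstacle is the ball estimate $\mu(B(x,r))\le C\,r\,e^{\vartheta(\log_2(1/r))}$: one must check that a small interval still meets only $O(1)$ intervals of the relevant generation even though the deleted middle gaps, of size $a_n\ell_{n-1}\asymp(\vartheta(n)-\vartheta(n-1))\,\ell_{n-1}$, become arbitrarily thin relative to $\ell_{n-1}$, and one must keep track of the discrepancy between $\log_2(1/r)$ and the generation index $m(r)$, which is of order $\vartheta(\log_2(1/r))$. It is precisely the mismatch between the argument $\log(1/|x-y|)\sim k\log 2$ seen by the integrand and the argument $\sim k$ seen by $\mu(B(x,\cdot))$, combined with $\vartheta(k\log 2)\sim(\log 2)^{1/2}\vartheta(k)$, that produces the constant $(\log 2)^{-1/2}$; everything else reduces to the elementary fact that $\sum_k e^{-c\sqrt{k}}<\infty$.
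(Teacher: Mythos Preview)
Your argument is correct. The route differs from the paper's: rather than extracting a ball estimate and then decomposing dyadically in $|x-y|$, the paper decomposes $\gO\times\gO$ (off the diagonal) directly along the Cantor tree into the sibling-pair rectangles $(I_{n,2i-1}\times I_{n,2i})\cup(I_{n,2i}\times I_{n,2i-1})$, on each of which $|x-y|$ lies between the gap width $a_n d_{n-1}$ and the parent length $d_{n-1}$ while the $\mu^{\otimes 2}$-mass is exactly $2^{-2n}$. Summing over $n$ and $i$ produces $\sum_n a_n^{-1}e^{\vartheta(n)-\alpha\vartheta(|\log d_n|)}$; the extra factor $a_n^{-1}\asymp(\vartheta(n)-\vartheta(n-1))^{-1}=n^{1/2+o(1)}$ is harmless, and since $|\log d_n|=n\log 2+\vartheta(n)\sim n\log 2$, both proofs land on the same regular-variation step $\vartheta(n\log 2)\sim(\log 2)^{1/2}\vartheta(n)$ and the same tail series $\sum_n e^{-\delta\vartheta(n)}$. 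Your ball-estimate approach is the textbook energy-integral technique and would transfer unchanged to any measure obeying such a local mass bound; the paper's sibling-pair decomposition is a little more direct here because the Cantor construction already provides a partition of $\gO^2$ matched to the natural distance scales, at the cost of the nuisance factor $a_n^{-1}$ that your argument avoids.
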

\noindent The following result,  coupled with Lemma \ref{topzzzz} illustrates that our criterion \eqref{clazomp}, is close to optimal. 
\begin{proposition}\label{degeneracy}
For the measure $\mu$ defined above, we have
\begin{equation}
 \lim_{t\to \infty} D_t(\gO)=\lim_{t\to \infty}\sqrt{t} M_t(\gO)= \lim_{\gep \to 0}\sqrt{\log(1/\gep)} M_{\gep}(\gO)=0.
\end{equation}

\end{proposition}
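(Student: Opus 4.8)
The plan is to show that the Cantor measure $\mu$ concentrates too much mass at small scales, so that the critical GMC degenerates. The heuristic is the following: for a $\log$-correlated field, the natural ``thick point'' structure at criticality survives only when the reference measure does not itself sit on an exponentially small set relative to the $\sqrt{2d}$-threshold; the construction makes $a_n = 1-e^{\vartheta(n)-\vartheta(n-1)}$ tuned exactly so that $\mu(I)$ for a generation-$n$ interval $I$ of length $\sim e^{-cn}$ behaves like $2^{-n}$ while $|I|^{d} e^{-\rho(\log 1/|I|)}$ is, because of the divergence $\int^\infty u^{-3/2}\vartheta(u)\dd u=\infty$, \emph{smaller} than $\mu(I)$ along the relevant scales --- i.e.\ \eqref{clazomp} fails in a quantitative way governed by the Dvoretzky--Erd\H{o}s criterion. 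Because the three limits ($D_t(\gO)$, $\sqrt{t}M_t(\gO)$, $\sqrt{\log(1/\gep)}M_\gep(\gO)$) are all dominated (up to the universal constants from Theorems \ref{derivmartin}, \ref{critmartin}, \ref{mainres}) by the behavior of $M_t(\gO)$, it suffices to prove $\lim_{t\to\infty}\sqrt{t}\,M_t(\gO)=0$, and by Fatou/martingale arguments it is in fact enough to show that $M_t(\gO)\to 0$ a.s., equivalently that the a.s.\ limit of the (nonnegative) martingale $M_t(\gO)$ is $0$.

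The key steps, in order, are as follows. First, I would reduce everything to the martingale approximation: since $D_t(\gO)\ge 0$ and is (by Theorem \ref{derivmartin}, applied with the opposite conclusion in mind) the limit of $-\partial_\alpha M^\alpha_t(\gO)$, and since the statements of Theorems \ref{derivmartin}--\ref{mainres} give that $\sqrt{t}M_t(\gO)$, $\sqrt{\log(1/\gep)}M_\gep(\gO)$ and $D_t(\gO)$ share a common limit \emph{whenever the GMC is non-degenerate}, it is enough to rule that case out; concretely I would show $D_\infty(\gO)=0$ a.s., and then the other two limits are $0$ by the comparison/sandwich already built into the proofs of those theorems (or re-derive the one-sided bounds directly). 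Second --- the analytic heart --- I would estimate the structure of $\gO$ and $\mu$: writing $b_n := \prod_{k\le n}\tfrac{1-a_k}{2}$ for the length of a generation-$n$ interval, one computes $\log(1/b_n) = n\log 2 - \sum_{k\le n}\log(1-a_k) = n\log2 + \vartheta(n) - \vartheta(0)$ (telescoping), while each generation-$n$ interval carries $\mu$-mass $2^{-n}$. Thus on scale $b_n$, the ratio $\mu(I)/(|I|\,e^{-\rho(\log 1/|I|)})$ blows up precisely when $\vartheta$ dominates $\rho$ in the integrability sense, which is the hypothesis $\int^\infty u^{-3/2}\vartheta(u)\dd u=\infty$. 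Third, I would run a second-moment or truncated-first-moment argument on the martingale $M_t(\gO)$: along $t=t_n := \log(1/b_n)$, decompose $\gO$ into its $2^n$ generation-$n$ pieces, use near-independence of $X_{t_n}$ across pieces separated by more than $b_n$ (the star-scale kernel $Q_s$ has range $e^{-s}$), and show that $\bbE[\min(M_{t_n}(\gO),1)]\to 0$, or that $M_{t_n}(\gO)\to0$ in probability; combined with the martingale convergence $M_t(\gO)\to M_\infty(\gO)$ a.s., this forces $M_\infty(\gO)=0$. The precise device I expect to use is the one underlying the critical-case degeneracy on the Lebesgue support when $\alpha>\sqrt{2d}$ or the analogous ``too-thin measure'' computation: Girsanov/tilting by $e^{\sqrt{2d}X_{t_n}(x)-dt_n}$ turns $\bbE[M_{t_n}(\gO)]=\mu(\gO)=1$ into a statement about a Brownian path $\beta_s = X_{t_n}(x) - \sqrt{2d}(t_n-s)$ hitting level $0$, and the extra $\mu$-mass penalty $\sim e^{-\rho}$ against $\vartheta$ makes the relevant Bessel-type path fail to stay positive with the required probability --- this is exactly where the Dvoretzky--Erd\H{o}s dichotomy enters with the wrong sign.

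More explicitly for the tilting step: under the measure $\widehat{\bbP}_x$ obtained by reweighting by $e^{\sqrt{2d}X_{t_n}(x)-dt_n}/\mu(\dd x)$-normalization, the process $s\mapsto \sqrt{2d}K_s(x) - X_s(x)$ becomes (after the time-change $t\mapsto t'$ implicit in \eqref{defqt}, which is a bounded perturbation) a Brownian motion with drift, and $D_{t_n}(\gO) = \int \mu(\dd x)\,(\sqrt{2d}K_{t_n}(x)-X_{t_n}(x))e^{\sqrt{2d}X_{t_n}(x)-dK_{t_n}(x)}$ is, after tilting, governed by $\widehat{\bbE}_x[\,\cdot\,; \text{path stays in a window}]$ where the window size is dictated by the local mass $\mu(B(x,e^{-t_n}))\asymp 2^{-n}e^{o(n)}$. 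Because $\log(1/b_n)-n\log2 = \vartheta(n)+O(1)$ and $\int^\infty u^{-3/2}\vartheta(u)\dd u=\infty$, the required ballot-type event has probability $o(1/\sqrt{t_n})$ uniformly, which kills $\sqrt{t_n}M_{t_n}(\gO)$ and $D_{t_n}(\gO)$ simultaneously; interpolating between the dyadic times $t_n$ is routine since $M_t(\gO)$ is a martingale and $t_{n+1}-t_n$ is bounded.

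I expect the main obstacle to be making the near-independence / second-moment step genuinely rigorous on the Cantor set: the generation-$n$ intervals are separated by gaps of size $\asymp a_n b_{n-1}$, which shrinks, so the field $X_{t_n}$ is \emph{not} exactly independent across pieces and one must carefully track the residual correlation (it is $O(1)$, coming from $K_0$ and the tail of $\bar K$ beyond scale $b_n$) and show it does not spoil the estimate --- this is the same technical annoyance present in \cite{critix} and can presumably be handled by the same conditioning/comparison lemmas, but it is where the real work lies. A secondary, more bookkeeping-level obstacle is controlling the slowly-varying factor $L$ in $\vartheta(u)=u^{1/2}L(u)$ so that the divergence of $\int u^{-3/2}\vartheta(u)\dd u$ translates cleanly into the failure of the ballot estimate at the $1/\sqrt{t}$ precision required; Karamata-type estimates should suffice.
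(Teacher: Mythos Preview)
Your proposal has a genuine gap and takes a route that will not close. Two concrete problems:

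\textbf{The reduction is vacuous.} You write that it suffices to show $M_t(\gO)\to 0$ a.s. But this is already known for \emph{any} locally finite $\mu$ (Proposition~\ref{listov}(iii)): the critical martingale always vanishes. The content of Proposition~\ref{degeneracy} is the rate, namely that $\sqrt{t}\,M_t(\gO)\to 0$ and $D_t(\gO)\to 0$; knowing $M_\infty(\gO)=0$ says nothing about this. Relatedly, you cannot invoke Theorems~\ref{derivmartin}--\ref{mainres} to link the three limits: those theorems assume \eqref{clazomp}, which $\mu$ does \emph{not} satisfy here, so the argument is circular.

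\textbf{The missing idea.} The paper does not use a second-moment or near-independence argument, and the obstacle you anticipate (correlations across Cantor pieces) is not the issue. The key observation you are missing is a \emph{sharpened supremum bound on the field restricted to $\gO$}: because at scale $e^{-t_n}$ (with $t_n=n\log 2+\vartheta(n)$) the set $\gO$ is covered by only $2^n$ pieces rather than $\sim e^{t_n}$, a union bound plus Borel--Cantelli gives
\[
\sup_{x\in\gO}\bigl(\bar X_t(x)-\sqrt{2}t\bigr)\le -c\,\vartheta(t)+q
\]
for some $c>0$ and a random finite $q$ (Proposition~\ref{smalltek}). Once you restrict to the event $\mathfrak B_q$ where this holds (probability $\to 1$ as $q\to\infty$), a straight \emph{first}-moment computation via Cameron--Martin reduces each of $\bbE[|D_t|\ind_{\mathfrak B_q}]$, $\sqrt{t}\,\bbE[M_t\ind_{\mathfrak B_q}]$, $\sqrt{\log(1/\gep)}\,\bbE[M_\gep\ind_{\mathfrak B_q}]$ to quantities of the form $\bQ_q[\forall s\le t,\ \beta_s\ge c\,\vartheta(s)]$. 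Since $\int^\infty u^{-3/2}\vartheta(u)\,\dd u=\infty$, the function $c\,\vartheta$ is \emph{not} a lower envelope for the $3$-Bessel process, so this probability tends to $0$ --- which is exactly the Dvoretzky--Erd\H{o}s dichotomy entering with the right sign. Your tilting paragraph is groping toward this, but without the prior restriction to $\mathfrak B_q$ there is no event whose $L^1$-mass you can control, and the ``window size dictated by local mass'' picture does not materialize into an estimate.
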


\subsection{Extension and applications}\label{extense}

\subsubsection*{Relaxing the assumption on $\mu$}
We want to justify here that our results extends to the case where $\mu$ satisfies the weaker condition given in the abstract.
This weaker condition is the following, there exists an increasing function $\rho:\bbR_+\to \bbR_+$  satisfying \eqref{besselcond}  such that
\begin{equation}\label{root}
\mu\left( \left\{ x \ : \  \int_{B(x,1)} \frac{\mu(\dd y)}{|x-y|^{d} e^{-\rho(\log \frac{1}{|x-y|})}}<\infty\right\}\right)=0.
\end{equation}
The reason why the extension is possible is that if $\mu$ satisfies \eqref{root} then we can find a sequence of mesures $\mu_n$ which satisfy \eqref{clazomp} and converges to $\mu$ is sufficiently strong sense.
We simply set 
\begin{equation}\label{defmun}\begin{split}
 A_n&:=\left\{x\in \bbR^d \ : \ \int_{B(x,1)} \frac{\mu(\dd y)}{|x-y|^{d} e^{-\rho(\log \frac{1}{|x-y|})}}\le n  \right\},\\
\mu_n(B)&:= \mu(A_n\cap B).
 \end{split}
\end{equation}
It follows from the definition and \eqref{root} that $\mu_n$ satisfy \eqref{clazomp} and that  $\mu_n(B)\uparrow \mu(B)$ for every Borel set $B$.
The critical GMC with respect to $\mu$ is then obtained by taking the limit of 
$M'(\cdot ,\mu_n)$ when $n\to \infty$ (we provide  justifications for this claim in Section \ref{relax}).

\subsubsection*{Slightly beyond $\log$-correlated field}

Let us present here a variant of the star-scale invariant construction, which yields a kernel which does not quite satisfy \eqref{fourme}.
Given $\kappa$ as in \eqref{starscaleconstru} we set 
$$J(x,y):=\int^{\infty}_0  (1+\alpha(t))\kappa(e^t(x-y)) \dd t  $$
where $\alpha:\bbR_+\to \bbR_+$ is a decreasing function that tends to zero.
We set $\rho(t)=\int^t_0\alpha(u)\dd u$.
It is a simple computation to check that 
\begin{equation}
 J(x,y):= \log \left(\frac{1}{|x-y|}\right)+ \rho\left( \log \frac{1}{|x-y|}\right)+ L(x,y).
\end{equation}
where $L(x,y)$ is a continuous function on $\bbR^{2d}$.
We define $t'(t)$ by the equation 
\begin{equation}
 \int^{t'}_0  (1+\alpha(u)) \dd u=t. 
\end{equation}
We have $t'\le t$. We set (in analogy with $Q_t$ and $K_t$ respectively) 
\begin{equation}\begin{split}
R_t(x,y)&:=  (1+\alpha(t'))\kappa(e^{t'}(x-y)),\\
J_t(x,y)&:=\int^t_0 Q_u(x,y)\dd u=  \int^{t'}_0  (1+\alpha(u))\kappa(e^{u}(x-y)) \dd u.
\end{split}\end{equation}
and consider a the Gaussian  field $Y_t$ with covariance given by 
$$\bbE[Y_t(x)Y_s(y)]=J_{s\wedge t}(x,y),$$
and let $Y_{\gep}$ denote the continuum field defined by
$$ Y_{\gep}(x):=\lim_{t\to \infty} \int_{\bbR^d} \theta_{\gep}(y-x)Y_{\gep}(y)\dd y,$$
where the convergence holds in $L^2$. 
In this setup we can define the corresponding critical martingale and derivative martingale and mollified GMC by setting for $f\in B_b$
\begin{equation}\begin{split}
    N_{\gep}(f)&:=\int_{\bbR^d} f(x)  e^{\sqrt{2d} Y_{t}(x)- d t}\dd x,\\
    N_{t}(f)&:=\int_{\bbR^d} f(x)  e^{\sqrt{2d} Y_{t}(x)- d t}\dd x,\\
    F_{t}(f)&:=\int_{\bbR^d} f(x) (\sqrt{2d}t-Y_t(x)) e^{\sqrt{2d} Y_{t}(x)- d t}\dd x.
 \end{split}\end{equation}

\medskip

\noindent Adapting the argument developed in the proof of Theorems 
\ref{mainres}-\ref{derivmartin}-\ref{critmartin} and that of the proof of Proposition \ref{degeneracy}, we can show the following
\begin{theorem}\label{outro}
 If If $\int^{\infty}_1 \rho(u) u^{-3/2}  \dd u<\infty$ then  $\sqrt{ \frac{\pi \log (1/\gep)}{2}}N_\gep$,
 $\sqrt{ \frac{\pi t}{2}}N_t$ and $F_t$ all converge weakly in probability to the same  limiting measure $N'$ which is atomless, with dense support and satisfies for every set of positive Borel measure
 $$ \bbE\left[ F_\infty(E)\right]=\infty.$$
 
 \medskip
 
 \noindent If $\int^{\infty}_1 \rho(u) u^{-3/2}  \dd u=\infty$ then for every $E\in \mathcal B_b$

  $$\lim_{t\to \infty} F_t(E)=\lim_{t\to \infty}\sqrt{t} N_t(E)=\lim_{\gep \to 0}\sqrt{ \frac{\pi \log (1/\gep)}{2}}N_\gep(E)=0.$$

\end{theorem}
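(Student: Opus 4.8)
The plan is to adapt, with essentially no new ideas, the proofs of Theorems~\ref{mainres}, \ref{derivmartin} and \ref{critmartin} (for the first assertion) and of Proposition~\ref{degeneracy} (for the second). For the convergent regime the most transparent organisation is via the following splitting. With $\kappa$ as in Section~\ref{starscaleconstru} and $\alpha$ as in the definition of $J$, introduce a pure almost star-scale invariant field $\check X$ with covariance $\int_0^{\cdot}\kappa(e^s(x-y))\,\dd s$ and an independent field $W$ with covariance $\int_0^{\cdot}\alpha(s)\kappa(e^s(x-y))\,\dd s$; then the martingale approximation of $Y$ may be realised as $Y_t=\check X_{t'}+W_{t'}$ with $t'=t'(t)$ as in the statement, so that $\bbE[Y_t(x)^2]=t'+\rho(t')=t$ and
\begin{equation*}
 e^{\sqrt{2d}Y_t(x)-dt}=e^{\sqrt{2d}\check X_{t'}(x)-dt'}\,e^{\sqrt{2d}W_{t'}(x)-d\rho(t')}.
\end{equation*}
The covariance of $W$ equals $\rho(\log(1/|x-y|))$ up to a continuous term, so it grows like $o(\log)$, and $\nu_{t'}(\dd x):=e^{\sqrt{2d}W_{t'}(x)-d\rho(t')}\,\dd x$ is an $L^2$-bounded martingale (this uses only $\alpha\to 0$, not \eqref{besselcond}) converging to an a.s.\ finite, atomless random measure $\nu$ with $\bbE[\nu(\dd x)\nu(\dd y)]\asymp e^{2d\rho(\log(1/|x-y|))}\,\dd x\,\dd y$ near the diagonal. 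One then has the exact identities $N_t(f)=\int f(x)\,e^{\sqrt{2d}\check X_{t'}(x)-dt'}\nu_{t'}(\dd x)$ and $F_t(f)=D^{\check X}_{t'}(f,\nu_{t'})+\int f(x)\bigl(\sqrt{2d}\rho(t')-W_{t'}(x)\bigr)e^{\sqrt{2d}\check X_{t'}(x)-dt'}\nu_{t'}(\dd x)$, where $D^{\check X}_{t'}(\cdot,\mu)$ is the derivative martingale of $\check X$ with reference measure $\mu$, and the analogous identity holds for $N_\gep$ built from the mollified field $Y_\gep$.

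First I would treat the convergent regime $\int_1^\infty\rho(u)u^{-3/2}\,\dd u<\infty$. Conditionally on $W$ the measure $\nu$ is deterministic and, almost surely, satisfies the spread-out hypothesis \eqref{clazomp} with respect to an envelope which is itself a lower-envelope of the $3$-Bessel process: taking $\tilde\rho(u):=2d\,\rho(u)+2\log(u+2)$, the expectation of $\int_{K\times K}\nu(\dd x)\nu(\dd y)\,|x-y|^{-d}e^{-\tilde\rho(\log(1/|x-y|))}$ reduces, on integrating out the radial variable, to a constant multiple of $\int_0^\infty e^{2d\rho(\sigma)-\tilde\rho(\sigma)}\,\dd\sigma=\int_0^\infty(\sigma+2)^{-2}\,\dd\sigma<\infty$, whereas $\int_1^\infty\tilde\rho(u)u^{-3/2}\,\dd u<\infty$ follows from the hypothesis on $\rho$ together with the integrability of $\log(u+2)u^{-3/2}$, so $\tilde\rho$ satisfies \eqref{besselcond}; since $\bbE[W_{t'}(x)W_{t'}(y)]=\int_0^{t'}\alpha(s)\kappa(e^s(x-y))\,\dd s$ increases with $t'$, the same computation bounds the corresponding integral for $\nu_{t'}$ uniformly in $t'$. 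As $\check X$ is independent of $W$ and has an almost star-scale invariant covariance, Theorems~\ref{mainres}--\ref{critmartin} apply conditionally to $\check X$ with reference measure $\nu$; combining this with the continuity, after rescaling and in probability, of the critical GMC and of its derivative martingale under a perturbation of the reference measure — the mechanism already exploited in Sections~\ref{extense}--\ref{relax} to pass from \eqref{clazomp} to \eqref{root}, applied here to the deterministic approximations $\nu_{t'}\to\nu$ — and with $t\sim t'$, one concludes that $\sqrt{\pi t/2}\,N_t$, $\sqrt{\pi\log(1/\gep)/2}\,N_\gep$ and $F_t$ all converge to the common limit $N':=\bar D^{\check X}_\infty(\cdot,\nu)$. (For $F_t$ one also checks that the second integral in its identity above is negligible; this follows from $\rho(t')=o(\sqrt{t'})$, itself a consequence of $\int_1^\infty\rho(u)u^{-3/2}\,\dd u<\infty$, together with the tightness of $\sqrt{t}\,N_t$.) The announced properties of $N'$ — atomless, full support, and $\bbE[F_\infty(E)]=\infty$ for every $E$ of positive Lebesgue measure — are inherited from the corresponding statements of Theorems~\ref{mainres} and \ref{derivmartin} for the pair $(\check X,\nu)$, using that $\nu$ has full support and $\nu(E)>0$ almost surely when $|E|>0$.

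For the degenerate regime $\int_1^\infty\rho(u)u^{-3/2}\,\dd u=\infty$ I would instead adapt the barrier argument of Proposition~\ref{degeneracy} directly to the field $Y$. Here the point is that two points at distance $e^{-s}$ decouple only at variance-time $s+\rho(s)$ rather than $s$, so that in the truncated second-moment and first-moment computations controlling the convergence the effective barrier that the trajectory $s\mapsto\sqrt{2d}s-Y_s(x)$ — which, conditioned to stay positive, is a $3$-Bessel process up to a deterministic drift — has to clear is shifted by $\rho$; the failure half of the Dvoretzky–Erd\"os test \eqref{besselcond} asserts that, when $\int_1^\infty\rho(u)u^{-3/2}\,\dd u=\infty$, a $3$-Bessel process almost surely fails to stay above $\rho$ eventually, so there is no surviving mass and the three limits all vanish. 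Concretely one shows $\bbE[\sqrt{t}\,N_t(E)\,;\,\cG_t]\to 0$, where $\cG_t$ is the event that the truncated trajectory clears $\rho$ up to time~$t$, that $\bbP(\cG_t)\to 0$ at the matching rate, and that the complementary contribution is negligible; since $F_t(E)$, $\sqrt{t}\,N_t(E)$ and $\sqrt{\log(1/\gep)}\,N_\gep(E)$ are all dominated by the same barrier quantities, each converges to $0$. This is lighter than Proposition~\ref{degeneracy}, the Lebesgue reference measure sparing the combinatorics of the Cantor construction.

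The hard part will be the perturbation lemma needed in the convergent regime to pass from $\nu_{t'}$ to $\nu$ — that the rescaled critical martingale, derivative martingale and mollified measure of a fixed star-scale field depend continuously, in probability, on a reference measure satisfying \eqref{clazomp} with uniform bounds. This is the same obstacle that Sections~\ref{extense}--\ref{relax} already deal with, so it should be available in the form required. If instead one prefers to avoid the splitting and argue directly, the corresponding difficulty is to push the replacement of $K_t$ by $J_t$ through every barrier estimate and every truncated second-moment estimate in the proofs of Theorems~\ref{mainres}--\ref{critmartin}, and to check that the extra covariance term $\rho(\log(1/|x-y|))$ enters at exactly the place, and with exactly the weight, at which the envelope in \eqref{clazomp} entered there, so that the dichotomy of \eqref{besselcond} is reproduced verbatim. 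Once that correspondence is set up, the remainder is a line-by-line repetition of the arguments already carried out.
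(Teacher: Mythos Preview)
Your fallback route — carrying the truncation scheme of Sections~\ref{caseofdt}--\ref{caseofmgep} directly over to the covariance $J_t$ and redoing the second-moment estimate — is exactly the paper's argument. The paper redefines $\bar A^{(q,r)}_t(x)$ with barrier $\sqrt{2d}(s-\rho_r(s))+q$, and the only substantive change is in the two-point bound: \eqref{tek1} becomes \eqref{teknn} with $\bar u(t,x,y)=[\log(1/|x-y|)+\rho(\log(1/|x-y|))]\wedge t$ in place of $u$, which is precisely the decoupling delay you identify. For the degenerate half the paper proves the sharper maximum bound $\sup_{x,t}\bigl(Y_t(x)-\sqrt{2d}(t-\rho(t))\bigr)<\infty$ (the signs in \eqref{toctic}--\eqref{toctic2} as printed are reversed) and then repeats the first-moment computation of Section~\ref{lastwalkz}; your sketch is in the same spirit.

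Your primary route, the splitting $Y_t=\check X_{t'}+W_{t'}$ with $\check X\perp W$, is conceptually appealing but has a real gap at the perturbation step. The mechanism of Section~\ref{relax} is specific to approximation by \emph{restrictions} $\mu_n=\mu\!\upharpoonright_{A_n}$: positivity of $\mu-\mu_n$ is what produces the uniform-in-$t$ bound $\sqrt{t}\,\bbE[M^{(q)}_t(E,\mu-\mu_n)]\le C\,\mu(E\cap A_n^{\cc})\to 0$. Here $\nu-\nu_{t'}$ is a signed measure with no order relation, and when $\rho(\infty)=\infty$ the densities $e^{\sqrt{2d}W_{t'}-d\rho(t')}$ do not converge in $L^1(\dd x)$, so the total variation of $\nu-\nu_{t'}$ need not vanish. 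Worse, the approximant is tied to the martingale time: you need $\sqrt{t'}\,M^{\check X}_{t'}(f,\nu-\nu_{t'})\to 0$, where $\nu-\nu_{t'}$ lives on scales $\le e^{-t'}$, exactly where $e^{\sqrt{2d}\check X_{t'}-dt'}$ is most singular. Controlling this forces a two-point estimate for the joint field $Y$, i.e.\ the direct approach. The same applies to the ``second integral'' in your $F_t$-identity: by Cameron--Martin for $W$, under $\nu_{t'}$ the field $W_{t'}$ has mean $\sqrt{2d}\rho(t')$, so that term is a centered $W$-fluctuation integrated against the critical $\check X$-chaos at its finest scale, and $\rho(t')=o(\sqrt{t'})$ alone does not make it negligible.
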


\subsubsection*{Critical Liouville Brownian Motion}

The Liouville Brownian Motion, introduced in \cite{lbm}, is formally defined as the isotropic the diffusion process in $\bbR^2$ equiped with the random metric $e^{\alpha X(z)}d z^2$ where $\alpha$ is a real parameter and $(X(z))_{z\in \bbR^2}$ is a two dimensional Gaussian free-field
(that is a specific $\log$-correlated field whose covariance is given by the two dimensional Green kernel).

\medskip

The process  is built as a time change of the standard two dimensional Brownian motion, and an important step of the construction is to be able to define the GMC $e^{\alpha X(z)}\mu_T(\dd z)$ where $\mu_T$ is the occupation measure of a Brownian Motion (defined by $\mu_T(A)=\int^T_{0} \ind_{\{B_t\in A\}} \dd t $).
When $\alpha<2$, the measure $\mu_T$ satisfies the criterion \eqref{capasouscrit} and the GMC can be defined in the ``standard'' way.

\medskip

In \cite{lioucrit}, the extension of the construction to the case $\alpha=2$ was introduced. In that case the time change $e^{ 2 X(z)}\mu_T(\dd z)$ is a critical GMC. However, the justification of the existence of the GMC in \cite{lioucrit} contains an error. 
The authors prove (see \cite[{Equation (3.20)}]{lioucrit}) that the truncated derivative martingale (the process which we denote as $D^{(q)}_t$ in the present paper see \eqref{trunk1}) is bounded in $L^2$ if the reference measure $\mu$ satisfies
\begin{equation}\label{condx}
\int_{K\times K} \frac{\mu(\dd x)\mu(\dd y)\ind_{\{|x-y|\le 1\}}}{|x-y|^2 (\log\frac 1 {|x-y|}+1)^{3/2}}<\infty 
\end{equation}
 for every compact $K$.
The issue is then that neither $\mu_T$ nor any  of its any restriction of it to a set of positive measure satisfy \eqref{condx}.
The construction presented in the present paper allows to resolve this issue, since for any $p>2$ (see for instance \cite[Lemma 3.2]{lioucrit})
\begin{equation}\label{condx2}
\int_{\bbR^2 \times \bbR^2} \frac{\mu_T(\dd x)\mu_T(\dd y)\ind_{\{|x-y|\le 1\}}}{|x-y|^2 (\log\frac 1 {|x-y|}+1)^{p}}<\infty .
\end{equation}
This obviously implies \eqref{clazomp} for $\rho(u)=u^{1/3}$.

\subsection{Organization of the paper}

\subsubsection*{Outline of the proof}
Let us first underline that the proof of Theorems \ref{mainres}, \ref{derivmartin} and \ref{critmartin} displays a lot of similarity with that presented  in \cite{critix} for the case where $\mu$ is a Lebesgue measure. Although the proof we present in this paper is mostly self contained, since it adds an extra layer of technicality on top of the previously used strategy, the reader may find it useful to first take a look at \cite{critix} to get familiar with the original argument.

\medskip

The starting idea for the proof is to modify the process  $D_t$ by introducing a restriction on the trajectories for  $(\bar X_s(x))_{s\in [0,t]}$ (defined by $\bar X_t(x)=X_t(x)-X_0(x)$)
The important technical novelty brought in this paper is a refinement of the truncation procedure. Instead of asking simply for $\bar X_s(x)$ to stay below $\sqrt{2d}s+q$ (where $q$ is a large constant), we restrict it further and discard the contributions of trajectories that goes above the threshold $\sqrt{2d}s+q- \rho_r(s)$ where $\rho_r$ is a slightly modified version of the function $\rho$ (indexed by an extra parameter $r$).

\medskip

In order to explain how we take advantage of this double truncation procedure in the specific case of $D_t$ (the other cases are similar),
We let $D^{(q)}_t$ denotes the output of the first truncation procedure  and $D^{(q,r)}_t$ that of the second one (see the definitions \eqref{trunk1}-\eqref{trunk2}).
A first step - an idea first implemented in  \cite{MR3262492} - is to observe that with a probability going to $1$ when $q\to \infty$ $D_t(E)$ almost coincides with $D^{(q)}_t(E)$ for every $t$ (more specifically we have $D^{(q)}_t(E)=D_t(E)+qM_t(E)$ on a large probability event  but the second term $qM_t$ does not matter because it can easily be shown to converge to zero). This is because $\sup_{x\in E}\bar X_t(x)$ most likely stays below the threshold we have fixed for every $t\ge 0$  (see Proposition \ref{listov} item (iv) below). This step allows to deduce the convergence of $D_t$ from that of $D^{(q)}_t$  which is a non-negative martingale. What is important then is to check that 
$D^{(q)}_t$ is uniformly integrable (for every $q$) so that the obtained limit  is nonnegenerate.

\medskip

The aim of the second truncation is to help the proof of uniform integrability. We observe that, with our  parametrization, $D^{(q,r)}_t(E)$ and $D^{(q)}_t$ are uniformly close (in $t$) in the $L^1$ sense when $r\to \infty$.
Hence to  prove  the uniform integrability of $D^{(q)}_t(E)$, it is sufficient to show that $D^{(q,r)}_t(E)$ is bounded in $L^2$ for every $r$ (cf. Proposition \ref{metaprop}). Our definition of $D^{(q,r)}_t$ is taken so that it makes the second moment finite while keeping $D^{(q,r)}_t$ close to $D^{(q)}_t$ in the $L^1$ sense. By optimizing this proceedure we obtain the  requirement \eqref{clazomp} for the measure $\mu$.

\subsubsection*{Summary of the content of each section}

The paper is organized as follows:
\begin{itemize}
 \item In Section \ref{prelimlim}, we introduce many technical observations and estimates that we use in our proof. The list of ingredients is slightly larger than the one used in \cite{critix} (where the Lebesgue measure treated). In particular we require a few technical estimates concerning Bessel processes and a criterion to prove uniform integrability via $L^1$ approximation truncation (Proposition \ref{metaprop}). The proofs of the results of this section are presented in Appendix \ref{teklesproofs}.
 \item In Section \ref{caseofdt} we prove the convergence of $D_t$, as well as the property of the limit $D_{\infty}$, that is Theorem \ref{derivmartin},
 \item In Section \ref{caseofmt} we prove the convergence of $M_t$, that is Theorem \ref{critmartin},
 \item In Section \ref{caseofmgep} we prove the convergence of $M_{\gep}$, that is Theorem \ref{mainres}.
 \item In Section \ref{lastwalkz} we prove the degeneracy of the critical GMC on the Cantor set introduced in  Section \ref{degens} that is Proposition \ref{degeneracy}.
 \item In Section \ref{sketch} we sketch the proof of the results discussed in Section \ref{extense}.
\end{itemize}

\medskip

\noindent {\bf Acknowledgements:} The authors is grateful to the authors of \cite{lioucrit} for enlightening discussions, and in particular for letting him know about the existing 
the gap in the proof of the construction of Critical Liouville Brownian Motion. He acknowledges the support of a productivity grant from CNPq and that of a CNE grant from FAPERJ.

 \section{Technical preliminaries}\label{prelimlim}

 For the remainder of the paper (exclusing Section \ref{sketch}), we are going to fix the function $\rho$ which appears in \eqref{clazomp}.
We assume that $\rho$ is concave and that $\rho(0)=0$. This entails no loss of generality since if $\rho$ satisfies \eqref{besselcond}, then there exists a concave function $\bar\rho\ge \rho$ that also satisfies \eqref{besselcond} ($\bar \rho$ can be obtained by taking the convex envelope of the region below the graph of $\rho$). We also assume that $\rho(u)\ge u^{1/4}$ for $u$ sufficiently large. 
 Given $r\ge 0$ we define $\rho_r:\bbR_+\to \bbR_+$
  \begin{equation}\label{rhor}
 \rho_r(u)=\rho(u+r)-\rho(u).
 \end{equation}
 The concavity assumptions on $\rho$ and \eqref{besselcond} imply that  $\lim_{r\to \infty} \rho_r(u)=0$ and the convergence holds uniformly on compact sets.
 Note that if $\rho$ satisfies \eqref{clazomp}, then it is also the case for $\rho_r$ for any $r\ge 0$.
 
 \subsection{Gaussian and Brownian estimates}\label{gbesti}

Let us recall first a couple of basic results concerning Gaussian processes and Brownian Motion.
Firstly, the Cameron-Martin formula, that indicates how the distribution of a Gaussian field is affected by an exponential tilt.

\begin{proposition}\label{cameronmartinpro}

 Let $(Y(z))_{z\in \cZ}$ be a centered Gaussian field indexed by a set $\cZ$. We let  $H$ denote its covariance and $\bP$ denote its law. 
Given $z_0\in \cZ$ let us define $\tilde \bP_{z_0}$ the probability obtained from $\bP$ after a tilt by $Y(z_0)$ that is
\begin{equation}
 \frac{\dd \tilde \bP_{z_0}}{\dd \bP}:= e^{Y(z_0)- \frac{1}{2} H(z_0,z_0)}
\end{equation}
Under $\tilde \bP_{z_0}$, $Y$ is a Gaussian field with covariance $H$,
and mean  $\tilde \bE_{z_0}[ Y(z)]=H(z,z_0).$
\end{proposition}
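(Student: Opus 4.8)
The statement to prove is the Cameron--Martin formula (Proposition \ref{cameronmartinpro}), so the plan is to verify that the exponential tilt $\frac{\dd \tilde\bP_{z_0}}{\dd\bP} = e^{Y(z_0) - \frac12 H(z_0,z_0)}$ produces a Gaussian field with the claimed mean and covariance. Since a Gaussian field is determined by its finite-dimensional distributions, it suffices to fix an arbitrary finite collection $z_1,\dots,z_n \in \cZ$ and show that under $\tilde\bP_{z_0}$ the vector $(Y(z_1),\dots,Y(z_n))$ is Gaussian with mean vector $(H(z_1,z_0),\dots,H(z_n,z_0))$ and covariance matrix $\big(H(z_i,z_j)\big)_{i,j}$. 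The cleanest way to do this is via the Laplace/Fourier transform: compute $\tilde\bE_{z_0}\big[e^{\sum_{j=1}^n \lambda_j Y(z_j)}\big]$ for real (or imaginary) parameters $\lambda_j$ and match it to the moment generating function of the target Gaussian law.

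First I would write, using the definition of the tilted measure,
\begin{equation*}
\tilde\bE_{z_0}\Big[e^{\sum_{j=1}^n \lambda_j Y(z_j)}\Big] = \bE\Big[e^{\sum_{j=1}^n \lambda_j Y(z_j) + Y(z_0) - \frac12 H(z_0,z_0)}\Big].
\end{equation*}
The exponent is a linear combination of the jointly Gaussian variables $Y(z_0),Y(z_1),\dots,Y(z_n)$, hence a centered Gaussian variable $G := \sum_{j=1}^n \lambda_j Y(z_j) + Y(z_0)$, and by the standard formula $\bE[e^{G}] = e^{\frac12 \Var(G)}$. Expanding the variance gives
\begin{equation*}
\Var(G) = \sum_{i,j=1}^n \lambda_i\lambda_j H(z_i,z_j) + 2\sum_{j=1}^n \lambda_j H(z_j,z_0) + H(z_0,z_0).
\end{equation*}
Substituting back, the $H(z_0,z_0)$ terms cancel against the $-\frac12 H(z_0,z_0)$ in the Radon--Nikodym derivative, leaving
\begin{equation*}
\tilde\bE_{z_0}\Big[e^{\sum_{j=1}^n \lambda_j Y(z_j)}\Big] = \exp\Big(\tfrac12\sum_{i,j=1}^n \lambda_i\lambda_j H(z_i,z_j) + \sum_{j=1}^n \lambda_j H(z_j,z_0)\Big),
\end{equation*}
which is exactly the moment generating function of a Gaussian vector with mean $(H(z_j,z_0))_j$ and covariance $(H(z_i,z_j))_{i,j}$. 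Taking $\lambda_j = 1$ (and checking $\tilde\bP_{z_0}$ is a probability measure, i.e. the $n=0$ case) confirms the normalization. One also reads off from this identity, applied with $n=1$, that $\tilde\bE_{z_0}[Y(z)] = H(z,z_0)$ as asserted.

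This proof is entirely routine; there is no real obstacle. The only point deserving a word of care is the interchange that lets us treat $\frac{\dd\tilde\bP_{z_0}}{\dd\bP}$ as a genuine probability density and compute moments by this formal manipulation --- this is justified because $e^{Y(z_0) - \frac12 H(z_0,z_0)}$ has $\bP$-expectation $1$ (it is the exponential martingale / log-normal normalization) and all the exponential integrals appearing are finite, being expectations of exponentials of Gaussian variables. I would state these integrability facts briefly and then carry out the computation above; no approximation or limiting argument is needed since everything reduces to finite-dimensional Gaussian algebra.
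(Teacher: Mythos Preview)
Your proof is correct and is the standard argument via moment generating functions of finite-dimensional marginals. The paper itself does not prove this proposition: it is stated in Section~\ref{gbesti} as a well-known fact (``Let us recall first a couple of basic results\ldots'') and used as a tool throughout, so there is no paper proof to compare against.
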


We let $(B_t)_{t\ge 0}$ and $(\beta_t)_{t\ge 0}$ denote respectively a standard one dimensional Brownian motion and a $3$-Bessel process, starting from $a\ge 0$ and denote their respective law by $\bP_a$ and $\bQ_a$. We also let $\bQ_{a,t}$ denote the
distribution of a Brownian Motion conditioned to remain positive until time $t$, that is 
\begin{equation}
\bQ_{a,t}:= \bP_a\left[ \ \cdot \ \ |  \ \forall s\in [0,t], B_s\ge 0 \right].
\end{equation}
The probability $\bQ_a$ and $\bP_a$ are related by the Doob-McKean identity (see for instance the introduction of \cite{doobmckean}). For any finite $t> 0$ and positive measurable function $F$, we have
 \begin{equation}\label{dmk}
 \bP_a\left[ \frac{B_t}{a}F( (B_s)_{s\in [0,t]})\ind_{\{\forall s\in [0,t], B_s>0\}}\right]= \bQ_a\left[ F( (B_s)_{s\in [0,t]})\right]
\end{equation}
Finally we let $\bP_{a}[ \ \cdot \  | \ B_t=b]$ denote the distribution of the Brownian bridge of length $t$ starting from $a$ and ending at $b$.
We provide below a couple of useful identities and inequalities concerning these processes (proofs are provided in Appendix \ref{laabesse}).
 
\begin{lemma}\label{labesse}
The following holds 
\begin{itemize}
 \item [(i)] Setting $ \mathfrak g_{t}(a):= \int^{a}_0 e^{-\frac{z^2}{2t}} \dd z,$ we have 
 \begin{equation}
  \bP_a\left[ \sup_{s\in[0,t]} B_s \ge 0 \right]=\sqrt{ \frac{2}{\pi t}}\mathfrak g_{t}(a)\le \sqrt{\frac{2}{\pi t}}a.
 \end{equation}
\item[(ii)] If $ab\ge 0$ then
\begin{equation}\label{bridjoux}
 \bP_{a}[ \forall s\in[0,t], \ B_s\ge 0 \  | \ B_t=b]= \left(1-e^{-\frac{2ab}{t}}\right) \le  1\wedge \frac{2ab}{t}. 
\end{equation}
\item[(iii)] If $a, b> 0$ then,
\begin{equation}
 \bP[\forall t>0, \ B_t\le at+b]=1-e^{-2ab}\le 2ab.
\end{equation}
\item[(iv)]  For all $a\ge 0$
\begin{equation}\label{TCDbess}
 \lim_{r\to \infty}\bQ_a\left[ \forall s\ge 0, \  \beta_s\ge \rho_r(s)\right]=1.
\end{equation}
\item[(v)]
For every $a>0$ there exists $t_0(a)$ such that for  $t\ge t_0(a)$ and every Borel set $A\subset C([0,t],\bbR)$
\begin{equation}
  \bQ_{a,t}\left[  (B_s)_{s\in[0,t]}\in A \right]\le (1+\sqrt{2}) \sqrt{\bQ_{a}\left[  (\beta_s)_{s\in [0,t]}\in A \right]}.
\end{equation}
\item[(vi)]
For any $r\ge 0$, we have
\begin{equation}\label{browntobess}
\lim_{t \to \infty}\bQ_{a,t}\left[ \forall s\in [0,t], B_s\ge \rho_r(s) \right]= \bQ_a\left[ \forall t\ge 0, \beta_t\ge \rho_r(t) \right].
\end{equation}
\end{itemize}

\end{lemma}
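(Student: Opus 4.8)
The six items are essentially independent: (i)--(iii) are classical Brownian identities coming from the reflection principle and optional stopping, while (iv)--(vi) are the substantive statements about the $3$-Bessel process. For (i) one uses the reflection principle: if $\tau:=\inf\{s\ge0:B_s=0\}$, then $\bP_a[\tau\le t]=2\bP_a[B_t<0]$, so the probability in question (that the path does not reach $0$ before time $t$) equals $1-2\bP_a[B_t<0]$, which the change of variable $z=\sqrt t\,u$ turns into $\sqrt{2/(\pi t)}\,\mathfrak g_t(a)$, and $\mathfrak g_t(a)\le a$ is obvious since $e^{-z^2/(2t)}\le1$. For (ii), the transition density of Brownian motion killed at $0$ is $p_t(a,b)-p_t(a,-b)$ ($p_t$ the heat kernel, reflection again), so conditioning on $\{B_t=b\}$ gives $1-p_t(a,-b)/p_t(a,b)=1-e^{-2ab/t}$, and $1-e^{-x}\le1\wedge x$ for $x\ge0$. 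For (iii), apply optional stopping to the martingale $(e^{2aB_t-2a^2t})_{t\ge0}$ at the first hitting time of the line $\{B_t=at+b\}$, using that $B_t-at\to-\infty$ when this time is infinite; this yields $\bP[\exists t>0:B_t>at+b]=e^{-2ab}$, and one takes complements and uses $1-e^{-2ab}\le2ab$.

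For (iv), I would use the two facts about $\rho_r$ recorded after \eqref{rhor}: $\rho_r\to0$ uniformly on compacts as $r\to\infty$, and $\rho_r$ stays below a fixed lower-envelope function of the $3$-Bessel process. Almost surely the Bessel path has $\inf_{s\ge\delta}\beta_s>0$ for every $\delta>0$ (transience) and, being eventually above any fixed lower envelope, satisfies $\beta_s\ge\rho_r(s)$ for all $s\ge T_0$ with $T_0<\infty$ a.s.; hence the event $\{\forall s\ge0:\beta_s\ge\rho_r(s)\}$ can fail only on the compact $[0,T_0]$, where $\rho_r$ is uniformly small once $r$ is large (near $0$ one compares $\beta_s\asymp\sqrt s$ with $\rho_r(s)=O(s)$). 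So a.s.\ the path lies in this event for all $r$ large; the events increase in $r$ to a full-measure set, whence $\bQ_a[\forall s\ge0:\beta_s\ge\rho_r(s)]\to1$ by continuity of probability.

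Items (v)--(vi) both rest on the explicit Radon--Nikodym derivative on $\mathcal F_t$: the Doob--McKean identity \eqref{dmk} gives $\dd\bQ_a/\dd\bP_a=a^{-1}B_t\ind_{\{\forall s\le t:B_s>0\}}$, and (i) gives $\bP_a[\forall s\le t:B_s>0]=p(t,a):=\sqrt{2/(\pi t)}\,\mathfrak g_t(a)$, so $\dd\bQ_{a,t}/\dd\bQ_a=a\,(p(t,a)\beta_t)^{-1}$. For (v), Cauchy--Schwarz gives $\bQ_{a,t}[A]=\tfrac{a}{p(t,a)}\bQ_a[\ind_A\beta_t^{-1}]\le\tfrac{a}{p(t,a)}\sqrt{\bQ_a[A]}\,\sqrt{\bQ_a[\beta_t^{-2}]}$, and a short computation (via $\bQ_a[\beta_t^{-2}]=a^{-1}\bP_a[\beta_t^{-1}\ind_{\{\forall s\le t:B_s>0\}}]$, or the explicit Bessel density) shows $\tfrac{a}{p(t,a)}\sqrt{\bQ_a[\beta_t^{-2}]}\to\sqrt{\pi/2}<1+\sqrt2$ as $t\to\infty$, so the bound holds for $t\ge t_0(a)$. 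For (vi), I would sandwich the quantity. Upper bound: for each fixed $S$, $\bQ_{a,t}$ restricted to $\mathcal F_S$ converges to $\bQ_a$ (the corresponding density $\tfrac{a}{p(t,a)}\bQ_a[\beta_t^{-1}\mid\mathcal F_S]\to1$ a.s., hence in $L^1$ as it has mean $1$), so using $\{\forall s\le t:B_s\ge\rho_r(s)\}\subseteq\{\forall s\le S:B_s\ge\rho_r(s)\}$ one gets $\limsup_t\bQ_{a,t}[\forall s\le t:B_s\ge\rho_r(s)]\le\bQ_a[\forall s\le S:\beta_s\ge\rho_r(s)]$, which decreases to $\bQ_a[\forall s\ge0:\beta_s\ge\rho_r(s)]$ as $S\to\infty$. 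Lower bound: discard $[S,t]$ and bound $\bQ_{a,t}[\exists s\in[S,t]:B_s<\rho_r(s)]$ by the Cauchy--Schwarz estimate above, which is $\lesssim\sqrt{\bQ_a[\exists s\ge S:\beta_s<\rho_r(s)]}$; this tends to $0$ as $S\to\infty$ because $\int^\infty\rho_r(u)u^{-3/2}\dd u<\infty$, so by the Dvoretzky--Erd\"os test $\beta_s\ge\rho_r(s)$ for all large $s$ almost surely.

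The delicate part is (v)--(vi): one must pin down the constant through the asymptotics $\mathfrak g_t(a)\to a$ and $\bQ_a[\beta_t^{-2}]\sim1/t$, and then, for (vi), upgrade the finite-window convergence $\bQ_{a,t}|_{\mathcal F_S}\to\bQ_a|_{\mathcal F_S}$ to a statement about the global constraint $\{\forall s\le t:B_s\ge\rho_r(s)\}$ -- the uniform control over the tail interval $[S,t]$ that this requires is exactly what the integrability hypothesis \eqref{besselcond} supplies.
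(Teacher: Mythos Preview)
Your treatment of (i)--(iv) and (vi) matches the paper's: reflection principle and optional stopping for (i)--(iii); for (iv) a compact/tail split using that $\rho_r\to0$ uniformly on compacts together with the fact that $\rho_r\le\rho_0$ is a lower envelope; for (vi) the same two-step scheme (finite-window convergence $\bQ_{a,t}|_{\cF_S}\to\bQ_a|_{\cF_S}$ in total variation, plus a tail bound on $[S,t]$ supplied by (v)).

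The one genuine difference is (v). The paper does \emph{not} use Cauchy--Schwarz. Instead it splits on the size of the terminal value: from the Doob--McKean relation $\bQ_{a,t}[A_0]=\bQ_a[(a/\beta_t)\ind_{A_0}]/p(t,a)$ (with $p(t,a):=\bP_a[\forall s\le t,\ B_s\ge0]$) it writes, for any $\delta>0$,
\[
\bQ_{a,t}[A]\;\le\;\frac{a}{\delta\sqrt{\pi t}\,p(t,a)}\,\bQ_a[A]\;+\;\bQ_{a,t}\big[B_t\le\delta\sqrt{\pi t}\,\big],
\]
bounds the last probability by $\sqrt2\,\delta$ via the FKG inequality (the event $\{B_t\le\delta\sqrt{\pi t}\}$ being decreasing), uses $p(t,a)\ge a/\sqrt{\pi t}$ for $t\ge t_0(a)$, and then optimises with $\delta=\sqrt{\bQ_a[A]}$ to get the factor $1+\sqrt2$. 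Your Cauchy--Schwarz route is shorter and avoids FKG altogether; since $\bQ_a[\beta_t^{-2}]\sim 1/t$ and $p(t,a)\sim a\sqrt{2/(\pi t)}$, it even gives the sharper asymptotic prefactor $\sqrt{\pi/2}<1+\sqrt2$. The paper's argument is marginally more robust in that it only needs a first-moment tail estimate on $\beta_t^{-1}$ rather than the finiteness of $\bQ_a[\beta_t^{-2}]$, but for the $3$-Bessel process both are immediate, so your approach is a legitimate and arguably cleaner alternative.
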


 \subsection{Simple observations concerning our Gaussian fields}\label{considerations}
By construction the increments of $X_t$ are orthogonal in $L^2$ and hence independent. 
Setting
\begin{equation}\label{barxx}
 X^{(s)}_t(x)= X_{t+s}(x)-X_s(x) \text{ and } \bar X_t(x)= X^{(0)}_t(x),
\end{equation}
the field $(\bar X_{t}(x))_{x\in \bbR^d, t\ge 0}$ has covariance $\bar K_{s\wedge t}(x,y)$ (recall \eqref{kkttt}). 
In particular, since $\bar K_t(x,x)=t$, this implies that for every fixed $x\in \bbR^d$ and $s\ge 0$,
$t\mapsto  \bar X^{(s)}_t(x)$ is a standard Brownian Motion independent of $\mathcal F_s$. Furthermore, recalling \eqref{defqt}, we necessarily have   $t' \in [t,t+\eta_1/\eta_2]$. Our assumption that $\kappa$ is supported on $B(0,1)$ implies that if  $|x-y|\ge e^{-s}$ then  $Q_u(x,y)=0$ for every $u\ge s$ and thus 
  $X^{(s)}_{\cdot}(x)$ and  $X^{(s)}_{\cdot}(y)$ are independent Brownian Motions.
We introduce a third field  $X_{t,\gep}$ which is the mollification of $X_t$. It appears when we consider the  conditional expectation of $M_{\gep}$
\begin{equation}
 X_{t,\gep}(y):= \int_{\bbR^d} X_t(y)\theta_{\gep}(x-y) \dd y= \bbE\left[ X_{\gep} \ | \ \cF_t \right].
\end{equation}
  The increments of the mollified fields also have a finite range dependence for the same reason and $(X_{\gep}-X_{s,\gep})(x)$ is independent of  $(X_{\gep}-X_{s,\gep})(y)$ and of  $X^{(s)}_{\cdot}(y)$ when $|x-y|\ge e^{-s}+2\gep$.
We let $K_{t,\gep}$ and $\bar K_{t,\gep,0}$ denote the covariance of $X_{t,\gep}$  and the cross-covariance with $\bar X_t$, 
\begin{equation}\label{crossover}\begin{split}
 K_{t,\gep}(x,y)&:=\bbE[X_{t,\gep}(x)X_{t,\gep}(y)]= \int_{\bbR^d} K_t(z_1,z_2)\theta_{\gep}(x-z_1 )\theta_{\gep}(x-z_1 ) \dd z_1 \dd z_2,\\ 
 \bar K_{t,\gep,0}(x,y)&:=\bbE[X_{t,\gep}(x)\bar X_{t}(y)]=\bbE[X_{\gep}(x)\bar X_{t}(y)]
 = \int_{\bbR^d} \overline K_t(z,y)\theta_{\gep}(x-z) \dd z.
 \end{split}
\end{equation}
Setting $\log_+ u= \max(\log u, 0)$, there exists a constant $C_R$ which is such that 
  \begin{equation}\label{thelogbound}
  \left| K_{t,\gep}(x,y) - t\wedge \log_+ \frac{1}{|x-y|\vee \gep} \right|\le C_R
  \end{equation}
The same bound are valid for $K_t$, $\bar K_t$ and $\bar K_{t,\gep,0}$, (with $\gep=0$ in the two first cases). For the upper bound on $\bar K_t$ no constant is needed and we have 
\begin{equation}\label{steak}
\bar K_t(x,y)\le t\wedge \log_+ \frac{1}{|x-y|}.
  \end{equation}
  The two bounds \eqref{thelogbound}-\eqref{steak} are easily obtained from the definition \eqref{crossover} and \eqref{defqt}. The reader can  refer to \cite[Appendix A.3]{critix} for a proof.
  
 \subsection{Uniform integrability via $L^1$ approximation  second moment estimates}

The following result is going to be helpful to prove uniform integrability for sequences that are not bounded in $L^2$.

 \begin{proposition}\label{metaprop}
 Consider $(Y_n)_{n\ge 0}$ a collection of positive random variables such that $\sup_{n\ge 0}\bbE[|Y_n|]<\infty$.
 Assume that there exists  
 $Y^{(r)}_n$ a sequence of approximation of $Y_n$, indexed by $r\ge 1$, which satisfies:
\begin{align*}
\mathrm{(A)} \qquad  &  \lim_{r\to \infty}  \limsup_{n\to \infty} \bbE\big[ |Y_n-Y^{(r)}_n|\big]=0\,  ; \\
 \mathrm{(B)}  \qquad  & \text{ For every $r\geq 0$ the sequence $(Y^{(r)}_n)_{n\ge 0}$ is uniformy integrable} 
\end{align*}
Then $(Y_t)_{t\ge 0}$ is uniformly integrable.
 \end{proposition}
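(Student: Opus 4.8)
The plan is to verify directly the standard criterion for uniform integrability, namely that $\lim_{K\to\infty}\sup_{n\ge 0}\bbE\big[Y_n\ind_{\{Y_n>K\}}\big]=0$ (using that $Y_n\ge 0$). Write $C:=\sup_{n\ge 0}\bbE[Y_n]<\infty$ and fix $\gep>0$. The first move is to freeze a good value of the approximation parameter: by hypothesis $\mathrm{(A)}$ there is an $r\ge 1$ with $\limsup_{n\to\infty}\bbE[|Y_n-Y_n^{(r)}|]<\gep$, hence an index $N=N(\gep)$ such that $\bbE[|Y_n-Y_n^{(r)}|]<\gep$ for every $n\ge N$. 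This $r$ stays fixed for the rest of the argument.

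For $n\ge N$ I would then bound $\bbE\big[Y_n\ind_{\{Y_n>K\}}\big]$ by writing $Y_n=(Y_n-Y_n^{(r)})+Y_n^{(r)}$. The first term contributes at most $\bbE[|Y_n-Y_n^{(r)}|]<\gep$. For the second term $\bbE\big[|Y_n^{(r)}|\ind_{\{Y_n>K\}}\big]$, split further according to whether $|Y_n^{(r)}|>L$ or $|Y_n^{(r)}|\le L$: the contribution of the event $\{|Y_n^{(r)}|>L\}$ is at most $\sup_{m}\bbE\big[|Y_m^{(r)}|\ind_{\{|Y_m^{(r)}|>L\}}\big]$, which by $\mathrm{(B)}$ applied to the now-fixed $r$ is $<\gep$ once $L=L(\gep,r)$ is large enough, uniformly in $n$; the contribution of $\{|Y_n^{(r)}|\le L\}$ is at most $L\,\bbP[Y_n>K]\le LC/K$ by Markov's inequality. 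Hence for all $n\ge N$ one gets $\bbE\big[Y_n\ind_{\{Y_n>K\}}\big]\le 2\gep+LC/K$, which is $\le 3\gep$ as soon as $K\ge LC/\gep$.

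It then remains to handle the finitely many indices $n<N$: since $\sup_n\bbE[Y_n]<\infty$ each such $Y_n$ is integrable, so $\bbE\big[Y_n\ind_{\{Y_n>K\}}\big]\to 0$ as $K\to\infty$ for each of them, and enlarging $K$ further if necessary makes these terms $<\gep$ as well. Combining the two regimes gives $\sup_{n\ge 0}\bbE\big[Y_n\ind_{\{Y_n>K\}}\big]\le 3\gep$ for $K$ large, and letting $\gep\downarrow 0$ yields the claim. I do not expect a serious obstacle here: the statement is an abstract measure-theoretic lemma and the proof is a two-parameter $\gep$-chase. The only point requiring genuine care is the order of the quantifiers — one must choose $r$ before sending $n\to\infty$ (so that $\mathrm{(A)}$ can be used), then choose the truncation level $L$ uniformly in $n$ from $\mathrm{(B)}$ for that fixed $r$, then $K$, and only at the very end absorb the initial segment $\{n<N\}$.
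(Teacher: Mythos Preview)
Your proof is correct and follows essentially the same approach as the paper: decompose $Y_n\ind_{\{Y_n>K\}}$ into $(Y_n-Y_n^{(r)})$ plus $Y_n^{(r)}\ind_{\{Y_n>K\}}$, use (A) to fix $r$ and a threshold $N$, use (B) together with the Markov bound $\bbP[Y_n>K]\le C/K$ to control the $Y_n^{(r)}$ piece, and treat the finitely many $n<N$ separately. The only cosmetic difference is that the paper invokes directly the equivalent formulation of uniform integrability ``$\bbP(A_{n,M})\to 0$ uniformly $\Rightarrow \bbE[|Y_n^{(r)}|\ind_{A_{n,M}}]\to 0$'', whereas you unfold that implication by hand via the auxiliary truncation level $L$.
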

\begin{proof}
We include a proof for completeness. 
Given $M>0$ let us set $A_{n,M}:=\{|Y_n|\ge M\}$. We need to show given that $\delta>0$ for $M$ sufficiently large 
\begin{equation}
\forall n \ge 0,  \ \bbE\left[ |Y_n|\ind_{A_{n,M}}\right]\le \delta
\end{equation}
We  have
\begin{equation}
 \bbE\left[ |Y_n|\ind_{A_{n,M}}\right]\le  \bbE\left[ |Y^{(r)}_n|\ind_{A_{n,M}}\right]+  \bbE\left[ |Y^{(r)}_n-Y_n|\right].
\end{equation}
Given $\delta>0$, using $(A)$ taking first $r=r(\delta)$ sufficiently large and then $n\ge n_0(\delta)$ we obtain that the second term is smaller than $\delta/2$.
 On the other hand, since the sequence $(Y_n)_{n\ge 1}$ is bounded in $L^1$ we have $P\left[ A_{n,M}\right] \le  C M^{-1}$. The uniform integrability of $Y^{(r)}_n$ thus implies 
 \begin{equation}
 \lim_{M\to \infty} \sup_{n\ge n_0} \bbE\left[ |Y^{(r)}_n|\ind_{A_{n,M}}\right]=0;
 \end{equation}
 and thus we can find $M_0(\delta,r)$ such that for all $n\ge n_0$
 $$ \bbE\left[ |Y^{(r)}_n|\ind_{A_{n,M_0}}\right]\le \delta/2$$ 
 Next we choose $M_1(\delta, n_0)$ such that 
\begin{equation}
 \forall n\in \lint 0,n_0-1\rint, \quad \bbE\left[ |Y_n|\ind_{A_{n,M}}\right]\le \delta.
\end{equation}
and conclude by setting  $M=\max(M_1,M_2)$.
\end{proof}

 \subsection{Truncation}
 In order to prove that $D_t$, $M_t$ and $M_{\gep}$ converge, we are going to considered \textit{truncated} versions of these processes where
high-values of the field $X_t$ are not taken into account.
Given $q,r\ge 1$ we define the following events
\begin{equation}\begin{split}\label{aqx}
 A^{(q)}_t(x)&:= \{ \forall s\in [0,t],\quad \bar X_s(x)< \sqrt{2d}s+q \},\\
    A^{(q,r)}_t(x)&:= \{ \forall s\in [0,t],\quad \bar X_s(x)< \sqrt{2d}s-\rho_r(s)+q \},\\
 \cA^{(q)}_R&:= \Big\{  \sup_{t>0}\sup_{|x|\le R }\left( \bar X_t(x)- \sqrt{2d} t \right)< q\Big\}=\bigcap_{|x|\le R} \bigcap_{t>0} A^{(q)}_t(x).
\end{split}\end{equation}
Setting $t_{\gep}:=\log (1/\gep)$, we define 

 \begin{equation}\begin{split}\label{trunk1}
  M^{(q)}_t(E)&:= \int_E  e^{\sqrt{2d}X_t(x)-dK_t(x)}\ind_{A^{(q)}_t(x)} \mu(\dd x) =: \int_E  W^{(q)}_t(x) \mu(\dd x),\\
   M^{(q)}_\gep(E)&:= \int_E  e^{\sqrt{2d}X_\gep(x)-dK_\gep(x)}\ind_{A^{(q)}_{t_{\gep}}(x)} \mu(\dd x) =: \int_E  W^{(q)}_{\gep}(x) \mu(\dd x),\\
  D^{(q)}_t(E)&:= \int_E (\sqrt{2d} t+q-\bar X_t(x)) W^{(q)}_{\gep}(x)  \mu(\dd x)=:  \int_E  Z^{(q)}_t(x) \mu(\dd x),
 \end{split}\end{equation}
 and
   \begin{equation}\begin{split}\label{trunk2}
 M^{(q,r)}_t(E)&:= \int_E  e^{\sqrt{2d}X_t(x)-d\bbE[X^2_t]}\ind_{A^{(q,r)}_t(x)}\mu(\dd x) =: \int_E  W^{(q,r)}_t(x) \mu(\dd x), \\  
 M^{(q,r)}_\gep(E)&:= \int_E  e^{\sqrt{2d}X_\gep(x)-d\bbE[X^2_{\gep}]}\ind_{A^{(q,r)}_{t_{\gep}}(x)}\mu(\dd x) =: \int_E  W^{(q,r)}_{\gep}(x) \mu(\dd x),\\
 D^{(q,r)}_t(E)&:= \int_E (\sqrt{2d} t+q-\rho_r(t)-\bar X_t(x)) W^{(q,r)}_t(x)\mu(\dd x)=: \int_E  Z^{(q,r)}_t(x) \mu(\dd x).
  \end{split} 
  \end{equation}
 Finally we set 
 \begin{equation}
  \overline{D}^{(q,r)}_{\infty}(E)=\limsup_{t\to \infty} D^{(q.r)}_t(E) \quad \text{ and }  \quad   \overline{D}^{(q)}_{\infty}(E)=\limsup_{t\to \infty} D^{(q)}_t(E).
 \end{equation}
We gather in a single proposition a collection of important information concerning the above defined processes.
The proofs are given in Appendix \ref{liistov}.

 \begin{proposition}\label{listov}
 The following holds:
 \begin{itemize}
  \item  [(i)] The processes  $(D^{(q,r)}_t)_{t\ge 0}$, $(M^{(q,r)}_t)_{t\ge 0}$ and $(M^{(q)}_t)_{t\ge 0}$ are nonnegative supermartingales for the filtration $(\cF_t)_{t\ge 0}$, $(D^{(q)}_t)_{t\ge 0}$ is a nonnegative martingale.
  \item [(ii)] For any $q\ge 1$, setting $\eta(q,r)= \bQ_q(\forall t\ge 0, \beta_t\ge \rho_r(t))$ we have
  \begin{equation}
   \lim_{t\to \infty}\bbE[D^{(q,r)}_t(E)]=\eta(q,r)\mu(E) 
  \end{equation}
and 
 \begin{equation}\label{rlimit}
 \begin{split}
  \lim_{r\to\infty} \limsup_{t\to \infty} \bbE[D^{(q)}_t(E) -D^{(q,r)}_t(E)]&=0,\\
    \lim_{r\to\infty} \limsup_{t\to \infty} \sqrt{t} \bbE[M^{(q)}_t(E) -M^{(q,r)}_t(E)]&=0,\\
      \lim_{r\to\infty} \limsup_{\gep\to 0} \sqrt{\log (1/\gep)} \bbE[M^{(q)}_{\gep}(E) -M^{(q,r)}_{\gep}(E)]&=0.
 \end{split}\end{equation}
 \item[(iii)] For any $q,r\ge 1$, we have 
 $$\lim_{t\to \infty}M_t=\lim_{t\to \infty} M^{(q)}_t= \lim_{t\to \infty} M^{(q,r)}_t=0.$$
 \item[(iv)] For any given $R>0$,
  $\lim_{q\to \infty}  \bP\left[\cA^{(q)}_R\right]=1.$
  As a consequence, there exists a random $q_0(R)\in \bbN\cup \{\infty\}$ which is almost surely finite and such that for every $q\ge q_0$, $t\ge 0$, $\gep\in(0,1]$ and $E\subset B(0,R)$
 \begin{equation}
  D^{(q)}_t(E)=(D_t+q M_t)(E)   , \quad  M^{(q)}_t(E)= M_t(E) \quad \text{ and } \quad  M^{(q)}_\gep(E)= M_\gep(E).
  \end{equation}
  \item[(v)]Almost surely,  for all $q\ge q_0(R)$ (defined by $(iv)$) and  $E\subset B(0,R)$ we have
  $$ \bar D^{(q)}_{\infty}(E)= \bar D_{\infty}(E).$$
 \end{itemize}

\end{proposition}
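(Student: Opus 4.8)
The proof of every item reduces, through the Cameron--Martin formula (Proposition \ref{cameronmartinpro}) and the independence of the increments of $X_t$ (Section \ref{considerations}), to one-dimensional Brownian/Bessel computations, for which Lemma \ref{labesse} is designed. The dictionary is: for a fixed $x$, reweighting the law of $X$ by $e^{\sqrt{2d}X_t(x)-dK_t(x)}$ turns $u\mapsto\bar X_u(x)$ (for $u\le t$) into $\sqrt{2d}u+B_u$ with $B$ a standard Brownian motion from $0$, and, conditionally on $\cF_s$, reweighting only the increment over $[s,t]$ by $e^{\sqrt{2d}(X_t(x)-X_s(x))-d(t-s)}$ turns $\bar X_{s+u}(x)-\bar X_s(x)$ into $\sqrt{2d}u+B_u$; correspondingly the weights $\sqrt{2d}u+q-\bar X_u(x)$ and $\sqrt{2d}u+q-\rho_r(u)-\bar X_u(x)$ become $q-B_u$ and $q-\rho_r(u)-B_u$, and the events $A^{(q)}_t(x)$, $A^{(q,r)}_t(x)$ of \eqref{aqx} become $\{B_u<q\ \forall u\le t\}$ and $\{B_u<q-\rho_r(u)\ \forall u\le t\}$ (the H\"older part $K_0$ contributes an independent field $X_0$ that cancels in every increment and only perturbs things by a locally bounded amount, so it plays no role). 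For item (i): in $M^{(q)}_t$ and $M^{(q,r)}_t$ the integrand is the martingale $e^{\sqrt{2d}X_t(x)-dK_t(x)}$ times an indicator nonincreasing in $t$, hence a supermartingale, and Fubini applies since the first moments (computed below) are finite. For $D^{(q)}_t$, computing $\bbE[Z^{(q)}_t(x)\mid\cF_s]$ via the increment reweighting gives the value at time $t-s$ of the stopped Brownian martingale $q'-B_{\cdot\wedge\tau}$, with $q'=\sqrt{2d}s+q-\bar X_s(x)$ and $\tau=\inf\{u:B_u\ge q'\}$, and since $(q'-B_{u\wedge\tau})\ind_{\{B_v<q'\,\forall v\le u\}}=q'-B_{u\wedge\tau}$ one gets $\bbE[Z^{(q)}_t(x)\mid\cF_s]=Z^{(q)}_s(x)$; integrating in $x$ shows $D^{(q)}_t$ is a nonnegative martingale with $\bbE[D^{(q)}_t(E)]=q\mu(E)$. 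For $D^{(q,r)}_t$ the same computation produces a stopped supermartingale — the deterministic barrier adds a nonpositive drift $-\rho_r'$ — so $D^{(q,r)}_t$ is a nonnegative supermartingale.

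For item (ii): after the tilt, $\bbE[D^{(q,r)}_t(E)]$ is $\mu(E)$ times a Brownian expectation which, after removing the indicator by optional stopping at the first hitting time of the curve $q-\rho_r(\cdot)$ and letting $t\to\infty$ via the Doob--McKean identity \eqref{dmk} and Lemma \ref{labesse}(vi), converges to the limit $\eta(q,r)\mu(E)$ recorded in (ii). For the displays \eqref{rlimit}: one has $Z^{(q)}_t(x)\ge Z^{(q,r)}_t(x)$ and $W^{(q)}_t(x)\ge W^{(q,r)}_t(x)$ pointwise, so all three differences are nonnegative; writing $\bbE[D^{(q)}_t(E)-D^{(q,r)}_t(E)]=q\mu(E)-\bbE[D^{(q,r)}_t(E)]$ (and similarly, with a $\sqrt t$ or $\sqrt{\log(1/\gep)}$ prefactor, for the $M$'s, using $\sqrt t\,\bbE[M^{(q)}_t(E)]=\mu(E)\sqrt{2/\pi}\,\mathfrak g_t(q)\to\sqrt{2/\pi}\,q\,\mu(E)$ from Lemma \ref{labesse}(i), and \eqref{thelogbound} for the mollified case), the $\limsup$ in $t$ of each difference is bounded by a constant times the probability that a $3$-Bessel process started at $q$ ever dips below $\rho_r$, i.e.\ by a constant times $1-\eta(q,r)$, which tends to $0$ as $r\to\infty$ by Lemma \ref{labesse}(iv).

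For items (iii)--(v): $M^{(q)}_t$ and $M^{(q,r)}_t$ are nonnegative supermartingales, hence converge a.s.; since the tilt gives $\bbE[M^{(q)}_t(E)]=\mu(E)\,\bP_0[B_u<q\ \forall u\le t]=\mu(E)\sqrt{2/(\pi t)}\,\mathfrak g_t(q)\to0$ and $\bbE[M^{(q,r)}_t(E)]\le\bbE[M^{(q)}_t(E)]$, Fatou forces both a.s.\ limits to be $0$. For $M_t$ itself — a martingale with $\bbE[M_t(E)]=\mu(E)$, hence not uniformly integrable — we invoke (iv): for $E\subset B(0,R)$ one has $M_t(E)=M^{(q)}_t(E)$ for every $t$ on $\cA^{(q)}_R$, so $M_t(E)\to0$ on $\cA^{(q)}_R$, and $\bigcup_q\cA^{(q)}_R$ has full probability. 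For (iv): for fixed $x$, $t\mapsto\bar X_t(x)-\sqrt{2d}t$ is a Brownian motion with drift $-\sqrt{2d}$, so $\sup_{t>0}(\bar X_t(x)-\sqrt{2d}t)$ has an exponential tail (Lemma \ref{labesse}(iii)); passing to the supremum over $|x|\le R$ requires a multiscale union bound — discretise $t$ into unit intervals, cover $B(0,R)$ by a net at the matching scale $e^{-t}$, control the oscillation of $\bar X_\cdot(\cdot)$ on each dyadic block by the Borell--TIS inequality using the finite-range dependence of the increments, and sum the block tails over $t$ — giving $\bP[(\cA^{(q)}_R)^{\mathrm c}]\to0$ as $q\to\infty$. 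Since $\cA^{(q)}_R$ is increasing in $q$, $q_0(R):=\min\{q\in\bbN:\cA^{(q)}_R\text{ holds}\}$ is a.s.\ finite, and on $\{q\ge q_0(R)\}$ all indicators $\ind_{A^{(q)}_t(x)}$, $\ind_{A^{(q)}_{t_\gep}(x)}$ are $\equiv1$ on $B(0,R)$, whence $M^{(q)}_t(E)=M_t(E)$, $M^{(q)}_\gep(E)=M_\gep(E)$ and (from the extra $+q$) $D^{(q)}_t(E)=D_t(E)+qM_t(E)$ for $E\subset B(0,R)$. Then (v) is immediate: on $\{q\ge q_0(R)\}$, $\bar D^{(q)}_\infty(E)=\limsup_{t\to\infty}(D_t(E)+qM_t(E))=\limsup_{t\to\infty}D_t(E)=\bar D_\infty(E)$, because $M_t(E)\to0$ by (iii).

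The main obstacle is the multiscale bound $\bP[\cA^{(q)}_R]\to1$ of item (iv): the per-point exponential tail is immediate, but upgrading it to a bound — uniform in the scale $t>0$ and the point $x\in B(0,R)$ — on the global supremum of $\bar X_t(x)-\sqrt{2d}t$ requires the chaining argument sketched above. Everything else is soft manipulation of the Brownian dictionary together with the inputs of Lemma \ref{labesse}; in particular, the conceptual heart of the double-truncation scheme — the $L^1$-closeness \eqref{rlimit} of the singly- and doubly-truncated objects — is short once Lemma \ref{labesse}(iv) is granted.
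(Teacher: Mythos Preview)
Your proof is correct and follows the same strategy as the paper: reduce via Cameron--Martin to one-dimensional Brownian/Bessel estimates and invoke Lemma~\ref{labesse}. Two remarks on where you differ. For item~(i) you argue via optional stopping for the stopped process $q'-B_{\cdot\wedge\tau}$ (with the moving barrier supplying a nonpositive drift), whereas the paper simply writes the It\^o differential $\dd Z^{(q,r)}_t=(\sqrt{2d}Z^{(q,r)}_t-W^{(q,r)}_t)\,\dd\bar X_t-\rho_r'(t)W^{(q,r)}_t\,\dd t$ and reads off the supermartingale property; these are equivalent. For items~(iii)--(iv) you are actually more explicit than the paper, which just cites \cite{critix}.

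The one place where you are materially lighter than the paper is the third line of~\eqref{rlimit}. Your phrase ``\eqref{thelogbound} for the mollified case'' hides a genuine wrinkle: tilting by $e^{\sqrt{2d}X_\gep-dK_\gep}$ shifts $\bar X_s(x)$ by $\sqrt{2d}\,\bar K_{s,\gep,0}(x)$ rather than $\sqrt{2d}s$, so the barrier in the Brownian event is perturbed by $\delta(s,\gep)=\sqrt{2d}(s-\bar K_{s,\gep,0}(x))$, which is small for $s\ll t_\gep$ but only $O(1)$ (not $o(1)$) near $s=t_\gep$. A straight $O(1)$ bound on the barrier does not immediately give the Bessel tail $\bQ_q[\exists u,\ \beta_u\le\rho_r(u)]$; the paper splits $[0,t_\gep]$ into $[0,t_\gep/2]$ (where $\delta$ is uniformly small) and $[t_\gep/2,t_\gep]$ (where the conditioned Brownian motion is already far from the barrier), and uses Lemma~\ref{labesse}(v) to transfer the conditioned-Brownian estimate to a Bessel estimate on the second piece. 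This is a technicality rather than a conceptual gap, but it is not covered by~\eqref{thelogbound} alone.
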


%

 \subsection{Moment estimates}
 
 The following technical estimates are necessary to compute the second moments of $M^{(q,r)}_t$ and $M^{(q,r)}_{\gep}$ (which play and important role in our proof). 
For $E\in \cB_b$  we let $\Diam(E)$ denote the diameter for the $\ell_{\infty}$ norm that is 
$$\Diam(E)= \sup\{ x,y\in E, \max_{i\in \lint 1,d\rint} |x_i-y_i|\}.$$

\begin{proposition}\label{lateknik}
Setting  $u(x,y,t) :=  \left(\log \frac{1}{|x-y|\wedge 1}\right)   \wedge  t$. There exists $C_{q,r,R}>0$ such that  forall $t\ge 0$ and  $x,y\in B(0,R)$,
 \begin{equation}\label{tek1}
 \  \bbE\left[ W^{(q,r)}_t(x) W^{(q,r)}_t(y)\right] \le C e^{d u-\sqrt{2d}\rho(u)} (u+1)^{-3/2}  (t-u+1)^{-1}.
 \end{equation}
  Setting $v(x,y,\gep):= \left(\log \frac{1}{|x-y|\wedge 1}\right) \wedge \log( 1/\gep)$, there exists $C_{q,R}>0$ such that for all $\gep\in (0,1]$ and  $x,y\in B(0,R),$
 \begin{equation}\label{tek2}
  \  \bbE\left[ W^{(q,r)}_\gep(x) W^{(q,r)}_\gep(y)\right] \le C e^{d v-\sqrt{2d}\rho(u)} (v+1)^{-3/2}  ( t_{\gep}-v+1)^{-1}.
 \end{equation}
 As  consequences we have (with a possibly different constant $C$)
 \begin{itemize}
  \item[(i)] Given $R>0$, there exists a finite measure $\nu_R$ on $B(0,R)$  such that  forall  $E\subset B(0,R)$ with $\Diam(E)\le 1$ we have

  \begin{equation}\label{hiphip}
  \limsup_{t\to \infty} \bbE\left[ t(M^{(q,r)}_t(E))^2 \right]\le C e^{-(\sqrt{2d}-1)\rho\left(\log (\frac 1 {\Diam(E)})\right)}\nu_R(E).
  \end{equation}
  \item[(ii)] For any $E\in \mathcal B_b$, we have
  \begin{equation}\label{whendelta}
  \begin{split}
  &\lim_{\delta\to 0}   \sup_{t\ge 0} \int_{E^2} \bbE\left[ t W^{(q,r)}_t(x)W^{(q,r)}_t(y)\right]\ind_{\{|x-y|\le \delta\}} \mu(\dd x)\mu( \dd y)=0,\\
&\lim_{\delta\to 0}   \sup_{\gep\in (0,1]} \int_{E^2} \bbE\left[ t_{\gep} W^{(q,r)}_\gep(x)W^{(q,r)}_\gep(y)\right]\ind_{\{|x-y|\le \delta\}} \mu(\dd x)\mu( \dd y)=0.
\end{split}
  \end{equation}
 \end{itemize}
\end{proposition}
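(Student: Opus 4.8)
The plan is to establish the two pointwise estimates \eqref{tek1}--\eqref{tek2} by a sequence of Cameron--Martin/Girsanov transformations combined with the Brownian estimates of Lemma \ref{labesse}, and then to deduce \eqref{hiphip} and \eqref{whendelta} by integrating against $\mu\times\mu$ and invoking \eqref{clazomp}. I treat \eqref{tek1} in detail; the mollified bound \eqref{tek2} is entirely analogous, with $t$ replaced by $t_\gep=\log(1/\gep)$ and the decorrelation scale $u$ replaced by $v$, the only difference being that the mollification at scale $\gep$ freezes the field below that scale, which is why $v$ is capped by $\log(1/\gep)$.

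Fix $x,y\in B(0,R)$ and write $u=u(x,y,t)$. Since $\kappa$ is supported in $B(0,1)$, the increments $(X_s-X_u)(x)$ and $(X_s-X_u)(y)$, $s\ge u$, are independent and independent of $\cF_u$, and moreover $\bar K_t(x,y)=\bar K_u(x,y)$ (Section \ref{considerations}). Conditioning on $\cF_u$ and using this independence,
\begin{equation*}
 \bbE\big[W^{(q,r)}_t(x)W^{(q,r)}_t(y)\big]=\bbE\Big[\ind_{A^{(q,r)}_u(x)\cap A^{(q,r)}_u(y)}\,W^{(q,r)}_u(x)\,W^{(q,r)}_u(y)\,\psi_x\,\psi_y\Big],
\end{equation*}
where $\psi_z=\bbE\big[e^{\sqrt{2d}(X_t-X_u)(z)-d(t-u)}\ind_{\{\bar X_s(z)<\sqrt{2d}s-\rho_r(s)+q,\ u\le s\le t\}}\mid\cF_u\big]$ (and $\psi_z\le1$ trivially, which is all that is needed in the degenerate case $u=t$). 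Applying Proposition \ref{cameronmartinpro} to the increment field at $z$, the tilt by $\sqrt{2d}(X_t-X_u)(z)$ turns $\psi_z$ into a probability under which $s\mapsto (X_s-X_u)(z)-\sqrt{2d}(s-u)$ is a driftless Brownian motion from $0$; the event becomes $(X_s-X_u)(z)-\sqrt{2d}(s-u)<b_z-\rho_r(s)$ with $b_z:=\sqrt{2d}u+q-\bar X_u(z)$, and on $A^{(q,r)}_u(z)$ one has $b_z-\rho_r(u)>0$. Since $\rho_r$ is increasing, $\rho_r(s)\ge\rho_r(u)$ for $s\ge u$, so Lemma \ref{labesse}(i) yields $\psi_z\le C\,(b_z-\rho_r(u))/\sqrt{t-u+1}$.

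It remains to bound the ``coarse'' expectation. On $[0,u]$ the processes $\bar X_\cdot(x)$ and $\bar X_\cdot(y)$ differ by a Gaussian process of variance $\le 2C_R$ (because $|\bar K_s(x,y)-s|\le C_R$ there), so a two-point Gaussian computation on $[0,u]$ (cf.\ \cite{critix}) allows one to replace, at the cost of a constant depending only on $q,R$, the quantity $W^{(q,r)}_u(x)W^{(q,r)}_u(y)\ind_{A^{(q,r)}_u(x)\cap A^{(q,r)}_u(y)}(b_x-\rho_r(u))(b_y-\rho_r(u))$ by $e^{2\sqrt{2d}\bar X_u-2du}\ind_{\bar A_u}(\sqrt{2d}u+q-\rho_r(u)-\bar X_u)^2$, where $\bar X:=\bar X(x)$ and $\bar A_u=\{\bar X_s<\sqrt{2d}s-\rho_r(s)+q,\ \forall s\le u\}$. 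A further use of Proposition \ref{cameronmartinpro} with the tilt $2\sqrt{2d}\bar X_u$ (which gives $\bar X_\cdot$ drift $2\sqrt{2d}$ on $[0,u]$), followed by an elementary Girsanov change of drift, reduces the coarse expectation to $C_{q,R}\,e^{du}\,\bP_q\big[e^{-\sqrt{2d}B_u}(B_u-\rho_r(u))^2\ind_{\{B_s>\rho_r(s),\ \forall s\le u\}}\big]$ with $B$ a driftless Brownian motion from $q$. Decomposing on the value of $B_u$ and bounding the probability that the bridge from $q$ to $B_u$ stays above the concave curve $\rho_r$ by the probability that it stays above the chord joining $(0,0)$ to $(u,\rho_r(u))$ --- equal to $1\wedge 2q(B_u-\rho_r(u))/u$ by Lemma \ref{labesse}(ii) --- the weight $e^{-\sqrt{2d}B_u}$ together with the factor $(B_u-\rho_r(u))^2$ makes the resulting $z$-integral converge and produces the bound $C_q\,e^{-\sqrt{2d}\rho_r(u)}(u+1)^{-3/2}$. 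Since $\rho$ is concave one has the elementary inequality $e^{-\sqrt{2d}\rho_r(u)}\le C_r\,e^{-\sqrt{2d}\rho(u)}$; collecting all factors gives \eqref{tek1} (in the case $u=t$ the fine part is absent and the same coarse estimate, now without the $(B_u-\rho_r(u))^2$ factor, gives the bound directly since $t-u+1=1$). I expect this coarse estimate to be the crux: it is essential that the $\rho_r$-truncation was retained also in the fine part, so that the quantity integrated at scale $u$ is $(B_u-\rho_r(u))^2$ and not $B_u^2$ --- the latter would introduce a spurious $\rho(u)^2$ in the exponentially small regime (harmless for \eqref{hiphip}--\eqref{whendelta}, since $\rho(u)^2 e^{-(\sqrt{2d}-1)\rho(u)}$ is bounded, but it would spoil the stated form of \eqref{tek1}).

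Finally, for the consequences, when $e^{-t}\le|x-y|\le1$ one has $u=\log\frac1{|x-y|}$, $e^{du}=|x-y|^{-d}$, and $\sup_{0\le u\le t}\frac{t}{(u+1)^{3/2}(t-u+1)}\le C$; since $d\ge1$ we have $\sqrt{2d}>1$, hence $e^{-\sqrt{2d}\rho(u)}\le e^{-\rho(u)}\,e^{-(\sqrt{2d}-1)\rho(\log\frac1{\Diam(E)})}$ for $x,y\in E$. Multiplying \eqref{tek1} by $t$ and integrating over $\{e^{-t}\le|x-y|\le1\}$ against $\mu\times\mu$ then gives the right-hand side of \eqref{hiphip}, with $\nu_R(\dd x):=C\big(\int_{B(0,R)}\frac{\ind_{\{|x-y|\le1\}}}{|x-y|^d e^{\rho(\log(1/|x-y|))}}\mu(\dd y)\big)\mu(\dd x)$, a finite measure on $B(0,R)$ by \eqref{clazomp}; the contribution of $\{|x-y|<e^{-t}\}$ (where $u=t$) is at most $C t^{-1/2}e^{dt-\sqrt{2d}\rho(t)}\,\mu\times\mu(\{|x-y|<e^{-t}\})$ and tends to $0$ as $t\to\infty$, since \eqref{clazomp} forces $\mu\times\mu(\{|x-y|<e^{-t}\})\le e^{-dt+\rho(t)}\int_{\{|x-y|<e^{-t}\}}\frac{\mu(\dd x)\mu(\dd y)}{|x-y|^d e^{\rho(\log(1/|x-y|))}}\to0$. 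For \eqref{whendelta} one fixes a threshold $T_0$ large enough that $t\mapsto dt-\sqrt{2d}\rho(t)$ is increasing on $[T_0,\infty)$: for $t\le T_0$ Cauchy--Schwarz and a Gaussian moment computation give $\bbE[W^{(q,r)}_t(x)W^{(q,r)}_t(y)]\le C_{R,T_0}$, so together with the atomlessness of $\mu$ (implied by \eqref{clazomp}) one gets $\sup_{t\le T_0}(\cdots)\to0$ as $\delta\to0$, while for $t>T_0$ one repeats the estimate above. The mollified estimate \eqref{tek2} and the second displayed equality in \eqref{whendelta} follow by the same argument with $t_\gep$ and $v$ in place of $t$ and $u$.
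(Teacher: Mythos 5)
Your proposal follows the same overall plan as the paper --- Cameron--Martin to convert the second moment into the probability of a Brownian trajectory constraint, a split of the time interval at the decorrelation scale $u$, Brownian estimates from Lemma~\ref{labesse} on the fine part and a Brownian-bridge estimate on the coarse part, then a Gaussian integral --- so the structure is essentially the paper's. The one genuine divergence is in how the coarse part $[0,u]$ is handled. The paper uses a symmetry reduction (restrict to $\bar X_u(x)\le \bar X_u(y)$ at the cost of a factor $2$) and then \emph{drops the $y$-constraint on $[0,u]$ entirely}, which turns the coarse part into a one-dimensional problem directly; the only Gaussian computation is a straightforward integral over the law of $\bar X_u(x)$. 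You instead keep both constraints and \emph{merge} $\bar X_\cdot(x)$ and $\bar X_\cdot(y)$ on $[0,u]$ into a single process using the bounded variance of the difference, invoking a ``two-point Gaussian computation (cf.~\cite{critix})''. That merge is plausible, but as sketched it is the weakest link: you must control, inside an exponential weight, both the ratio $W^{(q,r)}_u(x)/W^{(q,r)}_u(y)=e^{\sqrt{2d}(\bar X_u(x)-\bar X_u(y))}$ and the discrepancy $(b_x-\rho_r(u))(b_y-\rho_r(u))$ versus $(b_x-\rho_r(u))^2$, and the interaction of these with the two constraint events. The paper's ``drop one constraint'' route avoids this entirely, at no loss in the final power of $u$. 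A second, minor point: your bound $\psi_z\le C(b_z-\rho_r(u))/\sqrt{t-u+1}$ fails when $t-u<1$ (e.g.\ at $t=u$, $\psi_z=1$ while $b_z-\rho_r(u)$ can be arbitrarily small); one should write $\psi_z\le 1\wedge C(b_z-\rho_r(u))/\sqrt{t-u}$ (which is what the paper does, via the $1\wedge\frac{2(\cdot)_+^2}{\pi(t-u)}$ form) and let the Gaussian integral absorb the two regimes.

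The derivations of (i) and (ii) are correct and close to the paper's: you split $|x-y|\gtrless e^{-t}$ and handle the two regions separately, whereas the paper uses the uniform bound $(u+1)^{3/2}(t-u+1)\ge t$ and the monotonicity of $s\mapsto ds-\sqrt{2d}\rho(s)$ for large $s$ to do it in one step. Your version is slightly more explicit and in fact avoids the small-$s$ caveat in the monotonicity of $ds-\sqrt{2d}\rho(s)$; both are fine. Your observation that retaining the $\rho_r$-truncation on the fine part is what produces $(B_u-\rho_r(u))^2$ rather than $B_u^2$ is accurate and is implicit in the paper's computation as well. Your identification of the finite measure $\nu_R$ matches the paper's. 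In summary: same strategy, a genuinely different (and somewhat more technical) handling of the coarse two-point computation that would need to be fleshed out to be rigorous, and a slightly different bookkeeping for the consequence (i).
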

 
 \subsection{Weak convergence of measure}
 
 Let us finally recall how convergence of $M_t$ and $M_{\gep}$ seen as processes indexed by $\mathcal B_b$ implies weak convergence of measure
 (for a proof the reader can  refer to \cite[Proposition B.1]{critix}).
 
 \begin{proposition}\label{weako}
 Let $M_n$ be a sequence of non-negative random measures and let us assume that for any fixed $E\in \cB_b$, 
 $M_n(E)$ converges in probability towards a finite limit $\bar M(E)$.
 Then the following holds
 \begin{itemize}
  \item [(i)] For any $f\in B_b$, $M_n(f)$ converges in probability towards a finite limit $\bar M(f)$.
  \item [(ii)]
  There exists a random measure $M$  towards which $M_n$ converges in probability.
  \item [(iii)] If additionally there exist $K>0$ such that $\bbE[M_n(E)]\le K\gl(E)$ for all $E\in \mathcal B_b$ and $n$, then for all    $f\in B_b$, $\bbP[\bar M(f)= M(f)]=1$.
 \end{itemize}
If the convergence of  $M_n(E)$ holds a.s.\ then the convergence $(i)$ and $(ii)$ also hold a.s.

\end{proposition}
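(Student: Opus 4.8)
The plan is to handle the three items in turn, using only elementary measure theory together with the structural fact recalled just before the statement: the set $\mathcal M$ of locally finite nonnegative measures on $\bbR^d$, equipped with the weak topology \eqref{wikiwik}, is separable and metrizable, and one may fix a complete metric $\varrho$ inducing it of the form $\varrho(\mu,\nu)=\sum_{k\ge1}2^{-k}\bigl(1\wedge|\mu(f_k)-\nu(f_k)|\bigr)$ for some countable convergence-determining family $(f_k)_{k\ge1}\subset C_c(\bbR^d)$, which we may moreover take to contain, for each $R\in\bbN$, a nonnegative function dominating $\ind_{B(0,R)}$. For \textbf{item (i)}, I would first reduce a general $f\in B_b$ to simple functions: writing $f=f^+-f^-$, each nonnegative $g\in B_b$ with support in a fixed bounded Borel set $S$ is a uniform limit of finite nonnegative combinations $g_m=\sum_j c^{(m)}_j\ind_{E^{(m)}_j}$ of indicators of Borel subsets $E^{(m)}_j\subset S$ (the usual dyadic approximation from below, the piece $\{g=0\}$ carrying coefficient $0$), with $\|g-g_m\|_\infty\to0$. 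For fixed $m$, $M_n(g_m)=\sum_j c^{(m)}_j M_n(E^{(m)}_j)\to\bar M(g_m):=\sum_j c^{(m)}_j\bar M(E^{(m)}_j)$ in probability by linearity, while $|M_n(g)-M_n(g_m)|\le\|g-g_m\|_\infty M_n(S)$ with $M_n(S)\to\bar M(S)$ in probability, hence bounded in probability uniformly in $n$; a standard $3\varepsilon$ argument shows $(M_n(g))_n$ is Cauchy in probability, so it converges in probability to a limit $\bar M(g)\le\|g\|_\infty\bar M(S)<\infty$. Taking differences gives the claim for $f\in B_b$; if the convergence of $M_n(E)$ is a.s., the same estimates performed $\omega$ by $\omega$ on the full-measure event where the countably many sets involved and $M_n(S)$ all converge give a.s.\ convergence.

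For \textbf{item (ii)} the point is to upgrade the ``for each $E$'' convergence to convergence of $M_n$ as random \emph{measures}. Since by item (i) (applied to $f_k\in C_c\subset B_b$) we have $M_n(f_k)\to\bar M(f_k)$ in probability for every $k$, a diagonal extraction shows that every subsequence of $(M_n)$ has a further subsequence $(M_{n_j})_j$ along which $M_{n_j}(f_k)\to\bar M(f_k)$ almost surely, simultaneously for all $k$. On the corresponding a.s.\ event, for each $R$ one has $\sup_j M_{n_j}(B(0,R))\le\sup_j M_{n_j}(f_{k(R)})<\infty$, so the measures $M_{n_j}$ have uniformly bounded mass on every ball and are therefore vaguely relatively compact; any vague subsequential limit $\nu$ satisfies $\nu(f_k)=\bar M(f_k)$ for all $k$, and since $(f_k)$ separates measures this determines $\nu=:M$ uniquely, whence $M_{n_j}\to M$ a.s. Two limits arising from different subsequences coincide because $M_n(f_k)\to\bar M(f_k)$ in probability forces any a.s.\ subsequential limit of $M_n(f_k)$ to equal $\bar M(f_k)$ a.s.; thus there is a single random element $M$ of $\mathcal M$ such that every subsequence of $(M_n)$ admits a sub-subsequence converging a.s.\ to $M$, which is exactly $M_n\to M$ in probability. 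When the convergence in item (i) is a.s., no subsequences are needed and $M_n\to M$ a.s.

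For \textbf{item (iii)}, the extra hypothesis $\bbE[M_n(E)]\le K\lambda(E)$ serves to propagate the identity $M(f)=\bar M(f)$ — which holds a.s.\ for $f\in C_c$, since $M_n(f)\to M(f)$ in probability by continuity of $\mu\mapsto\mu(f)$ and also $M_n(f)\to\bar M(f)$ in probability — from continuous $f$ to all of $B_b$. First, Fatou along a.s.-convergent subsequences and outer regularity of $\lambda$ yield $\bbE[\bar M(E)]\le K\lambda(E)$ and $\bbE[M(E)]\le K\lambda(E)$ for all $E\in\mathcal B_b$; from these one checks that $\bar M(\cdot)$ is a.s.\ countably additive and that $E\mapsto\bbE[M(E)]$ and $E\mapsto\bbE[\bar M(E)]$ are Radon measures agreeing against every $f\in C_c$, hence equal. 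Next, for open bounded $U$ and $g_m\uparrow\ind_U$ with $g_m\in C_c^+$ one gets $M(U)=\sup_m M(g_m)=\sup_m\bar M(g_m)\le\bar M(U)$ a.s.; so for $E\in\mathcal B_b$ and open $U\supset E$, $M(E)\le M(U)\le\bar M(U)=\bar M(E)+\bar M(U\setminus E)$ a.s., and letting $U\downarrow E$ with $\bbE[\bar M(U\setminus E)]\le K\lambda(U\setminus E)\to0$ gives $M(E)\le\bar M(E)$ a.s. Combined with $\bbE[M(E)]=\bbE[\bar M(E)]<\infty$ this forces $M(E)=\bar M(E)$ a.s.\ for every $E\in\mathcal B_b$, and the simple-function approximation of item (i) (invoked along the countably many sets occurring) then extends the identity to all $f\in B_b$.

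I expect \textbf{item (ii)} to be the crux: the hypothesis only controls $M_n$ one Borel set at a time, and turning this into the convergence of an $\mathcal M$-valued random variable genuinely needs the Polish structure of $\mathcal M$ together with the uniform-mass-on-balls bound feeding a tightness/subsequence argument. A secondary delicate point lies in item (iii), where the $\le K\lambda$ control is precisely what rules out the pathology that the set-indexed limit $\bar M$ overshoots the genuine measure $M$ on sets that are not open.
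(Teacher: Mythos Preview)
The paper does not give its own proof of this proposition; it simply refers the reader to \cite[Proposition B.1]{critix}. So there is no in-paper argument to compare against, and your proposal has to be judged on its own.

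Your argument is essentially correct. Item~(i) is a clean $3\varepsilon$/Cauchy-in-probability reduction to simple functions, and item~(ii) is the standard sub-subsequence argument once you have secured a countable convergence-determining family in $C_c$ together with a mass bound on balls; both are fine. Item~(iii) is the only place where one has to be a little careful, and your outline handles it correctly: from $M(f)=\bar M(f)$ a.s.\ for $f\in C_c$ and the bound $\bbE[M_n(E)]\le K\lambda(E)$, you get that the intensity measures $E\mapsto\bbE[M(E)]$ and $E\mapsto\bbE[\bar M(E)]$ coincide, then the open-set sandwich $M(E)\le M(U)\le\bar M(U)=\bar M(E)+\bar M(U\setminus E)$ together with $\bbE[\bar M(U\setminus E)]\le K\lambda(U\setminus E)\to 0$ gives $M(E)\le\bar M(E)$ a.s., and equality of expectations forces equality. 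One phrasing to tighten: you write that ``$\bar M(\cdot)$ is a.s.\ countably additive'', but you neither prove nor need this as a statement about \emph{all} sequences simultaneously; what your argument actually uses (and what follows from $\bbE[\bar M(E)]\le K\lambda(E)$) is countable additivity of the \emph{intensity} $E\mapsto\bbE[\bar M(E)]$ and finite additivity of $\bar M$ along the specific sets that appear. With that clarification the proof goes through.
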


\section{Convergence of the derivative martingale} \label{caseofdt}

 \subsection{The main statement}
The convergence of $D_t$ is going to be deduced from that of  $D^{(q,r)}_t$ using of the technical lemmas from Section \ref{prelimlim}.
The main statement to be proved in this section is thus the following.

 \begin{proposition}\label{troncR}
 Under the assumption \eqref{clazomp}, for any $q,r\ge 0$,  for every $E\in \mathcal B_b$
$(D^{(q,r)}_t(E))_{t\ge 0}$  is bounded in $L^2$. As a consequence we have
 \begin{equation}\label{toctoctoc}
   \bbE\left[\overline D^{(q.r)}_\infty(E) \right]= \eta(q,r) \mu(E),
\end{equation}
where 
$$\eta(q,r):= q\bQ_q\left( \forall t>0, \beta_t \ge \rho_r(t) \right)>0.$$
Furthermore  $D^{(q,r)}_t$ converges almost surely weakly to a random measure $ D^{(q,r)}_{\infty}$ which is such that for every $E\in \mathcal B_b$
\begin{equation}\label{exprq}
 \bbP\left[D^{(q,r)}_\infty(E)=  \overline D^{(q.r)}_\infty(E)\right]=1.
\end{equation}
Lastly given  $\alpha\in (0,\sqrt{2d}-1)$ and  $R>0$ we have
\begin{equation}\label{unionofsets}
 \sup_{E\subset B(0,R)}D^{(q,r)}_{\infty}(E)e^{\alpha\rho(\log (1/\Diam(E))}<\infty,
\end{equation}
(in particular the measure is atomless).
 \end{proposition}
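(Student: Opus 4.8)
The plan is to establish the three assertions of Proposition~\ref{troncR} in sequence, the crux being the $L^2$ bound.

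\textbf{Step 1: $L^2$-boundedness of $D^{(q,r)}_t(E)$.}
First I would reduce to the case $\Diam(E)\le 1$; a general $E\in\cB_b$ can be covered by finitely many such sets, and the supermartingale property from Proposition~\ref{listov}(i) keeps first moments under control, so it suffices to bound the second moment on small sets (and one may also localize to $E\subset B(0,R)$ for some $R$). Writing
\[
\bbE\left[(D^{(q,r)}_t(E))^2\right]=\int_{E^2}\bbE\left[Z^{(q,r)}_t(x)Z^{(q,r)}_t(y)\right]\mu(\dd x)\mu(\dd y),
\]
the integrand involves the factor $(\sqrt{2d}t+q-\rho_r(t)-\bar X_t(x))(\sqrt{2d}t+q-\rho_r(t)-\bar X_t(y))$ times $W^{(q,r)}_t(x)W^{(q,r)}_t(y)$. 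Using the Cameron--Martin formula (Proposition~\ref{cameronmartinpro}) to tilt by $\sqrt{2d}X_t(x)+\sqrt{2d}X_t(y)$ and then the bounds \eqref{thelogbound}--\eqref{steak} on the covariances, the Gaussian weight $e^{\sqrt{2d}X_t-dK_t}$ at the two points produces a factor comparable to $e^{d u}$ with $u=u(x,y,t)$ (the decorrelation scale), while the indicators $A^{(q,r)}_t(x)\cap A^{(q,r)}_t(y)$ become, after the tilt, barrier events for two Brownian motions that coincide up to time $\approx u$ and then run independently. The key point: on $[0,u]$ the common path must stay below $\sqrt{2d}s+q-\rho_r(s)$, i.e. (after the linear shift $s\mapsto \sqrt{2d}s$) a $3$-Bessel-type path staying above $\rho_r$, which by the ballot-type estimates of Lemma~\ref{labesse}(i)--(ii) costs $(u+1)^{-3/2}$ and forces the endpoint to be of order $\sqrt{u}$; from $u$ to $t$ each independent path contributes another $(t-u+1)^{-1}$ factor, and the linear terms $(\sqrt{2d}t+q-\rho_r(t)-\bar X_t)$ are absorbed because under the tilted/conditioned law they are typically $O(\sqrt{t-u})$, eating one of the two $(t-u+1)^{-1/2}$'s each. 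This is exactly the content of Proposition~\ref{lateknik}, estimate \eqref{tek1}. Integrating \eqref{tek1} against $\mu(\dd x)\mu(\dd y)$, the dangerous region is $|x-y|$ small, i.e. $u$ large; there $e^{du}(u+1)^{-3/2}\le e^{du-\sqrt{2d}\rho(u)}\cdot e^{\sqrt{2d}\rho(u)}(u+1)^{-3/2}$, and since $u=\log\frac{1}{|x-y|}$ we get $e^{du}=|x-y|^{-d}$ multiplied by $e^{-\sqrt{2d}\rho(u)}$, so the integral is dominated by
\[
\int_{E^2}\frac{\mu(\dd x)\mu(\dd y)\,\ind_{\{|x-y|\le 1\}}}{|x-y|^{d}e^{\sqrt{2d}\rho(\log\frac{1}{|x-y|})}}\cdot\frac{1}{(t-u+1)},
\]
and dropping the last factor this is exactly \eqref{clazomp} (with room to spare, since $\sqrt{2d}\ge\sqrt2>1$ means \eqref{clazomp}, stated with $\rho$ in place of $\sqrt{2d}\rho$, is weaker). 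Hence $\sup_t\bbE[(D^{(q,r)}_t(E))^2]<\infty$.

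\textbf{Step 2: identification of the limit and its mean.}
Being a nonnegative supermartingale bounded in $L^2$, $(D^{(q,r)}_t(E))_{t\ge0}$ converges a.s.\ and in $L^1$ to a limit which therefore equals $\overline D^{(q,r)}_\infty(E)=\limsup_t D^{(q,r)}_t(E)$; and $L^1$-convergence gives $\bbE[\overline D^{(q,r)}_\infty(E)]=\lim_t\bbE[D^{(q,r)}_t(E)]=\eta(q,r)\mu(E)$ by Proposition~\ref{listov}(ii), which also gives $\eta(q,r)=q\bQ_q(\forall t>0,\ \beta_t\ge\rho_r(t))>0$ via the Doob--McKean identity \eqref{dmk} (the factor $q$ from the $B_t/a$ weight). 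To upgrade to weak convergence of measures: for a fixed countable algebra of boxes, apply the above to each box, take the a.s.\ limit simultaneously, and invoke Proposition~\ref{weako}(i)--(iii) (the hypothesis $\bbE[D^{(q,r)}_t(E)]\le\eta(q,r)\mu(E)\le\mu(E)$ supplies the required domination by the fixed measure $\mu$), yielding a random measure $D^{(q,r)}_\infty$ with $\bbP[D^{(q,r)}_\infty(E)=\overline D^{(q,r)}_\infty(E)]=1$ for each $E$.

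\textbf{Step 3: the uniform modulus-of-continuity bound \eqref{unionofsets}.}
Fix $\alpha\in(0,\sqrt{2d}-1)$ and $R>0$. Apply Proposition~\ref{lateknik}(i): for $E\subset B(0,R)$ with $\Diam(E)\le1$,
\[
\bbE\left[\overline D^{(q,r)}_\infty(E)^2\right]\le\liminf_t \bbE\left[(D^{(q,r)}_t(E))^2\right]\le C\,e^{-(\sqrt{2d}-1)\rho(\log\frac{1}{\Diam(E)})}\,\nu_R(E)
\]
(using Fatou and that $t(M^{(q,r)}_t)^2\ge$ something comparable is not needed — rather, the same tilting computation directly yields this bound for $D^{(q,r)}_\infty$; alternatively combine \eqref{hiphip} with the fact that the linear prefactors are controlled). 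Now run a chaining/dyadic argument: partition $B(0,R)$ into dyadic boxes of side $2^{-k}$, for each $k$ and each such box $E$ one has $\bbE[D^{(q,r)}_\infty(E)^2]\lesssim 2^{-(\sqrt{2d}-1)k\cdot c}\nu_R(E)$ (for the relevant $\rho$ growth, $\rho(\log\frac{1}{\Diam})\gtrsim (k\log2)^{1/4}$ by the standing assumption $\rho(u)\ge u^{1/4}$, but more usefully $e^{-(\sqrt{2d}-1)\rho(k\log2)}$ is summable enough), so $\bbP[D^{(q,r)}_\infty(E)e^{\alpha\rho(\log\frac{1}{\Diam E})}>\lambda]$ is summable over all dyadic boxes once $\alpha<\sqrt{2d}-1$ leaves a gap $\sqrt{2d}-1-\alpha>0$ to absorb into the summability of $\nu_R$ and the exponential. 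A Borel--Cantelli argument over the countable family of dyadic boxes, together with the fact that every $E\subset B(0,R)$ with small diameter is contained in a bounded number of dyadic boxes of comparable size, gives the a.s.\ finiteness of $\sup_{E\subset B(0,R)}D^{(q,r)}_\infty(E)e^{\alpha\rho(\log(1/\Diam E))}$. Atomlessness is the special case: a single point $x$ has $D^{(q,r)}_\infty(\{x\})\le D^{(q,r)}_\infty(E)\le C e^{-\alpha\rho(\log(1/\Diam E))}\to0$ as $E\downarrow\{x\}$.

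\textbf{Main obstacle.} The genuine difficulty is entirely concentrated in Step~1 — more precisely in proving Proposition~\ref{lateknik}, the two-point moment estimate \eqref{tek1}. One must carefully decompose the field at the decorrelation scale $u$, handle the interplay between the Bessel-barrier cost on $[0,u]$, the Gaussian mass $e^{du}$, the two independent excursions on $[u,t]$, and crucially show that the linear prefactors $(\sqrt{2d}t+q-\rho_r(t)-\bar X_t)$ contribute only polynomially (not exponentially) after conditioning — this is where items (v)--(vi) of Lemma~\ref{labesse}, comparing Brownian motion conditioned to stay positive with the $3$-Bessel process, do the essential work. Getting the power of $\rho$ in the exponent to be exactly $\sqrt{2d}$ (hence matching \eqref{clazomp} optimally) requires using the second truncation at level $\rho_r$ in a sharp way rather than discarding it.
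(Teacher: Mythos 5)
Your overall architecture (Step~1: $L^2$ bound; Step~2: supermartingale convergence and first-moment identification via Doob--McKean; Step~3: dyadic Borel--Cantelli for \eqref{unionofsets} and atomlessness) matches the paper, and Steps~2 and~3 are essentially the paper's argument. The difference --- and the slippage --- is in Step~1.

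You set out to bound $\bbE\bigl[\bigl(D^{(q,r)}_t(E)\bigr)^2\bigr]$ directly by estimating $\bbE\bigl[Z^{(q,r)}_t(x)Z^{(q,r)}_t(y)\bigr]$, and your heuristic is the right one: after tilting, the two linear prefactors are each $O(\sqrt{t-u})$ on the surviving event and therefore cancel the $(t-u+1)^{-1}$ produced by the two independent barrier probabilities. But you then assert ``this is exactly the content of \eqref{tek1}'' and proceed to integrate \eqref{tek1} against $\mu\otimes\mu$. That is a gap: \eqref{tek1} is a bound on $\bbE\bigl[W^{(q,r)}_t(x)W^{(q,r)}_t(y)\bigr]$, \emph{without} the linear prefactors, so integrating it yields $\sup_t t\,\bbE\bigl[\bigl(M^{(q,r)}_t(E)\bigr)^2\bigr]<\infty$ (i.e.\ \eqref{hiphip}), not $\sup_t\bbE\bigl[\bigl(D^{(q,r)}_t(E)\bigr)^2\bigr]<\infty$. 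To close it along your direct route you would have to redo the two-point Cameron--Martin computation with the factors $(\sqrt{2d}t+q-\rho_r(t)-\bar X_t(x))$ and $(\sqrt{2d}t+q-\rho_r(t)-\bar X_t(y))$ carried through; in effect replace the endpoint ballot factors $z/\sqrt{t-u}$ from Lemma~\ref{labesse}(i) by the Doob--McKean weight, which makes each branch contribute $O(1)$ in $t$. This is doable but is a genuinely separate computation.

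The paper avoids it entirely: by \eqref{cook} one has $D^{(q,r)}_s(E)\le (1-\delta(r+s))^{-1}\tilde D^{(q,r)}_s(E)$ for $s$ large, and Lemma~\ref{lecondit} identifies $\tilde D^{(q,r)}_s(E)$ as the $L^2$-limit of $\bbE\bigl[\sqrt{\pi(t-s)/2}\,M^{(q,r)}_t(E)\,\big|\,\cF_s\bigr]$; Jensen's inequality for conditional expectation then gives
\begin{equation*}
\bbE\bigl[D^{(q,r)}_s(E)^2\bigr]\le 4\,\bbE\bigl[\tilde D^{(q,r)}_s(E)^2\bigr]\le \limsup_{t\to\infty} 2\pi t\,\bbE\bigl[M^{(q,r)}_t(E)^2\bigr],
\end{equation*}
and now \eqref{tek1} is exactly what is needed. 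So the conditional-expectation trick converts the $W$-moment bound you proved into the $D$-moment bound you wanted, sidestepping the prefactor computation. Either route works; yours is more computational and needs the missing step spelled out, the paper's is the cleaner reduction. (Two minor points: the factor from two independent excursions is $(t-u+1)^{-1}$ in total, not each; and in Step~2 the identity $\eta(q,r)=q\,\bQ_q(\forall t>0,\ \beta_t\ge\rho_r(t))$ that you correctly state --- with the factor $q$ from the Doob--McKean weight --- is the one used in \eqref{dope}.)
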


Let us explain how this section is organized.
The convergence of $D_t$ (Theorem \ref{derivmartin}) is not very hard to derive from the above proposition. 
We perform this derivation in Section \ref{tekel}. In Section \ref{tekteklem}, we present a relation between $D^{(q,r)}_t$ and $M^{(q,r)}_t$ obtained using conditional expectation. This relation is helpful (but not at all crucial) to compute the second moment of $D^{(q,r)}_t$ and this is the reason one it is introduced in this section. Its principal 
use (as will be seen in Section \ref{caseofmt}) is for the proof of the convergence of $\sqrt{\frac{\pi t}{2}}M^{(q,r)}_t$.
Finally in Section \ref{thundertem} we prove Proposition \ref{troncR}.

\subsection{Proof of Theorem \ref{derivmartin}}\label{tekel}

The proof goes in two steps. Firstly, we use Proposition \ref{troncR} in order to deduce convergence of $D^{(q)}_{\infty}$.

\begin{proposition}\label{uint}
For any $q\ge 0$, for any $E\in \mathcal B_b$, 
 $(D^{(q)}_t(E))_{t\ge 0}$ is uniformly integrable and hence
 \begin{equation}\label{despe}
\bbE\left[ \overline  D^{(q)}_{\infty}(E)\right]=q\mu(E).
\end{equation}
Furthermore, there exists a locally finite atomless measure $D^{(q)}_{\infty}$ such that  
for any $E\in \mathcal B_b$, we have with probability $1$
\begin{equation}\label{expq}
\bbP\left[ D^{(q)}_{\infty}(E)=  \bar D^{(q)}_{\infty}(E)\right]=1. 
\end{equation}

\end{proposition}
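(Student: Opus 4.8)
The plan is to derive Proposition~\ref{uint} from Proposition~\ref{troncR} through the $L^1$-approximation criterion for uniform integrability (Proposition~\ref{metaprop}), and then to read off the properties of the limiting measure from Proposition~\ref{weako}. Fix $q\ge 1$ (the case $q=0$ is trivial since then $A^{(0)}_t(x)=\eset$, so $D^{(0)}_t\equiv 0$) and $E\in\mathcal B_b$. By Proposition~\ref{listov}(i), $(D^{(q)}_t(E))_{t\ge 0}$ is a nonnegative $(\cF_t)$-martingale with $\bbE[D^{(q)}_t(E)]=\bbE[D^{(q)}_0(E)]=q\mu(E)$, hence it converges a.s.\ to a finite limit equal to $\overline D^{(q)}_\infty(E)$. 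The first point I would verify is the pointwise domination $D^{(q,r)}_t(E)\le D^{(q)}_t(E)$: indeed $A^{(q,r)}_t(x)\subseteq A^{(q)}_t(x)$ because $\rho_r\ge 0$, the two exponential weights agree (both equal $e^{\sqrt{2d}X_t(x)-dK_t(x)}$ since $K_t(x)=\bbE[X_t(x)^2]$), and on $A^{(q,r)}_t(x)$ one has $0<\sqrt{2d}t+q-\rho_r(t)-\bar X_t(x)\le\sqrt{2d}t+q-\bar X_t(x)$, so $Z^{(q,r)}_t(x)\le Z^{(q)}_t(x)$. Next I would apply Proposition~\ref{metaprop} with $Y_n:=D^{(q)}_n(E)$ and $Y^{(r)}_n:=D^{(q,r)}_n(E)$: hypothesis (A) holds because $|Y_n-Y^{(r)}_n|=D^{(q)}_n(E)-D^{(q,r)}_n(E)$ and $\lim_{r\to\infty}\limsup_{t\to\infty}\bbE[D^{(q)}_t(E)-D^{(q,r)}_t(E)]=0$ by \eqref{rlimit}, while hypothesis (B) holds because $(D^{(q,r)}_t(E))_{t\ge 0}$ is bounded in $L^2$ by Proposition~\ref{troncR}, hence uniformly integrable. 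This gives uniform integrability of $(D^{(q)}_n(E))_{n\in\bbN}$, which together with the a.s.\ convergence yields $D^{(q)}_n(E)\to\overline D^{(q)}_\infty(E)$ in $L^1$, hence $\bbE[\overline D^{(q)}_\infty(E)]=q\mu(E)$, i.e.\ \eqref{despe}; passing to the limit $n\to\infty$ in $D^{(q)}_t(E)=\bbE[D^{(q)}_n(E)\mid\cF_t]$ then identifies the martingale as $D^{(q)}_t(E)=\bbE[\overline D^{(q)}_\infty(E)\mid\cF_t]$, so the whole family $(D^{(q)}_t(E))_{t\ge 0}$ is uniformly integrable.

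Since for every $E\in\mathcal B_b$ the convergence $D^{(q)}_t(E)\to\overline D^{(q)}_\infty(E)$ holds a.s.\ towards a finite limit, Proposition~\ref{weako} (applied with $\mu$ playing the role of the dominating measure, using $\bbE[D^{(q)}_t(E)]=q\mu(E)$) provides a random measure $D^{(q)}_\infty$ to which $D^{(q)}_t$ converges weakly a.s., which is locally finite since $\bbE[D^{(q)}_\infty(B(0,R))]=q\mu(B(0,R))<\infty$, and which satisfies $\bbP[D^{(q)}_\infty(E)=\overline D^{(q)}_\infty(E)]=1$ for every $E\in\mathcal B_b$, which is \eqref{expq}. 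For atomlessness, I fix $R>0$ and $\alpha\in(0,\sqrt{2d}-1)$, and for $x_0\in B(0,R)$ and small $\delta>0$ I would write, using \eqref{expq}, \eqref{exprq} and the pointwise domination above (with $\Diam(B(x_0,\delta))=2\delta$),
\begin{equation*}
D^{(q)}_\infty\big(B(x_0,\delta)\big)\le D^{(q,r)}_\infty\big(B(x_0,\delta)\big)+\Xi_r\le C_r\,e^{-\alpha\rho\left(\log\frac{1}{2\delta}\right)}+\Xi_r,
\end{equation*}
where $\Xi_r:=(D^{(q)}_\infty-D^{(q,r)}_\infty)(B(0,R+1))\ge 0$ has $\bbE[\Xi_r]=(q-\eta(q,r))\mu(B(0,R+1))\to 0$ as $r\to\infty$ by \eqref{TCDbess} (recall $\eta(q,r)=q\bQ_q(\forall t>0,\ \beta_t\ge\rho_r(t))$, and use \eqref{despe} and \eqref{toctoctoc}), and $C_r:=\sup_{E\subseteq B(0,R+1)}D^{(q,r)}_\infty(E)e^{\alpha\rho(\log(1/\Diam(E)))}$ is a.s.\ finite by \eqref{unionofsets}. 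Choosing a subsequence $r_k\to\infty$ along which $\Xi_{r_k}\to 0$ a.s.\ and letting $\delta\to 0$ in the bound above --- which is uniform in $x_0\in B(0,R)$ --- gives $D^{(q)}_\infty(\{x_0\})\le\Xi_{r_k}$ for all $k$, hence $D^{(q)}_\infty(\{x_0\})=0$ simultaneously for all $x_0\in B(0,R)$; letting $R\to\infty$ shows $D^{(q)}_\infty$ is atomless.

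The bulk of the analytic work --- the $L^2$ bound on $D^{(q,r)}_t(E)$ --- lives in Proposition~\ref{troncR}, so the present statement is mostly a matter of assembling the preliminaries. The two points that need genuine care are the pointwise inequality $Z^{(q,r)}_t\le Z^{(q)}_t$, which is precisely what makes hypothesis (A) of Proposition~\ref{metaprop} coincide with the already-established estimate \eqref{rlimit}, and the fact that the atomlessness argument must be made uniform in the location $x_0$ of a potential atom, which is why I dominate $D^{(q)}_\infty(B(x_0,\delta))$ by the two quantities $C_r$ and $\Xi_r$ that do not depend on $x_0$. A minor nuisance is that Proposition~\ref{weako}(iii) is phrased with Lebesgue measure as reference, whereas here the natural reference is $\mu$; this causes no difficulty since the proof only uses the domination $\bbE[D^{(q)}_t(E)]\le q\,\mu(E)$.
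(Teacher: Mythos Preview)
Your proof is correct and follows the same strategy as the paper: uniform integrability via Proposition~\ref{metaprop} with hypotheses supplied by \eqref{rlimit} and Proposition~\ref{troncR}, the existence of the limiting measure via Proposition~\ref{weako}, and atomlessness via the approximation by $D^{(q,r)}_\infty$ together with \eqref{unionofsets}. The paper's proof is terser but uses exactly the same ingredients.

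There are two places where you are in fact more careful than the paper. First, Proposition~\ref{metaprop} is stated for integer-indexed sequences, so your detour through $(D^{(q)}_n(E))_{n\in\bbN}$ followed by the martingale representation $D^{(q)}_t(E)=\bbE[\overline D^{(q)}_\infty(E)\mid\cF_t]$ is the honest way to get uniform integrability along the full continuum $t\ge 0$; the paper glosses over this. Second, your atomlessness argument is more explicit: the paper simply asserts that weak convergence $D^{(q,r)}_\infty\to D^{(q)}_\infty$ together with \eqref{unionofsets} yields atomlessness, whereas you spell out the uniform-in-$x_0$ bound and the subsequence along which $\Xi_{r_k}\to 0$ a.s. One small point worth making explicit in your write-up is why $D^{(q)}_\infty-D^{(q,r)}_\infty$ is a nonnegative \emph{measure} (so that $\Xi_r$ dominates the mass of every sub-ball simultaneously): this follows because the pathwise inequality $D^{(q,r)}_t(f)\le D^{(q)}_t(f)$ for all nonnegative $f\in C_c$ survives the a.s.\ weak limits. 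Your closing remark on Proposition~\ref{weako}(iii) needing $\mu$ rather than Lebesgue is correct and applies equally to the paper's own use of that proposition.
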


\noindent Secondly, we deduce from Proposition \ref{uint} the convergence of $D_t$ and prove that the limit satisfies all the stated properties.
We  let $D_\infty$ denote the measure defined by (the limit is well defined by monotonicity)
\begin{equation}
D_{\infty}(E):= \lim_{q\to \infty }D^{(q)}_{\infty}(E).
\end{equation}
With this definition Theorem \ref{derivmartin} can be restated in the following manner.

\begin{proposition}\label{conclu}

The measure $D_\infty$ is a.s.\ locally finite and atomless  and $D_t$ converges weakly to $D_{\infty}$.
We have for any $E\in \mathcal B_b$
\begin{equation}\label{topitope}
\bbP\left[ D_{\infty}(E)=  \bar D_{\infty}(E)\right]=1. 
\end{equation}
and $\bbE[ D_{\infty}(E)]=\infty$ if $\mu(E)>0$.
Furthermore $D_{\infty}$ is atomless and its topological support a.s.\ coincides with that of $\mu$.
\end{proposition}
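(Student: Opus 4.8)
The plan is to derive the proposition entirely from Proposition~\ref{uint} and from items (iii)--(v) of Proposition~\ref{listov}; no new estimate will be needed. Fix an integer $R$ and work on the almost sure event on which $q_0(R)<\infty$, on which $M_t\to 0$ weakly (Proposition~\ref{listov}(iii) together with Proposition~\ref{weako}), on which, for every integer $q$, $D^{(q)}_t$ converges weakly to $D^{(q)}_\infty$ (Proposition~\ref{uint} yields a.s.\ convergence of the non-negative martingale $D^{(q)}_t(E)$ to $\bar D^{(q)}_\infty(E)=D^{(q)}_\infty(E)$ for each fixed $E$, whence weak convergence by Proposition~\ref{weako}), and on which the identities $\bar D^{(q)}_\infty(E)=\bar D_\infty(E)$ of Proposition~\ref{listov}(v) hold for every dyadic cube $E\subset B(0,R)$ and every integer $q\ge q_0(R)$. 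Fix now $E\in\cB_b$ with $E\subset B(0,R)$. For any integer $q\ge q_0(R)$, Proposition~\ref{listov}(iv) gives $D^{(q)}_t(E)=D_t(E)+qM_t(E)$ for all $t$, so letting $t\to\infty$ shows that $\lim_t D_t(E)$ exists and equals $\bar D^{(q)}_\infty(E)$, which by Proposition~\ref{listov}(v) equals $\bar D_\infty(E)$ and is in particular independent of the integer $q\ge q_0(R)$. Hence the limit defining $D_\infty(E)$ exists, $D_\infty(E)=\bar D_\infty(E)$, and $\bbP[D_\infty(E)=\bar D_\infty(E)]=1$; this is \eqref{topitope}, and it also identifies $\lim_tD_t(E)$ with $D_\infty(E)$.

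I would next pass from these fixed-$E$ statements to statements about the measure $D_\infty$. Since $\sqrt{2d}t+q-\bar X_t(x)>0$ on the truncation event $A^{(q)}_t(x)$, the variables $D^{(q)}_t(E)$ are non-decreasing in $q$, hence so are $D^{(q)}_\infty(E)$, and $D_\infty(E)=\sup_qD^{(q)}_\infty(E)$; an increasing limit of measures is again a measure (its $\sigma$-additivity being monotone convergence), so $D_\infty$ is a bona fide random measure. From the a.s.\ identities $D^{(q)}_\infty(E)=\bar D_\infty(E)$ for dyadic cubes $E\subset B(0,R)$ and uniqueness of measures agreeing on a $\pi$-system, the $D^{(q)}_\infty$ stabilize on $B(0,R)$, for $q\ge q_0(R)$, to the restriction of $D_\infty$ there; in particular $D_\infty$ is locally finite and atomless because $D^{(q)}_\infty$ is (Proposition~\ref{uint}). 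For the weak convergence, fix $f\in C_c(\bbR^d)$ supported in $B(0,R)$: for $q\ge q_0(R)$ the identity $D_t(f)=D^{(q)}_t(f)-qM_t(f)$, together with $D^{(q)}_t\rightharpoonup D^{(q)}_\infty$ and $M_t\rightharpoonup 0$, gives $D_t(f)\to\langle D_\infty,f\rangle$, so $D_t\rightharpoonup D_\infty$ weakly a.s. Finally, monotone convergence and \eqref{despe} give $\bbE[D_\infty(E)]=\lim_q\bbE[D^{(q)}_\infty(E)]=\lim_qq\mu(E)=\infty$ whenever $\mu(E)>0$.

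It remains to prove that $D_\infty(E)>0$ a.s.\ whenever $\mu(E)>0$; granting this, the support statement follows, since $D_\infty(U)=0$ for every open $U$ with $\mu(U)=0$ (directly, as $D_t(U)=0$ for all $t$) while running positivity over the countable family of balls with rational centre and radius forces a.s.\ $\mathrm{supp}\,D_\infty=\mathrm{supp}\,\mu$. As $\bbE[D^{(q)}_\infty(E)]=q\mu(E)>0$ gives $\bbP[D_\infty(E)>0]\ge\bbP[D^{(q)}_\infty(E)>0]>0$, it suffices to show $\bbP[D_\infty(E)=0]\in\{0,1\}$, and I expect this zero--one law to be the main obstacle. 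The idea is to observe that, for each fixed $s\ge0$, writing $X_t=X_s+(X_t-X_s)$ for $t\ge s$ and $\mu^{(s)}(\dd x):=e^{\sqrt{2d}X_s(x)-dK_s(x)}\mu(\dd x)$, the quantity $D_\infty(E)$ equals the derivative-martingale limit built from the increment field $(X_t-X_s)_{t\ge s}$ with reference measure $\mu^{(s)}$ — the extra term produced by the expansion pairs the $\cF_s$-measurable, locally bounded factor $\sqrt{2d}K_s(x)-X_s(x)$ with the critical martingale of the increment field, which tends to $0$ as $t\to\infty$ by the analogue of Proposition~\ref{listov}(iii). Because on $B(0,R)$ the density $e^{\sqrt{2d}X_s(x)-dK_s(x)}$ is bounded above and below by positive $\cF_s$-measurable constants, and because on the relevant truncation event — whose level is a.s.\ finite by the analogue of Proposition~\ref{listov}(iv) — the integrand defining the derivative martingale is non-negative, the event $\{D_\infty(E)=0\}$ is unchanged when $\mu^{(s)}$ is replaced by $\mu$; hence it lies, up to a null set, in $\sigma\big((X_t-X_s)_{t\ge s}\big)$ for every $s\ge0$, i.e.\ in the tail $\sigma$-field of the independent-increment field $X$, and Kolmogorov's zero--one law applies. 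The delicate point, specific to the critical regime, is exactly this comparison: since the process $D_t$ is signed one cannot work with it directly, and one must route the argument through the truncated martingales, whose integrands are non-negative, to see that the vanishing of the limit is unaffected by multiplying the reference measure by a comparable density.
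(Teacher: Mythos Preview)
Your proof is correct and follows the same route as the paper: reduce to the truncated martingales via Proposition~\ref{listov}(iv)--(v), inherit local finiteness and atomlessness from $D^{(q)}_\infty$, deduce $\bbE[D_\infty(E)]=\infty$ from \eqref{despe}, and obtain the support statement through a zero--one law for the tail of the increment field.

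Your handling of the zero--one law is actually more careful than the paper's. The paper asserts the equivalence
\[
D_\infty(E)>0 \ \Leftrightarrow\ \lim_{t\to\infty}\int_E e^{\sqrt{2d}X^{(s)}_t(x)-dt}\mu(\dd x)>0,
\]
but the right-hand side is the critical martingale for the increment field and hence converges to $0$ a.s.\ by Proposition~\ref{listov}(iii); as written the equivalence is vacuous, and the intended object is clearly the \emph{derivative} martingale of the increment field --- which is precisely what you use. Your further remark that the signed nature of $D_t$ forces the comparison between $\mu^{(s)}$ and $\mu$ to be routed through the truncated processes $D^{(q)}$, whose integrands are non-negative, is exactly what is needed to make the ``straightforward comparisons of integrals'' the paper invokes actually go through.
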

\noindent We now prove the two above propositions in the order in which they were introduced.

\begin{proof}[Proof of Proposition \ref{uint}]
 
 The uniform integrability of $D^{(q)}_{\infty}(E)$ follows from Proposition \ref{metaprop}. 
 The assumption $(A)$ of Proposition \ref{metaprop} is satisfied using \eqref{rlimit} while $(B)$ (uniform integrability in $t$ of $D^{(q,r)}_t$) is simply given by Proposition \ref{troncR} .
 The weak convergence of $D^{(q)}_t$ can then be deduce using Proposition \ref{weako}.
Note that from \eqref{exprq}, \eqref{expq}, for any $E\in \mathcal B_b$ we have
\begin{equation}
 \bbE\left[D^{(q)}_{\infty}(E)-D^{(q,r)}_\infty(E)\right]= \left(q-\eta(q,r)\right)\mu(E).
\end{equation}
Now since $D^{(q,r)}_\infty(E)$ is increasing in $r$ we obtain the following convergence in $L^1$
\begin{equation}
 D^{(q)}_{\infty}(E)=\lim_{r\to \infty} D^{(q,r)}_{\infty}(E)
\end{equation}
This implies (cf. Proposition \ref{weako})  a.s.\ weak convergence of the measure $D^{(q)}_{\infty}=\lim_{r\to \infty} D^{(q,r)}_{\infty}$ 
and in particular (from \eqref{unionofsets}) that $D^{(q)}_{\infty}$ is atomless.
\end{proof}

\begin{proof}[Proof of Proposition \ref{conclu}]
From item $(v)$ of Proposition \ref{listov}, we obtain that $\overline D_\infty(E)=\lim_{q\to \infty}\overline D^{(q)}_{\infty}(E)$, and that the limit is finite (since the sequence is stationary in $q$). Atomlessness is a consequence of that of $D^{(q)}_{\infty}$.
Combining \eqref{expq}
and Proposition \ref{weako}, we obtain that $D_t$ converges weakly to $D_{\infty}$ as a measure and that \eqref{topitope} holds.
The fact that $\bbE[ D_{\infty}(E)]=\infty$ if $\mu(E)>0$ then follows from \eqref{despe}.

\medskip

The final point is to check that the topological support of $D_{\infty}$ coincides with that of $\mu$.
We simply have to check that if $E\in \mathcal B_b$ is such that $\mu(E)>0$ then $\bbP\left[ D_{\infty}(E)>0\right]=1$
and apply this to a countable base of open sets to conclude.

\medskip

\noindent We need only to show that $\bbP\left[ D_{\infty}(E)>0\right]\in\{0,1\}$ since we already know that $$\bbP\left[ D_{\infty}(E)>0\right]>0$$ ($D_{\infty}(E)$ has infinite expectation). 
This $0-1$ law is a consequence of the fact that the event belongs to the tail $\sigma$-algebra (it is independent of $\mathcal F_s$ for all $s>0$). This can be deduced from the fact (which follows from straightfoward comparisons of integrals) that 
\begin{equation}
 D_{\infty}(E)>0   \quad \Leftrightarrow  \quad \lim_{t\to \infty} \int_E  e^{\sqrt{2d}X^{(s)}_t(x)-d t} \mu(\dd x)>0,
\end{equation}
and the event on the right-hand side is independent of $\mathcal F_s$.
\end{proof}

\subsection{A technical observation concerning conditional expectation}\label{tekteklem}

We state and prove now an important result relating $M^{(q,r)}_t$ to $D^{(q,r)}_s$.
To write the exact relation we need to introduce a slightly modified version of $D^{(q,r)}_t$, by setting (for $E\in \cB_b$). Recalling the notations introduced in Section \ref{gbesti}
\begin{equation}\begin{split}
 \tilde Z^{(q,r)}_s(x)&:= Z^{(q,r)}_s(x) \bQ_{\sqrt{2d} s+q-\rho_{r}(s)- \bar X_s}\left[ \forall u\ge 0,  \beta_u \ge \rho_{r+s}(u) \right],\\
 \tilde D^{(q,r)}_s(E)&:= \int_E   \tilde Z^{(q,r)}_s(x)\mu(\dd x)
\end{split}\end{equation}
Setting $\delta(u):= \bP_0\left[\exists t\ge 0, \beta_t\ge \rho_u(t)  \right]$, the following inequalities are valid for any $x\in \bbR^d$ and 
$E\in \mathcal B_b$,
 \begin{equation}\begin{split}\label{cook}
  (1-\delta(r+s)) Z^{(q,r)}_s(E)&\le  \tilde Z^{(q,r)}_s(E) \le   Z^{(q,r)}_s(E),\\
  (1-\delta(r+s)) D^{(q,r)}_s(E)&\le  \tilde D^{(q,r)}_s(E) \le   D^{(q,r)}_s(E).
  \end{split}
\end{equation}
The second line in \eqref{cook} can be obtained from the first by integrating. For the first one, we simply use the fact that the following probability is decreasing in $a$,
\begin{equation}
 \bQ_{a}\left[ \exists  u\ge 0,  \beta_u \ge \rho_{r}(u) \right].
\end{equation}

   \begin{lemma}\label{lecondit}
 Given $E\in \cB_b,$ for any fixed $s$ the following convergence holds  in $L^2$.
 \begin{equation}\label{wohop}
\lim_{t\to \infty} \bbE\left[ \sqrt{\frac{\pi (t-s)}{2}}  M^{(q,r)}_t(E) \ | \ \cF_s\right]=  \tilde D^{(q,r)}_s(E)
 \end{equation}

\end{lemma}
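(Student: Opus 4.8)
\textbf{Proof plan for Lemma \ref{lecondit}.}

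The plan is to compute the conditional expectation $\bbE[W^{(q,r)}_t(x)\mid\cF_s]$ explicitly for a single point $x$, using the independence of increments of $X$ and the Brownian estimates of Lemma \ref{labesse}, then integrate against $\mu$ and pass to the limit $t\to\infty$. First I would fix $x$ and condition on $\cF_s$. By \eqref{barxx} and the discussion in Section \ref{considerations}, given $\cF_s$ the process $(\bar X_{s+u}(x)-\bar X_s(x))_{u\ge 0}$ is a standard Brownian motion started at $0$, independent of $\cF_s$; write $B_u := X_{s+u}(x)-X_s(x)$. One checks (as in \cite{critix}) that on the event $A^{(q,r)}_s(x)\in\cF_s$ the indicator $\ind_{A^{(q,r)}_t(x)}$ becomes, after conditioning, the indicator that the Brownian motion $B$ stays below the moving barrier $u\mapsto \sqrt{2d}u - (\rho_r(s+u)-\rho_r(s)) + (\sqrt{2d}s+q-\rho_r(s)-\bar X_s(x))$ on $[0,t-s]$ — note $\rho_r(s+u)-\rho_r(s)$ is handled using the concavity bookkeeping of $\rho$, so up to the cosmetic reparametrization the barrier has height $a(x):=\sqrt{2d}s+q-\rho_r(s)-\bar X_s(x)$ at time $0$ and slope $\sqrt{2d}$. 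A Girsanov/Cameron--Martin shift (Proposition \ref{cameronmartinpro}) removing the drift $\sqrt{2d}$ turns the $e^{\sqrt{2d}X_t(x)-dK_t(x)}$ weight into the exponential density factor $e^{\sqrt{2d}X_s(x)-dK_s(x)}$ times the probability, under a centered Brownian motion started at $a(x)$, of staying positive on $[0,t-s]$ (after reflecting the barrier). By Lemma \ref{labesse}(i), that probability equals $\sqrt{2/(\pi(t-s))}\,\mathfrak g_{t-s}(a(x))$, which explains the normalization $\sqrt{\pi(t-s)/2}$.

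The next step: after multiplying by $\sqrt{\pi(t-s)/2}$, the conditional expectation of the integrand at $x$ equals $W^{(q,r)}_s(x)$ times $\mathfrak g_{t-s}(a(x))$ times a correction coming from the fact that the true barrier is $\sqrt{2d}u-\rho_r(s+u)$ rather than $\sqrt{2d}u-\rho_r(s)$, i.e. there is an extra lower envelope constraint $\beta_u\ge \rho_{r+s}(u)$ type term appearing through the Doob--McKean identity \eqref{dmk} when one converts "Brownian motion conditioned to stay above a barrier" into a Bessel-type object. As $t\to\infty$, $\mathfrak g_{t-s}(a(x))\to a(x) = \sqrt{2d}s+q-\rho_r(s)-\bar X_s(x)$ pointwise, and the conditioned-Brownian-bridge factor converges, by Lemma \ref{labesse}(vi), to $\bQ_{a(x)}[\forall u\ge 0,\ \beta_u\ge\rho_{r+s}(u)]$. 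Multiplying these together reproduces exactly $\tilde Z^{(q,r)}_s(x)$ as defined before the lemma, so the pointwise (in $x$) limit of the conditional expectation of the integrand is $\tilde Z^{(q,r)}_s(x)$; integrating against $\mu$ over $E$ gives $\tilde D^{(q,r)}_s(E)$, which is the claimed $L^1$ (in fact a.s., pointwise) limit.

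To upgrade to $L^2$ convergence I would show the family $\{\sqrt{\pi(t-s)/2}\,\bbE[M^{(q,r)}_t(E)\mid\cF_s]\}_{t\ge s}$ is bounded in $L^2$ and then invoke uniform integrability of the squares. The $L^2$ bound comes from Proposition \ref{lateknik}: $\bbE[(\sqrt{\pi(t-s)/2}\,\bbE[M^{(q,r)}_t(E)\mid\cF_s])^2]\le \bbE[(\sqrt{\pi(t-s)/2}\,M^{(q,r)}_t(E))^2]$ by Jensen, and the right-hand side is controlled uniformly in $t$ by integrating \eqref{tek1} over $E^2$ against $\mu\otimes\mu$ (the factor $t(t-u+1)^{-1}$ stays bounded and the $|x-y|$-singularity is integrable precisely by assumption \eqref{clazomp}, using $e^{du-\sqrt{2d}\rho(u)}(u+1)^{-3/2}\le e^{du-\rho(u)}$ for large $u$). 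Combined with the a.s.\ convergence established above, uniform integrability of the squares (which follows from the $L^2$ bound being uniform and the a.s.\ limit being finite, via Proposition \ref{metaprop} applied to the squares, or directly by Vitali) yields convergence in $L^2$. The main obstacle I anticipate is the careful bookkeeping in the first step: correctly tracking how the double truncation barrier $\sqrt{2d}s-\rho_r(s)+q$ transforms under conditioning on $\cF_s$ and under the Cameron--Martin shift, and in particular producing the precise $\rho_{r+s}$ (rather than $\rho_r$) inside the Bessel probability — this is exactly the point where the nested structure of the two truncations is used and where an off-by-a-shift error would be easy to make; everything else is a routine combination of Lemma \ref{labesse} and Proposition \ref{lateknik}.
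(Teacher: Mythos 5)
Your handling of the pointwise computation is correct and matches the paper's: factor out the $\cF_s$-measurable weight $W^{(q,r)}_s(x)$, apply Cameron--Martin to the Brownian motion $X^{(s)}_\cdot(x)$ to remove the drift $\sqrt{2d}$, identify a Brownian barrier problem started from $a(x)=\sqrt{2d}s+q-\rho_r(s)-\bar X_s(x)$, split off the survival factor $\sqrt{2/(\pi(t-s))}\,\mathfrak g_{t-s}(a(x))$ via Lemma~\ref{labesse}(i), and let the conditioned process converge to the Bessel-type probability via Lemma~\ref{labesse}(vi). This reproduces $\tilde Z^{(q,r)}_s(x)$ pointwise, so the a.s.\ part of the argument is essentially the paper's.

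The gap is in the upgrade to $L^2$. You argue: Jensen plus Proposition~\ref{lateknik} give $\sup_t \bbE\bigl[\bigl(\sqrt{\pi(t-s)/2}\,\bbE[M^{(q,r)}_t(E)\mid\cF_s]\bigr)^2\bigr]<\infty$, and you then claim uniform integrability of the squares ``via Proposition~\ref{metaprop} applied to the squares, or directly by Vitali.'' Neither works. A uniform bound in $L^2$ together with a.s.\ convergence does \emph{not} yield $L^2$ convergence (take $Y_t=t\,\ind_{\{U<t^{-2}\}}$ with $U$ uniform: $Y_t\to 0$ a.s., $\bbE[Y_t^2]\equiv 1$, but $\bbE[(Y_t-0)^2]\not\to 0$); Vitali requires uniform integrability of the squared differences, which is the very thing you need to establish; and Proposition~\ref{metaprop} needs an explicit approximating family $Y^{(r)}_n$ with property (A), which you have not produced. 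The paper closes this instead by the $t$-independent pointwise domination that you already computed but did not exploit: Lemma~\ref{labesse}(i) gives $\mathfrak g_{t-s}(a)\le a$, hence
\begin{equation*}
\bbE\Bigl[\sqrt{\tfrac{\pi(t-s)}{2}}\,W^{(q,r)}_t(x)\,\Big|\,\cF_s\Bigr]\le Z^{(q,r)}_s(x),
\end{equation*}
so after integrating over $E$ the whole family of conditional expectations is sandwiched (together with the limit) by the fixed random variable $Z^{(q,r)}_s(E)$. For fixed $s$ and bounded $E$ this variable has all moments finite (it is controlled by $e^{\sqrt{2d}\sup_{x\in\bar E}|X_s(x)|}$, a sub-Gaussian supremum), so $(Z^{(q,r)}_s(E))^2$ is an integrable dominator and ordinary dominated convergence gives the $L^2$ limit. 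You should replace your Jensen/Vitali step by this dominated-convergence argument; the ingredient was already in your write-up, you just used it to extract the $\mathfrak g_{t-s}(a)\to a$ limit rather than as a uniform bound.
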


\begin{proof}
 It is sufficient to show that for any $x\in E$ we almost surely have
 \begin{equation}\label{gross}
 \lim_{t\to \infty} \bbE\left[ \sqrt{\frac{\pi (t-s)}{2}}  W^{(q,r)}_t(x) \ | \ \cF_s\right]=\tilde Z^{(q,r)}_s(x),
 \end{equation}
and that
 \begin{equation}\label{gross2}
  \bbE\left[ \sqrt{\frac{\pi (t-s)}{2}}  W^{(q,r)}_t(x) \ | \ \cF_s\right]\le  Z^{(q,r)}_s(x).
 \end{equation}
Indeed using  the dominated convergence (\eqref{gross2} being used for the domination), we can deduce deduce 
from \eqref{gross} that the convergence \eqref{wohop} holds in the almost-sure sense.
For the convergence in $L^2$,
we can use dominated convergence again (with $(Z^{(q,r)})^2$ being used for domination) to obtain
\begin{equation}
 \lim_{t\to \infty}\bbE\left[  \left(\tilde Z^{(q,r)}_s(E)- \bbE\left[ \sqrt{\frac{\pi (t-s)}{2}}  M^{(q,r)}_t(E) \ | \ \cF_s\right]\right)^2 \right]=0.
 \end{equation}
Let us now proceed with the proof of \eqref{gross} and \eqref{gross2}.
Omitting the dependence in $x$  for readability, factoring out  $W^{(q,r)}_s$ (which is $\cF_s$ measurable) and 
then using Cameron-Martin's formula for the Brownian Motion $(X^{(s)}_{\cdot}(x))$  we obtain that
\begin{equation}\label{laconvmone}\begin{split}
  \bbE\left[  W^{(q,r)}_t \ | \ \cF_s\right]&= W^{(q,r)}_s  \bE\left[e^{\sqrt{2d}X^{(s)}_{t-s}-d (t-s)}\ind_{\{ \forall u\in [0,t-s], \ X^{(s)}_{u} 
  \le  \sqrt{2d}(s+u)- \rho_{r}(s+u)+q-\bar X_s\}} 
   \ | \ \cF_s \right]\\
\\&=W^{(q,r)}_s \bP\left[  \sup_{u\in [0,t-s]} X^{(s)}_{u} \le  \sqrt{2d} s+q-\rho_{r}(s+u)- \bar X_s  \ | \ \cF_s \right]
\\&=W^{(q,r)}_s \bP_{ \sqrt{2d} s+q-\rho_{r}(s)- \bar X_s}\left[ \forall u\in [0,t-s],\ B_u\ge \rho_{r+s}(u)  \right].
\end{split}\end{equation}
To conclude, the proof of \eqref{gross}-\eqref{gross2}, we apply item $(i)$ from Lemma \ref{labesse} with $a=\sqrt{2d} s+q-\rho_{r}(s)- \bar X_s$ (and replacing $t$ by $t-s$).

\end{proof}

 \subsection{Proof of Proposition \ref{troncR}}\label{thundertem}
We take $s$ sufficiently large so that $\delta(s)\ge 1/2$.
Hence we have
\begin{multline}
\bbE[ D^{(q,r)}_s(E)^2]\le 4 \bbE\left[\tilde D^{(q,r)}_s(E)^2\right]\\
\le \lim_{t\to \infty} 2\pi t \bbE\left[\bbE\left[ M^{(q,r)}_t(E) \ | \ \cF_s \right]^2\right]
\le\limsup_{t\to \infty}2\pi t \bbE\left[ M^{(q,r)}_t(E)^2\right].
\end{multline}
Using Proposition \ref{lateknik} (item $(i)$ in the consequences) we obtain that for any $E\in B(0,R)$
\begin{equation}\label{trook}
 \bbE[ \bar D^{(q,r)}_\infty(E)^2]\le  \limsup_{s\to \infty} \bbE[ D^{(q,r)}_s(E)^2]\le \nu_R(B(0,R)) e^{-(\sqrt{2d}-1) \rho(1/\Diam(E))}<\infty
\end{equation}
This bound on the second moment implies uniform integrability. Hence using Proposition \ref{listov} item $(ii)$ we obtain 
\begin{equation}
  \bbE[ \bar D^{(q,r)}_\infty(E)]=\lim_{t\to \infty} \bbE\left[D^{(q,r)}_t(E) \right]=\mu(E)\eta(q,r).
\end{equation}
Finally to check that \eqref{unionofsets} holds, it is sufficient to show that the supremum over 
all dyadic cubes of the form $E_{n,x}=2^{-n}([0,1]^d+x)$ with $E_{n,x}\subset  B(0,2 R)$ is finite.
We have 
\begin{equation}\begin{split}
\sum_{n\ge 1}\bbP\Big[\exists x\in  \bbZ^d,& E_{n,x}\subset  B(0,2 R) \text{ and }  D^{(q,r)}_{\infty}(E_{n,x})\ge e^{-\alpha \rho\left(n\log 2\right)}  \Big]\\
&\le \sum_{n\ge 1} \sum_{\{x\in\bbZ^d  :  E_{n,x}\subset  B(0,2 R)\}}   \bbP\left[  D^{(q,r)}_{\infty}(E_{n,x})\ge e^{-\alpha \rho\left( n\log 2\right)}  \right]\\
&\le \sum_{n\ge 1} \sum_{\{x\in\bbZ^d  :  E_{n,x}\subset  B(0,2 R)\}}  e^{\alpha \rho\left( n\log 2\right)}  \bbP\left[  D^{(q,r)}_{\infty}(E_{n,x})^2\right]\\
&\le \sum_{n\ge 1} \sum_{\{x\in\bbZ^d  :  E_{n,x}\subset  B(0,2 R)\}}  e^{-(\sqrt{2}-1-\alpha) \rho\left( n\log 2\right)} \nu_{2R}(E_{n,x})\\
&\le \nu_{2R}(B(0,2R))\sum_{n\ge 1} e^{-(\sqrt{2}-1-\alpha) \rho\left( n\log 2\right)}.
\end{split}
\end{equation}
In the penultimate inequality we have used \eqref{trook}.
In the last line we have used the assumption that $\nu_{2R}$ is a finite measure, and that $\rho(u)\ge u^{1/4}$ for large values of $u$ (an assumption made at the very beginning of Section \ref{prelimlim}  and that we  use only here).
\qed

\section{The convergence of $M_t$} \label{caseofmt}

\noindent The main statement proved in this section is the following.
\begin{proposition}\label{dor}
For any $E\in \mathcal B_b$, we have the following convergence in $L^2$ for every $q,r\ge 1$
 \begin{equation}\label{potipoti}
 \lim_{t\to \infty} \sqrt{\frac{\pi t}{2}} M^{(q,r)}_t(E)= \bar D^{(q,r)}_{\infty}(E).
 \end{equation}
 As a consequence, the following convergence holds in $L^1$ for every $q\ge 1$
\begin{equation}\label{convtroncq}
   \lim_{t\to \infty} \sqrt{\frac{\pi t}{2}} M^{(q)}_t(E)= \bar D^{(q)}_{\infty}(E),
 \end{equation}
and the following convergence in probability
\begin{equation}\label{finally}
    \lim_{t\to \infty} \sqrt{\frac{\pi t}{2}} M_t(E)= \bar D_{\infty}(E).
\end{equation}

\end{proposition}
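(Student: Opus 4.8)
The plan is to establish the three convergences in the order $(\ref{potipoti}) \Rightarrow (\ref{convtroncq}) \Rightarrow (\ref{finally})$, exploiting the truncation machinery and the results already proved for the derivative martingale. The central statement is the $L^2$-convergence $(\ref{potipoti})$; once this is in hand the other two follow by soft arguments. First I would prove $(\ref{potipoti})$. Fix $q,r\ge 1$ and $E\in\mathcal B_b$; by additivity and Proposition \ref{lateknik}(ii) it suffices to treat $E$ of small diameter. The strategy is the standard one for showing $L^2$-convergence of a sequence to a target: show that $\sqrt{\pi t/2}\,M^{(q,r)}_t(E)$ is bounded in $L^2$ (this is exactly Proposition \ref{lateknik}(i)/Proposition \ref{troncR}), show that its conditional expectation given $\mathcal F_s$ converges to $\tilde D^{(q,r)}_s(E)$ as $t\to\infty$ (this is Lemma \ref{lecondit}), and show that $\tilde D^{(q,r)}_s(E) \to \bar D^{(q,r)}_\infty(E)$ in $L^2$ as $s\to\infty$. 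For the last point I would combine the sandwich $(\ref{cook})$ with $\delta(r+s)\to 0$ and the $L^2$-convergence of $D^{(q,r)}_s(E)$ to $D^{(q,r)}_\infty(E)=\bar D^{(q,r)}_\infty(E)$ from Proposition \ref{troncR}. Then a standard Hilbert-space argument closes it: writing $Y_t := \sqrt{\pi t/2}\,M^{(q,r)}_t(E)$ and $Y_\infty:=\bar D^{(q,r)}_\infty(E)$, one has $\bbE[(Y_t-Y_\infty)^2] = \bbE[Y_t^2] - 2\bbE[Y_t Y_\infty] + \bbE[Y_\infty^2]$; conditioning on $\mathcal F_s$ and using that $\bbE[Y_t\mid\mathcal F_s]\to \tilde D^{(q,r)}_s(E)$ together with the uniform $L^2$ bound controls the cross term in the limit $t\to\infty$ then $s\to\infty$, and $\limsup_t \bbE[Y_t^2]\le \bbE[Y_\infty^2]$ by Fatou-type reasoning plus the conditional computation, while $\liminf_t\bbE[Y_t^2]\ge\bbE[Y_\infty^2]$ by Fatou applied along an a.s.-convergent subsequence; squeezing gives convergence in $L^2$.

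Next, $(\ref{convtroncq})$: since $M^{(q)}_t(E) - M^{(q,r)}_t(E)\ge 0$ and, by Proposition \ref{listov}(ii), $\sqrt t\,\bbE[M^{(q)}_t(E)-M^{(q,r)}_t(E)]$ is small uniformly in $t$ for $r$ large, the family $\sqrt{\pi t/2}\,M^{(q)}_t(E)$ is a uniform $L^1$-approximation of $\sqrt{\pi t/2}\,M^{(q,r)}_t(E)$. Passing $t\to\infty$ in $(\ref{potipoti})$ and then $r\to\infty$, and using that $\bar D^{(q,r)}_\infty(E)\uparrow \bar D^{(q)}_\infty(E)$ in $L^1$ (shown in the proof of Proposition \ref{uint}), yields $(\ref{convtroncq})$ in $L^1$. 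Finally, $(\ref{finally})$: fix $R$ with $E\subset B(0,R)$ and use Proposition \ref{listov}(iv): on the event $\cA^{(q)}_R$, which has probability tending to $1$ as $q\to\infty$, one has $M^{(q)}_t(E)=M_t(E)$ for all $t$, and likewise $\bar D^{(q)}_\infty(E) = \bar D_\infty(E)$ by Proposition \ref{listov}(v). Hence for any $\varepsilon>0$,
\begin{equation*}
\bbP\left[\left|\sqrt{\tfrac{\pi t}{2}}M_t(E)-\bar D_\infty(E)\right|>\varepsilon\right] \le \bbP\left[(\cA^{(q)}_R)^c\right] + \bbP\left[\left|\sqrt{\tfrac{\pi t}{2}}M^{(q)}_t(E)-\bar D^{(q)}_\infty(E)\right|>\varepsilon\right],
\end{equation*}
and letting $t\to\infty$ (using $(\ref{convtroncq})$) then $q\to\infty$ gives the convergence in probability.

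The main obstacle I anticipate is the rigorous control of the cross term $\bbE[Y_t Y_\infty]$ in the $L^2$ argument for $(\ref{potipoti})$, i.e.\ genuinely upgrading the conditional-expectation convergence of Lemma \ref{lecondit} plus the $L^2$-bounds into an honest $L^2$-limit rather than mere $L^2$-boundedness with a.s.\ convergence along subsequences. The clean way is: for fixed $s$, $\bbE[Y_t Y_\infty] = \bbE[\bbE[Y_t\mid\mathcal F_s]\,\bbE[Y_\infty\mid\mathcal F_s]]$ is not quite right since $Y_\infty$ need not be $\mathcal F_s$-measurable in a way that factorizes, so instead one writes $\bbE[Y_t Y_\infty] = \bbE[Y_t \bbE[Y_\infty\mid\mathcal F_s]] + \bbE[Y_t (Y_\infty - \bbE[Y_\infty\mid\mathcal F_s])]$, bounds the second term by Cauchy--Schwarz using the uniform $L^2$-bound on $Y_t$ and the martingale convergence $\bbE[Y_\infty\mid\mathcal F_s]\to Y_\infty$ in $L^2$ as $s\to\infty$, and for the first term uses $\bbE[Y_t\mid\mathcal F_s]\to \tilde D^{(q,r)}_s(E)$ in $L^2$ (Lemma \ref{lecondit}) together with $\tilde D^{(q,r)}_s(E)\to Y_\infty$ in $L^2$. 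Getting the order of limits ($t\to\infty$, then $s\to\infty$) and the uniform-in-$t$ $L^2$-bound to cooperate is the delicate bookkeeping; everything else is routine given Propositions \ref{listov}, \ref{lateknik}, \ref{troncR} and Lemma \ref{lecondit}.
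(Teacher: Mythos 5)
Your treatment of \eqref{convtroncq} and \eqref{finally} is correct and essentially identical to the paper's. The problem is in the proof of \eqref{potipoti}: the three facts you enumerate --- $L^2$-boundedness of $Y_t:=\sqrt{\pi t/2}\,M^{(q,r)}_t(E)$, the convergence $\bbE[Y_t\mid\cF_s]\to\tilde D^{(q,r)}_s(E)$ in $L^2$ as $t\to\infty$ (Lemma \ref{lecondit}), and $\tilde D^{(q,r)}_s(E)\to\bar D^{(q,r)}_\infty(E)$ in $L^2$ as $s\to\infty$ --- do \emph{not} imply $L^2$-convergence of $Y_t$ to $Y_\infty$. A toy counterexample: take $Y_t = Y_\infty + G_t$ with $(G_t)$ i.i.d.\ standard normals independent of the filtration $(\cF_s)$; then all three of your bullet points hold, yet $\bbE[(Y_t-Y_\infty)^2]=1$ for all $t$. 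Your expansion $\bbE[(Y_t-Y_\infty)^2]=\bbE[Y_t^2]-2\bbE[Y_tY_\infty]+\bbE[Y_\infty^2]$ is fine for the cross term, but the term $\limsup_t\bbE[Y_t^2]\le\bbE[Y_\infty^2]$ that you invoke is precisely the nontrivial estimate, and ``Fatou-type reasoning plus the conditional computation'' does not produce it: the orthogonal decomposition gives $\bbE[Y_t^2]=\bbE[\bbE[Y_t\mid\cF_s]^2]+\bbE[(Y_t-\bbE[Y_t\mid\cF_s])^2]$, so you need to show the conditional variance $\frac{\pi t}{2}\bbE\big[\big(M^{(q,r)}_t(E)-\bbE[M^{(q,r)}_t(E)\mid\cF_s]\big)^2\big]$ tends to zero as $t\to\infty$ then $s\to\infty$, and nothing in your outline addresses this. (Your $\liminf_t\bbE[Y_t^2]\ge\bbE[Y_\infty^2]$ via ``Fatou along an a.s.-convergent subsequence'' also presupposes an a.s.-convergent subsequence to $Y_\infty$, which is not available a priori.)

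This conditional-variance bound is the technical core of the paper's argument and cannot be bypassed. The paper writes $\xi_{s,t}(x):=W^{(q,r)}_t(x)-\bbE[W^{(q,r)}_t(x)\mid\cF_s]$, expands $\bbE[(M^{(q,r)}_t(E)-\bbE[M^{(q,r)}_t(E)\mid\cF_s])^2]=\int_{E^2}\bbE[\xi_{s,t}(x)\xi_{s,t}(y)]\,\mu(\dd x)\mu(\dd y)$, observes that the finite-range dependence of the increments $X^{(s)}_\cdot$ kills $\bbE[\xi_{s,t}(x)\xi_{s,t}(y)]$ whenever $|x-y|\ge e^{-s}$, bounds the remaining near-diagonal contribution by $\bbE[W^{(q,r)}_t(x)W^{(q,r)}_t(y)]$, and then invokes the moment estimate \eqref{whendelta} of Proposition \ref{lateknik}(ii) to make the near-diagonal integral small once $s$ is large. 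That is where the assumption \eqref{clazomp} on $\mu$ and the truncation design actually enter. Without this step your proof of \eqref{potipoti} does not close, and since \eqref{convtroncq} and \eqref{finally} are derived from \eqref{potipoti}, the whole proposition remains unproved.
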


\begin{proof}[Proof of Theorem \ref{critmartin}]
 As a consequence of \eqref{convtroncq}, we obtain from Proposition \ref{weako} that $M^{(q)}_t$ converges weakly in probability towards $D^{(q)}_{\infty}$. Since - by Proposition \ref{listov} item (iv) - given $R>0$, for $q$ sufficiently large the respective restrictions of $M_t$  and $D_{\infty}$ to  $B(0,R)$ coincide with that of $M_t^{(q)}$ and  $D^{(q)}_\infty$  (for all $t$), $M_t$ converges weakly to $D_{\infty}$.
\end{proof}

\begin{proof}[Proof of Proposition \ref{dor}] Let us start with the proof of \eqref{potipoti}. We drop $E$ from the notation for better readability.
 Since $D^{(q,r)}_s$ converge in $L^2$, it is sufficient to prove that 
 \begin{equation}
 \lim_{s\to \infty} \limsup_{t\to \infty}\bbE\left[  \left|\sqrt{\frac{\pi t}{2}} M^{(q,r)}_t-D^{(q,r)}_s\right|^2 \right]=0.
 \end{equation}
Using the conditional expectation to make an orthogonal decomposition in $L^2$ we have
\begin{multline}\label{ladekomp}
 \bbE\left[  \left|\sqrt{\frac{\pi t}{2}} M^{(q,r)}_t-D^{(q,r)}_s\right|^2 \right]\\=
 \frac{\pi t}{2} \bbE\left[  \left| M^{(q,r)}_t-\bbE\left[  M^{(q,r)}_t \ | \ \cF_s \right]\right|^2 \right]
+ \bbE \! \left[  \! \left( \!
\bbE \! \left[ \! \sqrt{\frac{\pi t}{2}} M^{(q,r)}_t \ | \ \cF_s\right]- D^{(q,r)}_s \!  \right)^2 \! \right].
\end{multline}
From Lemma \ref{lecondit}, we have 
\begin{equation}
\lim_{s\to \infty}\lim_{t\to \infty}  \bbE \! \left[  \! \left( \!
\bbE \! \left[ \! \sqrt{\frac{\pi t}{2}} M^{(q,r)}_t \ | \ \cF_s\right]- D^{(q,r)}_s \!  \right)^2 \! \right]\\
=\lim_{s\to \infty} \bbE \! \left[ \left(\! \tilde D^{(q,r)}_s- D^{(q,r)}_s \!  \right)^2 \! \right]=0.
\end{equation}
To control the first term,  we set  (recall \eqref{trunk2})
$$  \xi_{s,t}(x):= W^{(q,r)}_{t}(x)- \bbE\left[ W^{(q,r)}_{t}(x) \ | \ \cF_s \right].$$
Expanding the square we obtain
\begin{equation}\label{xixi}
\bbE\left[  \left| M^{(q,r)}_t-\bbE\left[  M^{(q,r)}_t \ | \ \cF_s \right]\right|^2 \right] =\int_{E^2} \bbE\left[ \xi_{s,t}(x) \xi_{s,t}(y)   \right] \mu(\dd x) \mu(\dd y).
\end{equation}
Note that $\xi_{s,t}(x)$ is measurable with respect to $\cF_s\vee \sigma( (X^{(s)}_u(x))_{u\ge 0})$. If $|x-y|\ge e^{-s}$ then  $X^{(s)}_{\cdot}(x)$ and $X^{(s)}_{\cdot}(y)$  are independent and independent of $\cF_s$. This implies  conditional independence of $\xi_{s,t}(x)$ and  $\xi_{s,t}(y)$ and hence
\begin{equation}\label{czer}
\bbE\left[ \xi_{s,t}(x) \xi_{s,t}(y) \ | \ \cF_s  \right]= \bbE\left[ \xi_{s,t}(x) \ | \ \cF_s  \right] \bbE\left[ \xi_{s,t}(y)  \ | \ \cF_s \right]=0.
\end{equation}
On the other hand when $|x-y|\le e^{-s}$, we have 
\begin{equation}\label{cpzer}
 \bbE\left[ \xi_{s,t}(x) \xi_{s,t}(y)\right]\le 
  \bbE\left[ W^{(q,r)}_{t}(x) W^{(q,r)}_{t}(y)\right].
\end{equation}
Thus combining \eqref{czer} and \eqref{cpzer} we have
\begin{multline}\label{czer3}
\bbE\left[  \left| M^{(q,r)}_t-\bbE\left[  M^{(q,r)}_t \ | \ \cF_s \right]\right|^2 \right]\\
\le \int_{E^2}  \bbE\left[ W^{(q,r)}_{t}(x) W^{(q,r)}_{t}(y)   \right]  \ind_{\{|x-y|\le e^{-s}\} } \mu(\dd x)\mu( \dd y).
\end{multline}
and we can conclude using Proposition \ref{lateknik} (more precisely \eqref{whendelta}) that 
\begin{equation}
\lim_{s\to \infty} \limsup_{t\to \infty}\bbE\left[  \left| M^{(q,r)}_t-\bbE\left[  M^{(q,r)}_t \ | \ \cF_s \right]\right|^2 \right]=0.
\end{equation}
Let us now prove \eqref{convtroncq}. Removing $E$ from the equation for the sake of readability, we have
\begin{multline}
 \bbE\left[ \left| \sqrt{\frac{\pi t}{2}} M^{(q)}_t- \bar D^{(q)}_{\infty} \right|\right]=
\sqrt{\frac{\pi t}{2}}  \bbE\left[  M^{(q)}_t- \bar M^{(q,r)}_{\infty} \right]
\\ 
+\bbE\left[ \left| \sqrt{\frac{\pi t}{2}} M^{(q,r)}_t- \bar D^{(q,r)}_{\infty} \right|\right]+ \bbE\left[ \left| \bar D^{(q)}_\infty- \bar D^{(q,r)}_{\infty} \right|\right]
\end{multline}
Using  \eqref{rlimit} (from  Proposition \ref{listov}) we see that the $\limsup$ for the first and third term (using Fatou) can be made arbitrarily small by taking $r$ large, while the second term tends to zero from \eqref{potipoti}.
Finally \eqref{finally} can be obtained from \eqref{convtroncq} simply using Proposition \ref{listov} (iv)-(v).

\end{proof}

\section{The convergence of $M_{\gep}$}\label{caseofmgep}

\subsection{The main statement and its proof}

\noindent The strategy of the previous section can be adapted to prove the convergence of $M^{}_{\gep}$. 
We  show that $\sqrt{{\pi \log(1/\gep)}/{2}} M^{(q)}_\gep(E)$ converges to the same limit as $D^{(q)}_t(E)$ and $M^{(q)}_t(E)$.

\begin{proposition}\label{gepqgep}
 For any $E\in \mathcal B_b$ and $q\ge 0$, `we have the following convergence in $L_2$ for every $q,r\ge 1$
 \begin{equation}
 \lim_{\gep\to 0} \sqrt{\frac{\pi \log(1/\gep)}{2}} M^{(q,r)}_\gep(E)= \bar D^{(q,r)}_{\infty}(E).
 \end{equation}
 As a consequence the following convergence holds for every $q$ in $L^1$
 \begin{equation}
   \lim_{\gep\to 0} \sqrt{\frac{\pi \log(1/\gep)}{2}} M^{(q)}_\gep(E)=\bar D^{(q)}_{\infty}(E),
 \end{equation}
and the following convergence in probability
\begin{equation}\label{finally2}
    \lim_{\gep\to 0} \sqrt{\frac{\pi t}{2}} M_t(E)= \bar D_{\infty}(E).
\end{equation}
\end{proposition}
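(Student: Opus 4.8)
The plan is to mimic the proof of Proposition~\ref{dor} at the structural level, replacing the martingale increment $(X^{(s)}_{u}(x))_{u\ge 0}$, wherever it enters the \emph{weight}, by the mollified increment $(X_{\gep}-X_{s,\gep})(x)$ (the barrier event $A^{(q,r)}_{t_{\gep}}(x)$, being phrased through $\bar X$, still involves only times up to $t_{\gep}:=\log(1/\gep)$). Writing $t_{\gep}=\log(1/\gep)$ and dropping $E$, since $D^{(q,r)}_{s}$ converges in $L^{2}$ it suffices to prove
\[
\lim_{s\to\infty}\limsup_{\gep\to0}\bbE\Big[\big|\sqrt{\tfrac{\pi t_{\gep}}{2}}\,M^{(q,r)}_{\gep}-D^{(q,r)}_{s}\big|^{2}\Big]=0 ,
\]
and by the $L^{2}$ orthogonal decomposition with respect to $\cF_{s}$ (as in \eqref{ladekomp}) this breaks into the ``bias'' term $\bbE[(\bbE[\sqrt{\pi t_{\gep}/2}\,M^{(q,r)}_{\gep}\mid\cF_{s}]-D^{(q,r)}_{s})^{2}]$ and the ``conditional variance'' term $\tfrac{\pi t_{\gep}}{2}\bbE[|M^{(q,r)}_{\gep}-\bbE[M^{(q,r)}_{\gep}\mid\cF_{s}]|^{2}]$.

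For the bias term the key step is the $\gep\to0$ analogue of Lemma~\ref{lecondit}: for every fixed $s$, $\bbE[\sqrt{\pi t_{\gep}/2}\,M^{(q,r)}_{\gep}\mid\cF_{s}]\to\tilde D^{(q,r)}_{s}$ in $L^{2}$. To obtain it one factors out of $\bbE[W^{(q,r)}_{\gep}(x)\mid\cF_{s}]$ the $\cF_{s}$-measurable piece $e^{\sqrt{2d}X_{s,\gep}(x)-dK_{s,\gep}(x)}\ind_{A^{(q,r)}_{s}(x)}$, which tends to $W^{(q,r)}_{s}(x)$ as $\gep\to0$ by continuity of $X_{s}$, and applies the Cameron--Martin formula (Proposition~\ref{cameronmartinpro}) to the tilt by $e^{\sqrt{2d}(X_{\gep}-X_{s,\gep})(x)-d(K_{\gep}(x)-K_{s,\gep}(x))}$. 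Using \eqref{thelogbound} and its analogue for $\bar K_{t,\gep,0}$, one checks that under this tilt the process $(X^{(s)}_{u}(x))_{u\le t_{\gep}-s}$ has its usual covariance $u\wedge v$ and mean $\sqrt{2d}\,u+e_{\gep}(u)$, where $e_{\gep}$ is mollifier-dependent, uniformly $O(1)$, tends to $0$ for each fixed $u$ as $\gep\to0$, and is non-negligible only for $u$ close to $t_{\gep}-s$. The argument of Lemma~\ref{lecondit} --- item~(i) of Lemma~\ref{labesse} for the $t_{\gep}^{-1/2}$ prefactor and items~(v)--(vi) for the barrier factor --- then still produces $\tilde Z^{(q,r)}_{s}(x)$ in the limit, because a Brownian path conditioned to stay below the (at most $O(1)$-perturbed) barrier grows like $\sqrt{u}$ and is therefore insensitive to $e_{\gep}$ at the late times where it matters; this is exactly the content of the Dvoretzky--Erd\H{o}s criterion packaged in Lemma~\ref{labesse}(iv). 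The domination $\bbE[\sqrt{\pi t_{\gep}/2}\,W^{(q,r)}_{\gep}(x)\mid\cF_{s}]\le C\,Z^{(q,r)}_{s}(x)$ (again from Lemma~\ref{labesse}(i)), dominated convergence and \eqref{cook} then upgrade the pointwise limit to the stated $L^{2}$ convergence and show the bias term vanishes as $s\to\infty$.

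For the conditional variance, set $\xi_{s,\gep}(x):=W^{(q,r)}_{\gep}(x)-\bbE[W^{(q,r)}_{\gep}(x)\mid\cF_{s}]$, which is measurable with respect to $\cF_{s}\vee\sigma\big((X_{\gep}-X_{s,\gep})(x),(X^{(s)}_{u}(x))_{u\ge0}\big)$. The finite-range dependence recalled in Section~\ref{considerations} gives $\bbE[\xi_{s,\gep}(x)\xi_{s,\gep}(y)\mid\cF_{s}]=0$ whenever $|x-y|\ge e^{-s}+2\gep$, while for $|x-y|<e^{-s}+2\gep$ one bounds $\bbE[\xi_{s,\gep}(x)\xi_{s,\gep}(y)]\le\bbE[W^{(q,r)}_{\gep}(x)W^{(q,r)}_{\gep}(y)]$; hence for $\gep\le e^{-s}$,
\[
\tfrac{\pi t_{\gep}}{2}\,\bbE\big[|M^{(q,r)}_{\gep}-\bbE[M^{(q,r)}_{\gep}\mid\cF_{s}]|^{2}\big]\ \le\ C\!\int_{E^{2}}\bbE\big[t_{\gep}\,W^{(q,r)}_{\gep}(x)W^{(q,r)}_{\gep}(y)\big]\,\ind_{\{|x-y|\le 3e^{-s}\}}\,\mu(\dd x)\,\mu(\dd y),
\]
which tends to $0$ as $s\to\infty$ by \eqref{whendelta}. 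This proves the first assertion of the proposition. The two consequences then follow exactly as in Proposition~\ref{dor}: for the $L^{1}$ convergence of $\sqrt{\pi t_{\gep}/2}\,M^{(q)}_{\gep}$ one bounds $\bbE|\sqrt{\pi t_{\gep}/2}\,M^{(q)}_{\gep}-\bar D^{(q)}_{\infty}|$ by $\sqrt{\pi t_{\gep}/2}\,\bbE[M^{(q)}_{\gep}-M^{(q,r)}_{\gep}]+\bbE|\sqrt{\pi t_{\gep}/2}\,M^{(q,r)}_{\gep}-\bar D^{(q,r)}_{\infty}|+\bbE|\bar D^{(q)}_{\infty}-\bar D^{(q,r)}_{\infty}|$ and lets $\gep\to0$, then $r\to\infty$, using the third line of \eqref{rlimit}, the $L^{2}$ convergence just proved, and the $L^{1}$ identity $\bar D^{(q)}_{\infty}=\lim_{r}\bar D^{(q,r)}_{\infty}$ from the proof of Proposition~\ref{uint}; and the convergence in probability of $\sqrt{\pi t_{\gep}/2}\,M_{\gep}(E)$ to $\bar D_{\infty}(E)$, together with the weak convergence via Proposition~\ref{weako}, follows from Proposition~\ref{listov}(iv)--(v), which for $q\ge q_{0}(R)$ and $E\subset B(0,R)$ identifies $M^{(q)}_{\gep}(E)$ with $M_{\gep}(E)$ and $\bar D^{(q)}_{\infty}(E)$ with $\bar D_{\infty}(E)$, upon letting $q\to\infty$.

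I expect the main obstacle to be exactly the $\gep\to0$ analogue of Lemma~\ref{lecondit}, and inside it the verification that the mollifier-dependent $O(1)$ corrections $e_{\gep}$ --- stemming from the fact that $X_{\gep}-X_{s,\gep}$ is the $\gep$-mollification of the increment field and not the genuine increment $X^{(s)}_{t_{\gep}-s}(x)$, so that they perturb both the Girsanov drift and the effective barrier --- do not affect the $t_{\gep}^{-1/2}$-asymptotics of $\bbE[W^{(q,r)}_{\gep}(x)\mid\cF_{s}]$. This is where the uniform bounds \eqref{thelogbound}, the second moment estimate \eqref{tek2} and the Bessel-process comparisons of Lemma~\ref{labesse}(v)--(vi) are genuinely used, and it is also the step responsible for the mollifier-independence of the limit asserted in Theorem~\ref{mainres}.
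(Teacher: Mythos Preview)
Your approach is essentially identical to the paper's: the same $L^{2}$ orthogonal decomposition, the same finite-range argument for the conditional-variance term (the paper uses the cutoff $|x-y|\ge e^{-s}+2\gep$, your $3e^{-s}$ for small $\gep$ is equivalent), and the same derivation of the two consequences from \eqref{rlimit} and Proposition~\ref{listov}(iv)--(v). The paper isolates your bias-term analysis as a separate Lemma~\ref{lecondit2}, whose proof matches the sketch you give (factorization, Cameron--Martin, and the observation that the mollifier correction $\sqrt{2d}(u-K^{(s)}_{u,\gep,0})$ is uniformly $O(1)$ and vanishes on compacts).

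One small technical point: the pointwise domination $\bbE[\sqrt{\pi t_{\gep}/2}\,W^{(q,r)}_{\gep}(x)\mid\cF_{s}]\le C\,Z^{(q,r)}_{s}(x)$ you invoke does not hold as stated, since the $\cF_{s}$-measurable prefactor is $e^{\sqrt{2d}X_{s,\gep}(x)-dK_{s,\gep}(x)}\ind_{A^{(q,r)}_{s}(x)}$ and involves $X_{s,\gep}$ rather than $X_{s}$, so it is not controlled by a fixed multiple of $W^{(q,r)}_{s}(x)$. The paper circumvents this by proving separately that $W^{(q,r)}_{s,\gep}(x)\to W^{(q,r)}_{s}(x)$ and $V^{(q,r)}_{s,\gep}(x)\to\zeta_{s}(x)$ in $L^{4}$ (the latter via an $L^{5}$ bound coming from Lemma~\ref{labesse}(i)), and then deducing $L^{2}$ convergence of the product from the elementary inequality $\bbE[(A_{\gep}B_{\gep}-AB)^{2}]^{2}\le 4(\bbE[(A_{\gep}-A)^{4}]\bbE[B_{\gep}^{4}]+\bbE[(B_{\gep}-B)^{4}]\bbE[A^{4}])$.
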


\noindent We can deduce our main result from the above proposition.

 \begin{proof}[Proof of Theorem \ref{mainres}]
 We can simply repeat the argument used in the previous section for the proof of Theorem \ref{critmartin}.
 \end{proof}

\noindent The proof of Proposition \ref{gepqgep} follows the same structure as that of Proposition \ref{dor}.
The crucial point is to adapt Lemma \ref{lecondit} in the case where $X_t$ is replaced by $X_{\gep}$. The result below is proved in the next subsection.

\begin{lemma}\label{lecondit2}
  For any fixed $s\ge 0$ we have the following convergence in $L^2$.
 \begin{equation}
 \lim_{\gep \to 0} \bbE\left[ \sqrt{\frac{\pi (\log (1/\gep))}{2}} M^{(q,r)}_\gep(E) \ | \ \cF_s\right]=   \tilde D^{(q,r)}_s(E).
 \end{equation}
 
\end{lemma}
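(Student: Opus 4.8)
The plan is to mirror the proof of Lemma \ref{lecondit}, replacing the martingale approximation $X_t$ by the mollified field $X_\gep$ and exploiting the finite-range dependence of the increments of $X_\gep$ recorded in Section \ref{considerations}. As in the proof of Lemma \ref{lecondit}, it suffices to establish the pointwise statement: for $\mu$-a.e.\ $x\in E$,
\begin{equation*}
 \lim_{\gep\to 0}\bbE\left[\sqrt{\tfrac{\pi\log(1/\gep)}{2}}\,W^{(q,r)}_\gep(x)\ \big|\ \cF_s\right]=\tilde Z^{(q,r)}_s(x),
\end{equation*}
together with a domination bound $\bbE[\sqrt{\tfrac{\pi\log(1/\gep)}{2}}\,W^{(q,r)}_\gep(x)\mid\cF_s]\le C\,Z^{(q,r)}_s(x)$ valid for $\gep$ small (the constant coming from \eqref{thelogbound}); then dominated convergence upgrades the pointwise limit first to an a.s.\ limit of $\bbE[\sqrt{\tfrac{\pi\log(1/\gep)}{2}}M^{(q,r)}_\gep(E)\mid\cF_s]$ and then, using $(Z^{(q,r)})^2$ for domination, to convergence in $L^2$. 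So the whole proof reduces to the conditional computation for a single $x$.

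For the conditional computation, first factor out the $\cF_s$-measurable quantity. Write $X_\gep(x)=X_{s,\gep}(x)+(X_\gep-X_{s,\gep})(x)$ and note that the increment $(X_\gep-X_{s,\gep})(x)$ is independent of $\cF_s$ and has variance $K_\gep(x)-K_{s,\gep}(x)=\log(1/\gep)-s+O(1)$ by \eqref{thelogbound}. The truncation event $A^{(q,r)}_{t_\gep}(x)$ involves $\bar X_u(x)$ for $u\in[0,t_\gep]$; for $u\le s$ this is $\cF_s$-measurable, and for $u\in[s,t_\gep]$ one uses that $\bar X_u(x)-\bar X_s(x)=X^{(s)}_{u-s}(x)$ is a Brownian motion independent of $\cF_s$, while for $\gep$ small the mollification only perturbs the endpoint $X_\gep(x)$ by a negligible amount relative to $X_{t_\gep}(x)$ (this is where one must be slightly careful: $X_\gep(x)$ and $X_{t_\gep}(x)$ differ, but their difference is $O(1)$ and, more to the point, the constraint profile $\sqrt{2d}u-\rho_r(u)+q$ is the same). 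Thus, after applying the Cameron--Martin formula (Proposition \ref{cameronmartinpro}) to the Brownian increment, one obtains, exactly as in \eqref{laconvmone},
\begin{equation*}
 \bbE\left[W^{(q,r)}_\gep(x)\ \big|\ \cF_s\right]=W^{(q,r)}_s(x)\,\bP_{\sqrt{2d}s+q-\rho_r(s)-\bar X_s(x)}\left[\forall u\in[0,\log(1/\gep)-s],\ B_u\ge \rho_{r+s}(u)\right]\bigl(1+o(1)\bigr),
\end{equation*}
where the $o(1)$ accounts for the $O(1)$ discrepancy between $X_\gep$ and $X_{\log(1/\gep)}$ together with the extra mollification-range term $2\gep$ appearing in the finite-range condition $|x-y|\ge e^{-s}+2\gep$; these do not affect the leading $\sqrt{\log(1/\gep)}$ asymptotics.

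Finally, apply Lemma \ref{labesse}(i) with $a=\sqrt{2d}s+q-\rho_r(s)-\bar X_s(x)$ and $t=\log(1/\gep)-s$: the probability above equals $\sqrt{\tfrac{2}{\pi(\log(1/\gep)-s)}}\,\mathfrak g_{\log(1/\gep)-s}(a)$, so multiplying by $\sqrt{\tfrac{\pi\log(1/\gep)}{2}}$ and letting $\gep\to0$ gives $\mathfrak g_\infty(a)=a$ in the limit — but what we actually want is $\tilde Z^{(q,r)}_s(x)=Z^{(q,r)}_s(x)\,\bQ_a[\forall u\ge 0,\ \beta_u\ge\rho_{r+s}(u)]$, which comes out because the Brownian motion conditioned to stay above $\rho_{r+s}$ converges, via the Doob--McKean relation \eqref{dmk} and Lemma \ref{labesse}(vi), to the $3$-Bessel process staying above $\rho_{r+s}$; tracking the $a$ factor from $\mathfrak g_t(a)\to a$ and this conditioning reproduces exactly $Z^{(q,r)}_s(x)\cdot\bQ_a[\dots]=\tilde Z^{(q,r)}_s(x)$. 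I expect the main obstacle to be bookkeeping the discrepancy between the mollified field $X_\gep(x)$ and the martingale field $X_{\log(1/\gep)}(x)$ in the truncation event — showing that replacing one by the other changes the relevant probability only by a factor $1+o(1)$ as $\gep\to0$, uniformly enough to survive multiplication by $\sqrt{\log(1/\gep)}$ — but this is precisely the type of estimate already handled in \cite{critix} for the Lebesgue case, and the cross-covariance bounds \eqref{thelogbound} together with the finite-range remarks in Section \ref{considerations} supply exactly what is needed; the singular measure $\mu$ plays no role here since everything is done at a fixed point $x$ before integrating.
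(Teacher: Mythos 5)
Your plan is structurally on the right track (factor out $\cF_s$-measurable terms, apply Cameron--Martin to the mollified increment, identify a Brownian first-passage probability, let $\gep\to0$), but there are two genuine gaps where the proposal diverges from what actually works, and both are the points where the paper departs from the template of Lemma~\ref{lecondit}.

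First, the domination bound you propose, namely $\bbE\bigl[\sqrt{\tfrac{\pi\log(1/\gep)}{2}}\,W^{(q,r)}_\gep(x)\mid\cF_s\bigr]\le C\,Z^{(q,r)}_s(x)$ almost surely, does not hold. After factoring, the conditional expectation equals $W^{(q,r)}_{s,\gep}(x)\cdot V^{(q,r)}_{s,\gep}(x)$, where $W^{(q,r)}_{s,\gep}(x)=e^{\sqrt{2d}X_{s,\gep}(x)-dK_{s,\gep}(x)}\ind_{A^{(q,r)}_s(x)}$ involves the \emph{mollified} field at time $s$. The ratio $W^{(q,r)}_{s,\gep}(x)/W^{(q,r)}_s(x)=e^{\sqrt{2d}(X_{s,\gep}(x)-X_s(x))-d(K_{s,\gep}(x)-K_s(x))}$ is the exponential of a Gaussian with small but positive variance and therefore has no almost-sure upper bound by a deterministic constant. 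So the dominated-convergence route that works for Lemma~\ref{lecondit} (where the clean inequality $\bbE[\dots\mid\cF_s]\le Z^{(q,r)}_s(x)$ is available) breaks down here. The paper instead proves convergence of each of the two factors $W^{(q,r)}_{s,\gep}(x)\to W^{(q,r)}_s(x)$ and $V^{(q,r)}_{s,\gep}(x)\to\zeta_s(x)$ in $L^4$, \emph{uniformly in $x\in E$}, and deduces $L^2$ convergence of the product and then of the integral over $E$ -- this is the step your reduction skips.

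Second, the ``$(1+o(1))$'' claim hides the real difficulty. The Cameron--Martin tilt by $(X_\gep-X_{s,\gep})(x)$ shifts the mean of $X^{(s)}_u(x)$ by the cross-covariance $\sqrt{2d}\,K^{(s)}_{u,\gep,0}$, which is \emph{not} $\sqrt{2d}u$; the constraint in $V^{(q,r)}_{s,\gep}$ therefore reads $B_u\ge \rho_{r+s}(u)-\sqrt{2d}(u-K^{(s)}_{u,\gep,0})$, and the drift term $\sqrt{2d}(u-K^{(s)}_{u,\gep,0})$ is only $O(1)$ uniformly on $[0,t_\gep-s]$, not $o(1)$. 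Since the event is a first-passage-type constraint over the whole long horizon and the unconstrained probability itself decays like $t_\gep^{-1/2}$, an $O(1)$ pointwise perturbation of the boundary cannot be absorbed into a multiplicative $(1+o(1))$ factor by a general argument -- this is exactly why the paper does \emph{not} write such a factor. What the paper does is a two-step limit: first truncate the time horizon to $[0,T]$ (the error being controlled by the $\bQ_{a,t}$-to-$\bQ_a$ comparison in Lemma~\ref{labesse}(v), cf.\ \eqref{steppun}); then, on $[0,T]$, the drift $\sqrt{2d}(u-K^{(s)}_{u,\gep,0})$ \emph{does} vanish uniformly as $\gep\to 0$ and the total-variation convergence \eqref{dfljfdksl} of $\bQ_{a,t_\gep}$ to the $3$-Bessel law on $[0,T]$ yields the limit; finally let $T\to\infty$. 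Your writeup acknowledges the issue (``I expect the main obstacle to be bookkeeping\dots'') but does not supply this argument, and the $(1+o(1))$ factorization you posit would need justification that is not provided and is not actually valid as stated.
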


\begin{proof}[Proof of Proposition \ref{gepqgep}]
 Like  Proposition \ref{dor}, and 
 it is sufficient to prove that 
 \begin{equation}
 \lim_{s\to \infty} \limsup_{\gep\to 0}\bbE\left[  \left|\sqrt{\frac{\pi t_{\gep}}{2}} M^{(q,r)}_\gep-D^{(q,r)}_s\right|^2 \right]=0.
 \end{equation}
 Using a decomposition analogous to  \eqref{ladekomp} and Lemma \ref{lecondit2}, we only need to prove the following 
 \begin{equation}\label{lllast}
 \lim_{s\to \infty} \limsup_{\gep\to 0} t_{\gep} \bbE\left[  \left| M^{(q,r)}_\gep-\bbE\left[  M^{(q,r)}_\gep  \ | \ \cF_s \right] \right|^2 \right]=0.
 \end{equation}
Setting $
 \xi_{s,\gep}(x) := W^{(q,r)}_{\gep}(x)-\bbE[W^{(q,r)}_{\gep}(x) \ | \ \cF_s]$ (recall \eqref{trunk2})
we have like for \eqref{xixi}
\begin{equation}\label{xixi2}
\bbE\left[  \left| M^{(q,r)}_\gep-\bbE\left[  M^{(q,r)}_\gep \ | \ \cF_s \right]\right|^2 \right] 
=\int_{E^2} \bbE\left[ \xi_{s,\gep}(x) \xi_{s,\gep}(y)   \right] \dd x \dd y.
\end{equation}
Recalling the observations of Section \ref{considerations}, 
$(X^{(s)}_{\cdot}(x), (X_{\gep}-X_{\gep,s})(x) )$ and  $(X^{(s)}_{\cdot}(y), (X_{\gep}-X_{\gep,s})(y))$ are independent and independent of $\cF_s$ whenever
$|x-y|\ge e^{-s}+2\gep$. Hence  
 $\xi_{s,\gep}(x) $ and $\xi_{s,\gep}(y) $ are conditionally independent given $\cF_s$ and   like for \eqref{czer} we have
$$ |x-y|\ge e^{-s}+2\gep \quad \Rightarrow  \quad  \bbE\left[\xi_{s,\gep}(x) \xi_{s,\gep}(y) \right]=0.$$
Proceeding as in  \eqref{czer3}, this implies 
 \begin{equation*}
\bbE\left[  \! \left| M^{(q,r)}_\gep-\bbE\left[  M^{(q,r)}_\gep \ | \ \cF_s \right]\right|^2  \! \right]
  \le \int_{E^2} \!\! \!  \bbE\left[ W^{(q,r)}_{\gep}(x)  W^{(q,r)}_{\gep}(y) \right] \ind_{\{|x-y|\le e^{-s}+2\gep\}} \mu(\dd x) \mu(\dd y)
 \end{equation*}
and we conclude that \eqref{lllast} holds using Proposition \ref{lateknik}.

\end{proof}

\subsection{Proof of Lemma \ref{lecondit2}}

The proof of the $L^2$ convergence follows the same spirit as that of Lemma \ref{lecondit} but is more technical.
Using the convention $t=t_{\gep}=\log( 1/\gep)$, we need to show that
\begin{equation}\label{gregre}
 \lim_{\gep \to 0} \sup_{x\in E} \bbE\left[ \left( \tilde Z^{(q,r)}_s(x)- \bbE\left[ \sqrt{\frac{\pi t}{2}} W^{(q)}_\gep(x) \ | \ \cF_s\right]\right)^2 \right]=0.
\end{equation}
The uniformity in \eqref{gregre} implies that the convergence in $L^2$ is maintained after integrating with respect to $x$ over $E$. Taking $\cF_s$-measurable 
terms out of the expectation we obtain
\begin{equation}
   \bbE[  \sqrt{{\pi t}/{2}}  W^{(q,r)}_\gep(x) \ | \ \cF_s ] = W^{(q,r)}_{s,\gep}(x) \times  V^{(q,r)}_{s,\gep}(x),
 \end{equation}
 where
 \begin{equation*}\begin{split}
 W^{(q,r)}_{s,\gep}(x)&:= e^{\sqrt{2d}X_{s,\gep}(x)-d K_{s,\gep}(x)} \ind_{A^{(q,r)}_s(x)},\\
  V^{(q,r)}_{s,\gep}(x)&:=   \sqrt{\frac{\pi t }{2}} \bbE\left[ e^{\sqrt{2d}(X_{\gep}-X_{s,\gep})(x)- d(K_{\gep}-K_{s,\gep})(x)}
  \ind_{\{\forall u\in [s,t],\  \bar X_u(x)\le q+ \sqrt{2d}u-\rho_r(t) \}   } \ | \ \cF_s \right].
   \end{split}\end{equation*}
To prove  \eqref{gregre}, setting 
\begin{equation*}
\zeta_s(x):=(q+\sqrt{2d}s-\rho_r(s)- \bar X_s(x))_+\bP_{q+\sqrt{2d}s-\rho_r(s)- \bar X_s(x)}\left[ \forall u\ge0, \ \beta_u\ge \rho_{s+r}(u)\right] 
\end{equation*}
it is sufficient to prove that the two following convergences holds and
that the respective limit are uniformly bounded in $L^4$ \footnote{This claim is backed by the following inequality
\begin{equation}
 \bbE[ (A_tB_t-AB)^2]^2\le 4 \left(\bbE[(A_t-A)^4]\bbE[B^4_t] +  \bbE[(B_t-B)^4]\bbE[ A^4] \right)
\end{equation}}
\begin{equation}\label{unil4}\begin{split}
 \lim_{\gep \to 0}\sup_{x\in E} \ & \bbE\left[ (W^{(q,r)}_{s,\gep}(x)-W^{(q,r)}_s(x))^4\right]=0,\\
 \lim_{\gep \to 0}\sup_{x\in E} \ & \bbE\left[ (V^{(q,r)}_{s,\gep}(x)- \zeta_s(x))^4 \right]=0.
  \end{split}
\end{equation}
The first line in \eqref{unil4} follows from the the uniform convergence of $K_{s,\gep}(x,y)$  and $K_{s,\gep,0}(x,y)$  towards $K_{s}(x,y)$, which implies that
\begin{equation}
 \lim_{\gep\to 0} \sup_{x\in E}\bbE\left[  \left(e^{\sqrt{2d}X_{s,\gep}(x)-d K_{s,\gep}(x)}- e^{\sqrt{2d}X_{s}(x)-d K_{s}(x)}\right)^4 \right]=0.
\end{equation}
For the second  line in \eqref{unil4}, by translation invariance of $\bar X$, the quantity in the supremum does not depend on $x$. 
Thus we do not need to worry about the $\sup$. We omit the dependence in  $x$ in the next computations for better readability.  We rewrite the event appearing in the definition of   $V^{(q)}_{s,\gep}$ as
$$\{\forall u\in [0,t-s],\   X^{(s)}_u\le q+ \sqrt{2d}(s+u)- \bar X_s-\rho_r(s+u) \}.$$
Using Cameron-Martin Formula 
the exponential tilt in  $V^{(q,r)}_{s,\gep}$  has the effect of shifting the mean of $X^{(s)}_{u}(x)$ by an amount (recall \eqref{crossover})
 $$\bbE[ (X_\gep-X_{s,\gep})(x)\bar X^{(s)}_u(x)]= K_{s+u,\gep,0}(x)-K_{s,\gep,0}(x) =:K^{(s)}_{u,\gep,0}.$$ 
Thus since $X^{(s)}_{u}(x)$ is Brownian motion  we obtain that 
\begin{multline}\label{bbbb}
  V^{(q,r)}_{s,\gep}
 =\sqrt{\frac{\pi t}{2}}\bP_{q+\sqrt{2d}s- \bar X_s-\rho_r(s)}\left[\forall u\in [0,t-s], B_u\ge \rho_{r+s}(u)-\sqrt{2d}(u-K^{(s)}_{u,\gep,0})\right].
 \end{multline}
 Setting $a:=q+\sqrt{2d}s- \bar X_s-\rho_r(s)$, using Lemma \ref{labesse} item $(i)$ (and the fact that $\mathfrak g_t(a)$ converges to $a$) we obtain that 
 \begin{equation}\label{woopz}
V^{(q,r)}_{s,\gep}=\sqrt{\frac{t}{t-s}}\mathfrak g_t(a)\bP_{t,a}\left[\forall u\in [0,t-s], B_u\ge \rho_{r+s}(u)-\sqrt{2d}(u-K^{(s)}_{u,\gep,0})\right].
 \end{equation}
We have $\lim_{\gep\to 0}\sqrt{\frac{t_{\gep}}{t_{\gep}-s}}\mathfrak g_{t_{\gep}-s}(a)=a$.
Furthermore \eqref{woopz} implies that $V^{(q,r)}_{s,\gep}$ is bounded in $L^5$ and hence tight in $L^4$.
Thus to conclude we need to show that the following convergence holds almost surely.
\begin{multline}
 \lim_{\gep\to 0} \bP_{t_{\gep},a}\left[\forall u\in [0,t_{\gep}-s], B_u\ge \rho_{r+s}(u)-\sqrt{2d}(u-K^{(s)}_{u,\gep,0})\right]\\
 =  \bQ_{a} \left[\forall u\in [0,t_{\gep}-s], B_u\ge \rho_{r+s}(u)\right].
\end{multline}
The proof is very similar to that of \eqref{browntobess}. Repeating the proof  of \eqref{steppun} (first line) displayed in the appendix, we obtain that 
\begin{multline}
  \lim_{\gep\to 0} \bP_{t_{\gep},a}\left[\forall u\in [0,t_{\gep}-s], B_u\ge \rho_{r+s}(u)-\sqrt{2d}(u-K^{(s)}_{u,\gep,0})\right]\\
 =  \lim_{T\to \infty} \lim_{\gep\to 0}\bQ_{a,t_{\gep}} \left[\forall u\in [0,T ], B_u\ge \rho_{r+s}(u)-\sqrt{2d}(u-K^{(s)}_{u,\gep,0}) \right].
\end{multline}
Now since $\sqrt{2d}(u-K^{(s)}_{u,\gep,0})$ converges uniformly to $0$ for any fixed $T$, using the convergence in total variation for 
$\bQ_{a,t_{\gep}} \left[ (B_u)_{u\in[0,T]}\right]$ mentioned in \eqref{dfljfdksl}, we obtain that 
\begin{multline}
  \lim_{T\to \infty} \lim_{\gep\to 0}\bQ_{a,t_{\gep}} \left[\forall u\in [0,T ], B_u\ge \rho_{r+s}(u)-\sqrt{2d}(u-K^{(s)}_{u,\gep,0}) \right]\\ =
  \bQ_a \left[\forall u\in [0,T ], B_u\ge \rho_{r+s}(u)\right]
\end{multline}
which concludes the proof.

\qed

\section{Degeneracy of critical GMC}\label{lastwalkz}

The aim of this section is to prove Proposition \ref{degeneracy} (a proof of Lemma \ref{topzzzz} is presented in Appendix \ref{toupz}).
The main input for the proof is the following variant of \cite[Proposition 2.4]{critix}.  Using the fact that $\gO$ is a smaller set, we obtain an upper bound for  $
\sup_{x\in \gO}\bar X_t(x)$ which is significantly smaller than the one that one would obtain on $[0,1]^d$.

\begin{proposition}\label{smalltek}
 For any $\alpha<1$ we have
\begin{equation}
 \sup_{x\in \gO} \left( \bar X_t(x)- \sqrt{2}t +\frac{\alpha \rho(t)}{\sqrt{2\log 2}} \right)<\infty
\end{equation}
\end{proposition}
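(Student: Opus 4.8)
The plan is to follow the scheme of the proof of \cite[Proposition 2.4]{critix}, the improvement over the case $E=[0,1]$ coming entirely from the thinness of $\gO$: at scale $e^{-n}$ the Cantor set $\gO$ is covered by only about $e^{\,n-\rho(m_n)}$ intervals of the construction (where $m_n$ is the generation for which those intervals have length of order $e^{-n}$, i.e.\ $m_n\log 2+\rho(m_n)\simeq n$), whereas $[0,1]$ would require $\simeq e^{n}$ dyadic cubes. Write $c:=\alpha/\sqrt{2\log 2}$. Since $(t,x)\mapsto\bar X_t(x)$ admits a continuous modification and $[0,T_0]\times\gO$ is compact, $\sup_{t\in[0,T_0],\,x\in\gO}\big(\bar X_t(x)-\sqrt 2\,t+c\rho(t)\big)$ is almost surely finite for every fixed $T_0$; so it suffices to prove that
\[
\lim_{K\to\infty}\ \bbP\Big[\exists\, t\ge T_0,\ x\in\gO:\ \bar X_t(x)-\sqrt 2\,t+c\rho(t)\ge K\Big]=0,
\]
which I would obtain by bounding the probability of the event restricted to the window $t\in[n,n+1]$, summably in $n\ge T_0$.

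For fixed $n$, let $m_n$ be the largest integer with $m_n\log 2+\rho(m_n)\le n$, so $\gO$ is covered by the $2^{m_n}$ generation-$m_n$ intervals $I_1,\dots,I_{2^{m_n}}$, each of length of order $e^{-n}$, and $m_n\log 2=n-\rho(m_n)+O(1)$. Using $-\sqrt2\,t\le-\sqrt2\,n$ and $\rho(t)\le\rho(n+1)$ on $[n,n+1]$, a union bound over $j$ reduces matters to estimating
\[
\bbP\Big[\sup_{t\in[n,n+1],\,x\in I_j}\bar X_t(x)\ge K+\sqrt2\,n-c\rho(n+1)\Big].
\]
Here I would apply Gaussian concentration (the Borell--TIS inequality): the process $(t,x)\in[n,n+1]\times I_j\mapsto\bar X_t(x)$ has maximal variance $\bar K_{n+1}(x)=n+1$, and $\bbE\big[\sup_{t\in[n,n+1],\,x\in I_j}\bar X_t(x)\big]\le C_0$ with $C_0$ independent of $n,j$: the Brownian increment of $\bar X_\cdot(x_j)$ over the unit time-window contributes $O(1)$, and the spatial oscillation does too, since $\Var\big(\bar X_t(x)-\bar X_t(y)\big)=2t-2\bar K_t(x,y)=O(1)$ for $x,y$ in a common $I_j$ and $t\le n+1$ by \eqref{steak}--\eqref{thelogbound}, with a metric entropy that is uniformly controlled because on scales $\le e^{-n}$ the field $\bar X_t$ is smooth. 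Borell--TIS then gives $\bbP[\sup\ge C_0+v]\le\exp(-v^2/2(n+1))$ for $v$ with $C_0+v\ge 0$.

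Putting $v=K+\sqrt2\,n-c\rho(n+1)-C_0$, expanding the square and using $\rho(n)=o(n)$, $\rho(n)^2/n=o(\rho(n))$ and $\rho(n+1)=\rho(n)+o(1)$, this is $\le\exp\big(-n+\sqrt2\,c\,\rho(n)-\sqrt2\,K+O(1)+o(\rho(n))\big)$; multiplying by the number of intervals $2^{m_n}\le C\,e^{\,n-\rho(m_n)}$ and using $m_n\sim n/\log 2$ together with the slow variation of $L$ to get $\rho(m_n)=(\log 2)^{-1/2}\rho(n)(1+o(1))$, the window-$n$ probability is bounded by
\[
C\,e^{-\sqrt2\,K}\exp\!\Big(\big(\sqrt2\,c-(\log2)^{-1/2}\big)\rho(n)+o(\rho(n))\Big).
\]
With $c=\alpha/\sqrt{2\log2}$ the coefficient equals $-(1-\alpha)(\log2)^{-1/2}<0$, so for $n$ large the exponent is $\le-\tfrac12(1-\alpha)(\log2)^{-1/2}\rho(n)$, which is summable in $n$ since $\rho(n)=n^{1/2}L(n)$ grows faster than $\log n$; summing over $n\ge T_0$ (the finitely many small $n$ being absorbed into the compact range already handled) gives a bound $\le C'e^{-\sqrt2\,K}\to0$ as $K\to\infty$, proving the claim. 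I expect the main obstacle to be exactly this last accounting: one must check that the combinatorial gain $e^{-\rho(m_n)}$ from the thinness of $\gO$ strictly beats the Gaussian cost $e^{\sqrt2\,c\,\rho(n)}$, which is precisely what the constant $c=\alpha/\sqrt{2\log2}$ with $\alpha<1$ guarantees; a secondary technical point is the uniform-in-$(n,j)$ control of $\bbE[\sup_{[n,n+1]\times I_j}\bar X_t]$ and of the associated entropy, for which one uses that below scale $e^{-n}$ the martingale field is regular.
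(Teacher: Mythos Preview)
Your proposal is correct and follows essentially the same approach as the paper's proof. Both arguments union-bound over the Cantor pieces at the appropriate scale, combine a Gaussian tail estimate for the pointwise value with a uniform $O(1)$ control on the local oscillation, and arrive at a summable bound of order $e^{(\alpha-1)\vartheta(n)/\sqrt{\log 2}}$. The only cosmetic difference is the choice of indexing: the paper indexes directly by the Cantor generation $n$, taking the time window $[t_n,t_{n+1})$ with $t_n=n\log 2+\vartheta(n)$, which avoids your intermediate conversion $\vartheta(m_n)\sim(\log 2)^{-1/2}\vartheta(n)$; and for the local oscillation the paper quotes a sub-Gaussian tail lemma from \cite{critix} rather than invoking Borell--TIS plus an entropy bound, but these are interchangeable devices.
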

\noindent The proof (which is a simpler version of that of \cite[Proposition 2.4]{critix}) is presented in Appendix \ref{smalltakk}. 
We set $\bar \rho(t)=\frac{1}{2}\rho(t)$.
Define 
$$\mathfrak  B_q:= \{\forall x\in \gO, \forall t\ge 0 \ :  \bar X_t(x)<  \sqrt{2}t - \bar \rho(t)+q \} $$
Proposition \ref{smalltek} implies that $\lim_{q\to \infty} P(\mathfrak B_q)=1$. On the other hand it is not difficult to check that the restriction of our different processes to $\mathfrak B_q$ converges to $0$ in $L^1$.

\begin{proposition}\label{bigaek}
We have 
\begin{equation}
\lim_{t\to \infty} \bbE[|D_t| \ind_{\mathfrak  B_q}]= \lim_{t\to \infty} \sqrt{t} \bbE[M_t \ind_{\mathfrak  B_q}]=
 \lim_{\gep\to 0} \sqrt{\log (1/\gep)} \bbE[M_{\gep} \ind_{\mathfrak  B_q}]=0.
\end{equation}

\end{proposition}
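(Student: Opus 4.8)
The plan is to use membership in $\mathfrak B_q$ to replace each of $M_t(\gO)$, $|D_t(\gO)|$, $M_\gep(\gO)$ by a version truncated at the much lower threshold $\sqrt 2 s-\bar\rho(s)+q$, to evaluate the resulting first moments by a Cameron--Martin tilt, and to conclude with the divergent case of the Dvoretzky--Erd\"os test for $\bar\rho=\tfrac12\rho$. First I would record the pointwise reduction: for every fixed $x\in\gO$ one has $\ind_{\mathfrak B_q}\le \ind_{\{\forall s\le t:\ \bar X_s(x)<\sqrt 2 s-\bar\rho(s)+q\}}$, so, writing $K_t(x)=K_0(x)+t$, $X_t(x)=X_0(x)+\bar X_t(x)$ and $\sqrt 2 K_t(x)-X_t(x)=(\sqrt 2 K_0(x)-X_0(x))+(\sqrt 2 t-\bar X_t(x))$,
\begin{align*}
 \bbE\big[M_t(\gO)\ind_{\mathfrak B_q}\big]&\le \int_{\gO}\bbE\Big[e^{\sqrt 2 X_t(x)-K_t(x)}\ind_{\{\forall s\le t:\ \bar X_s(x)<\sqrt 2 s-\bar\rho(s)+q\}}\Big]\mu(\dd x),\\
 \bbE\big[|D_t(\gO)|\ind_{\mathfrak B_q}\big]&\le \int_{\gO}\bbE\Big[\big(|\sqrt 2 K_0(x)-X_0(x)|+|\sqrt 2 t-\bar X_t(x)|\big)\,e^{\sqrt 2 X_t(x)-K_t(x)}\ind_{\{\forall s\le t:\ \cdots\}}\Big]\mu(\dd x),
\end{align*}
and similarly for $M_\gep(\gO)$ with $e^{\sqrt 2 X_\gep(x)-K_\gep(x)}$ replacing $e^{\sqrt 2 X_t(x)-K_t(x)}$ and $t_\gep=\log(1/\gep)$ replacing $t$.

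Next I would apply Proposition \ref{cameronmartinpro} to the tilt by $e^{\sqrt 2 X_t(x)-K_t(x)}$: since $\bbE[X_t(x)X_s(x)]=K_{s\wedge t}(x,x)$ this leaves all covariances unchanged, turns $\bar X_\cdot(x)$ into a Brownian motion with drift $\sqrt 2$, makes $\sqrt 2 K_0(x)-X_0(x)$ a centered Gaussian of variance $K_0(x)$ independent of $\bar X_\cdot(x)$ (with $\bbE|\sqrt 2 K_0(x)-X_0(x)|$ bounded uniformly on the compact $\gO$), and — after the substitution $Y_s:=q-(\bar X_s(x)-\sqrt 2 s)$, a Brownian motion started at $q$ — converts the truncation event into $\{\forall s\le t:\ Y_s>\bar\rho(s)\}$ while $\sqrt 2 t-\bar X_t(x)=q-Y_t$. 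This gives, with $C$ depending only on $K_0$ and $\gO$,
\begin{align*}
 \bbE\big[M_t(\gO)\ind_{\mathfrak B_q}\big]&\le \mu(\gO)\,\bP_q\big[\forall s\le t:\ B_s>\bar\rho(s)\big],\\
 \bbE\big[|D_t(\gO)|\ind_{\mathfrak B_q}\big]&\le C\,\bP_q\big[\forall s\le t:\ B_s>\bar\rho(s)\big]+\mu(\gO)\,\bP_q\big[\,|B_t-q|\,\ind_{\{\forall s\le t:\ B_s>\bar\rho(s)\}}\big];
\end{align*}
bounding $|B_t-q|\le B_t+q$ on the event (where $B_t>\bar\rho(t)\ge 0$) and using the Doob--McKean identity \eqref{dmk} with $F=\ind_{\{\forall s\le t:\ \cdot_s>\bar\rho(s)\}}$ (legitimate since $\bar\rho\ge 0$, so on $\{\forall s:\ B_s>\bar\rho(s)\}$ one has $\forall s:\ B_s>0$) bounds the last expectation by $q\,\bQ_q[\forall s\le t:\ \beta_s>\bar\rho(s)]+q\,\bP_q[\forall s\le t:\ B_s>\bar\rho(s)]$. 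For $M_\gep$ the tilt by $e^{\sqrt 2 X_\gep(x)-K_\gep(x)}$ shifts the mean of $\bar X_s(x)$ by $\sqrt 2\bar K_{s,\gep,0}(x,x)$, which by \eqref{thelogbound} differs from $\sqrt 2 s$ by at most a fixed constant $C_R$ for all $x\in\gO$ and $s\le t_\gep$, so the same computation yields $\bbE[M_\gep(\gO)\ind_{\mathfrak B_q}]\le \mu(\gO)\,\bP_{q+\sqrt 2 C_R}[\forall s\le t_\gep:\ B_s>\bar\rho(s)]$.

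It then remains to handle the Brownian and Bessel probabilities. Factoring $\bP_q[\forall s\le t:\ B_s>\bar\rho(s)]=\bP_q[\forall s\le t:\ B_s\ge 0]\cdot\bQ_{q,t}[\forall s\le t:\ B_s>\bar\rho(s)]$ and invoking Lemma \ref{labesse}(i) and (v), one has $\bP_q[\forall s\le t:\ B_s\ge 0]\le \sqrt{2/(\pi t)}\,q\to 0$ while $\sqrt t\,\bP_q[\forall s\le t:\ B_s\ge 0]$ stays bounded, and $\bQ_{q,t}[\forall s\le t:\ B_s>\bar\rho(s)]\le (1+\sqrt 2)\sqrt{\bQ_q[\forall s\le t:\ \beta_s>\bar\rho(s)]}$. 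Since the events $\{\forall s\le t:\ \beta_s>\bar\rho(s)\}$ decrease, as $t\to\infty$, to $\{\forall s\ge 0:\ \beta_s>\bar\rho(s)\}$, each of $\bbE[|D_t|\ind_{\mathfrak B_q}]$, $\sqrt t\,\bbE[M_t\ind_{\mathfrak B_q}]$ and $\sqrt{t_\gep}\,\bbE[M_\gep\ind_{\mathfrak B_q}]$ tends to $0$ once we know
\[
 \bQ_q\big[\forall s\ge 0:\ \beta_s>\bar\rho(s)\big]=0,
\]
which is precisely the divergent case of the Dvoretzky--Erd\"os test (\cite[Theorem 3.22]{Peresmortersbrownian}, cf.\ the divergent instance of \eqref{besselcond}) applied to $\bar\rho=\tfrac12\rho$: since $\int_1^\infty u^{-3/2}\rho(u)\,\dd u=\infty$ by the construction of $\mu$ in Section \ref{degens}, one has $\beta_s\le\bar\rho(s)$ for arbitrarily large $s$, almost surely. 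Passage from $\gO$ to an arbitrary $E\in\mathcal B_b$ or $f\in B_b$ is immediate since $\mu$ is carried by $\gO$ and $0\le M_t(f)\le \|f\|_\infty M_t(\gO)$, and likewise for $D_t$.

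The step I expect to be the main obstacle is the $M_\gep$ case: the event $\mathfrak B_q$ is defined through the martingale field $\bar X$, whereas $M_\gep$ carries the mollified weight $e^{\sqrt 2 X_\gep(x)-K_\gep(x)}$, so one is forced to Cameron--Martin-tilt the \emph{joint} Gaussian $(X_\gep(x),(\bar X_s(x))_{s\le t_\gep})$ and to control the ensuing non-affine drift $\sqrt 2\bar K_{s,\gep,0}(x,x)$ uniformly in $x\in\gO$ and $\gep\in(0,1]$, which is exactly where the estimate \eqref{thelogbound} is used. The Bessel input, although it is the conceptual heart of the degeneracy, enters only as a black box once this reduction is carried out.
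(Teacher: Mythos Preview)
Your proof is correct and follows essentially the same route as the paper: pointwise replacement of $\ind_{\mathfrak B_q}$ by the truncation $\ind_{B_{q,t}(x)}$, Cameron--Martin tilt to remove the exponential weight, and then the divergent Dvoretzky--Erd\H{o}s test to kill the residual Bessel probability. The only differences are cosmetic: the paper uses the stochastic domination $\bQ_{q,t}\preceq\bQ_q$ (via FKG) to pass from the conditioned Brownian motion to the Bessel process, whereas you invoke Lemma~\ref{labesse}(v) and pick up a harmless square root; and the paper's split for $|D_t|$ is $(\sqrt 2 t+q-\bar X_t(x))+|\sqrt 2 K_0(x)-X_0(x)-q|$, which makes the Doob--McKean step an exact identity rather than an inequality, but your split with $|B_t-q|\le B_t+q$ leads to the same conclusion.
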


\begin{proof}[Proof of Proposition \ref{degeneracy}]
Let us focus on the case of $D_t$ since the others are identical.
For every $q$,  $\lim_{t\to \infty} D_t \ind_{\mathfrak  B_q}=0$ in probability, and we conclude that $D_t\ind_{\bigcup_{q\ge 1}\mathfrak B_q}$  (which almost surely coincides with $D$) also converges in probability to zero.

\end{proof}

\begin{proof}[Proof of Proposition \ref{bigaek}]
Let us set for this proof 
$$ B_{q,t}(x):=\{\forall s\in[0,t],\bar X_t(x)\le \sqrt{2}t+q- \bar \rho(t) \}.$$
Note that we have
\begin{equation}
\bbE[M_t \ind_{\mathfrak  B_q}]\le \int_\gO \bbE\left[ e^{\sqrt{2}X_t(x)-K_t(x)} \ind_{B_{q,t}(x)}\right]\mu(\dd x)
\end{equation}
The integrand above does not depend on $x$.
Using Cameron-Martin formula and Lemma \ref{labesse} (item $(i)$) we have 
\begin{equation*}
\sqrt{\pi t/2}\bbE\left[ e^{\sqrt{2}X_t(x)-K_t(x)} \ind_{B_{q,t}(x)}\right]\le q \bQ_{q,t}\left[ \forall s\in [0,t], B_s\ge \bar \rho(s) \right].
\end{equation*}
Using stochastic comparision between $\bQ_{q,t}$ and $\bQ_{q}$ we have
\begin{equation}
  \bQ_{q,t}\left[ \forall s\in [0,t], B_s\ge \bar \rho(s) \right]\le   \bQ_{q} \left[ \forall s\in [0,t], B_s\ge \bar \rho(s) \right].
\end{equation}
Since $\bar\rho$ does not satisfies \eqref{besselcond}, the r.h.s.\ converges to zero and thus 
\begin{equation}
\lim_{t\to \infty}\sqrt{\pi t/2} \bbE[M_t \ind_{\mathfrak  B_q}]=0. 
\end{equation}
The same computation works for  $M_{\gep}$. In the case of  $D_t$, we have
\begin{equation}
 \bbE[|D_t| \ind_{\mathfrak  B_q}]= \int_\gO\bbE\left[ |\sqrt{2}K_t(x)- X_t(x)| 
 e^{\sqrt{2}X_t(x)-K_t(x)} \ind_{B_{q,t}(x)}\right]\mu(\dd x)
\end{equation}
We split the term in the expectation in the following manner
$$|\sqrt{2}K_t(x)- X_t(x)|\le  (\sqrt{2}t+q- \bar X_t(x))+ |\sqrt{2}K_0(x)- X_0(x)-q|.$$
We have (from Cameron-Martin formula
\begin{equation}
 \bbE\left[   (\sqrt{2}t+q- \bar X_t(x))
 e^{\sqrt{2}X_t(x)-K_t(x)} \ind_{B_{q,t}(x)}\right]= q \bQ_q \left[ \forall s\in [0,t], B_s\ge \bar \rho(s) \right],
\end{equation}
which tends to zero when $t\to \infty$. For the other term, we also use Cameron Martin formula. We obtain that 
\begin{multline}
  \bbE\left[   |\sqrt{2}K_0(t)- X_0(x)-q|
 e^{\sqrt{2}X_0(x)-K_t(x)} \ind_{B_{q,t}(x)}\right]\\
 =\bbE\left[ |X_0(x)+q| \right]\bQ_{q,t} \left[ \forall s\in [0,t], B_s\ge \bar \rho(s) \right].
\end{multline}
The first term in the r.h.s.\ is bounded uniformly in $\gO$ and the second one has been shown to be $o(t^{-1/2})$ (and thus in particular tends to zero).

\end{proof}

\section{Extension of our results}\label{sketch}

In this section, we sketch the proof of the claims made in Section \ref{extense}. 
In Section \ref{relax} we justify the claim that the results also hold when \eqref{clazomp} is replaced  by the weaker assumption \eqref{root}.
In Sections \ref{convj} and \ref{devj} we give an outline for the proof of Theorem \ref{outro}

\subsection{Relaxing the assumption on $\mu$}\label{relax}

Let us briefly outline the argument establishing the convergence  of $D_t(\cdot,\mu)$ and that of  $\sqrt{\frac{\pi t}{2}}M_t(\cdot,\mu)$. The same ideas can be used for $\sqrt{\frac{\pi \log (1/\gep)}{2}}M_\gep(\cdot,\mu)$. 
Since $\mu_n$ satisfies \eqref{clazomp}, using  Proposition \eqref{uint}, for for any fixed $n\ge 1$, $E\in \mathcal B_b$  and $q\ge 1$, the martingale $D^{(q)}_t(E,\mu_n)$ is uniformly integrable. 
We observe that 
\begin{equation}\label{apirox}
 \bbE\left[ |D^{(q)}_t(E,\mu)- D^{(q)}_t(E,\mu_n)| \right]=q (\mu-\mu_n)(E)= q\mu(E\cap A^{\cc}_N).
\end{equation}
In other words $D^{(q)}_t(E,\mu)$ is well approximated uniformly in $t$ by a uniformly integrable martingale.
This is the right-setup to apply Proposition \ref{metaprop}
($t$ is playing the role of $n$ and $n$ that of $r$) and from which we deduce that $D^{(q)}_t(E,\mu)$ is uniformly integrable and hence converge.  
From \eqref{apirox} and Fatou we also obtain the $L^1$ convergence for the limits
\begin{equation}\label{sdfgh}
 \lim_{n\to \infty} D^{(q)}_{\infty}(E,\mu_n)= D^{(q)}_{\infty}(E,\mu).
 \end{equation}
 The convergence of $D_t$ as a measure follows from the usual argument bases on Proposition \ref{weako}.
For $M_t$ we know from Proposition \ref{dor} that for every $n\ge 1$ we have the following convergence in $L^1$ 
\begin{equation}\label{sdfghj} 
 \lim_{t\to \infty} M^{(q)}_t(E,\mu_n)= D^{(q)}_{\infty}(E,\mu_n)
\end{equation}
On the other hand we have 
\begin{equation}
  \bbE\left[ |M^{(q)}_t(E,\mu)- M^{(q)}_t(E,\mu_n)| \right]= 
  \mu(E\cap A^{\cc}_N)\bP_q\left[ \forall s\in [0,t], B_s \ge 0\right]
\end{equation}
This implies  (cf. Lemma \ref{labesse} item (i)) that   $$\lim_{n\to \infty}\sup_{t\ge 1}\sqrt{\frac{\pi t}{2}} \bbE\left[ |M^{(q)}_t(E,\mu)- M^{(q)}_t(E,\mu_n)| \right]= 0$$
 which is exactly what is required to interchange limits in $t$ and $n$. Hence we deduce from \eqref{sdfgh}-\eqref{sdfghj} that we have the following convergence in $L^1$. 
\begin{equation}
\lim_{t\to \infty}\sqrt{\frac{\pi t}{2}}M^{(q)}_t(E,\mu)=\lim_{n\to \infty}  
 \lim_{t\to \infty} \sqrt{\frac{\pi t}{2}}M^{(q)}_t(E,\mu_n)=D^{(q)}_{\infty}(E,\mu). 
\end{equation}
Again the convergence of $M^{(q)}_t(E,\mu)$ as a measure follows.

\subsection{Proof of Theorem \ref{outro}: Convergence}\label{convj}
We assume that $\rho$ satisfies \eqref{besselcond}.
The field $Y_t$ has stronger correlations than $X_t$ therefore we have 
\begin{equation}\label{toctic}
 \sup_{x\in \bbR^d} \sup_{t\ge 0}\left(\sqrt{2d}t- Y_t(x)\right)<\infty.
\end{equation}
The proof of \eqref{toctic} can be obtained by replicating that of \cite[Proposition 2.4]{critix}.
Using this we can define reduce the proof of convergence of $F_t$ to that of $F^{(q)}_t$ which is defined via the same truncation proceedure as \eqref{trunk1}. For the second truncation in $r$ we slightly modify the definition in \eqref{trunk2} adding a factor $\sqrt{2d}$ in front of $\rho_r$   (defined by $\rho_r(u)=\rho(r+u)+\rho(r)$).
\begin{equation}\begin{split}
\bar A^{(q,r)}_t(x)&:= \{ \forall s\in [0,t], \ : \ Y(s)\le \sqrt{2d}(s-\rho_r(s))+q  \}, \\
N^{(q,r)}_t(E)&:=\int_E e^{\sqrt{2d}Y_t(x)-dt}\ind_{A^{(q,r)}_t(x)}\dd x=:\int_{E} \bar W^{(q,r)}_t\dd x ,\\
F^{(q,r)}_t(E)&:=\int_E (\sqrt{2d}(t- \rho_r(t))+q -Y_t(x))\bar W^{(q,r)}_t\dd x=:\int_{E} \bar Z^{(q,r)}_t \dd x.
\end{split}\end{equation}
With these definitions, the proof of the convergence of $\sqrt{ 2t/\pi} N_t$ and $F_t$ follows exactly the same plan as that of 
$\sqrt{ 2t/\pi} M_t$ and $D_t$. 
The only step which requires a (minor) modification is the computation of the second moment of  $N^{(q,r)}_t$. 
Setting 
$$\bar u(t,x,y)=  \left[\log \left(\frac{1}{|x-y|}\right)+\rho \left( \log \left(\frac{1}{|x-y|}\right) \right)\right]\wedge t$$
Adapting the proof of \eqref{tek1}  in a straightforward manner 
we show that
 \begin{equation}\label{teknn}
  \  \bbE\left[ \bar W^{(q,r)}_t(x) \bar W^{(q,r)}_t(y)\right] \le C e^{d \bar u- 2d\rho(\bar u)} (\bar u+1)^{-3/2}  ( t-\bar u+1)^{-1}.
 \end{equation}
 Now since 
 $$ d \bar u- 2d\rho(\bar u)\le d \log \left(\frac{1}{|x-y|}\right)-d\rho  \left( \log\left(\frac{1}{|x-y|}\right)\right)$$
 it is not difficult to check that \eqref{teknn} implies that 
 \begin{equation}
    \sup_{t\ge 0}  \bbE\left[ t N^{(q,r)}_t(E)^2 \right]\le \gl(E) e^{-d \rho(\log (1/ \Diam(E))} \log (\Diam (E))^{-1/2}.
 \end{equation}
The above estimate is sufficient to show that the measure $F_{\infty}$ obtained in the limit is atomless.

\subsection{Proof of Theorem \ref{outro}: Degeneracy}\label{devj}
Let us now assume that $\rho$ fails to satisfy \eqref{besselcond}.
Going beyond \eqref{toctic} when adapting the proof of \cite[Proposition 2.4]{critix}, it is possible to prove that
\begin{equation}\label{toctic2}
 \sup_{x\in \bbR^d} \sup_{t\ge 0}\left(\sqrt{2d}(t- \rho(t))- Y_t(x)\right)<\infty.
\end{equation}
This implies that in particular that given $R$ there exists a random $q_0$ such that for all $E\subset B(0,R)$ $t\ge 0$  and $q\ge q_0$
$$ N^{(q,0)}_t(E)= N_t(E)  \text{ and } F^{(q,0)}_t(E)= F_t(E)+ q N_t(E)\ge F_t(E) $$
To conclude we simply need to  prove that 
\begin{equation}
 \lim_{t\to \infty}\sqrt{t}\bbE[N^{(q,0)}_t]=\lim_{t\to \infty}\bbE[F^{(q,0)}_t]=0,
\end{equation}
which can be done by replicating the proof of Proposition \ref{degeneracy} from  Section \ref{lastwalkz}.

 \appendix

 \section{Proof of technical estimates}\label{teklesproofs}

 \subsection{Proof of Lemma \ref{labesse}}\label{laabesse}
  The first two estimates are direct consequences of the reflection principle, the third one is obtained by applying  the optional stopping Theorem to the martingale $e^{2aB_t-2a^2 t}$.
  To prove $(iv)$, since $\rho_r\le \rho_0$ we have
\begin{equation}\label{took}
\bQ_a\left[ \exists s\ge 0, \  \beta_s\le \rho_r(s)\right]
\le \bQ_a\left[ \exists s\in [0,T], \  \beta_s\le \rho_r(s)\right] + 
\bQ_a\left[ \exists s\ge T, \  \beta_s\le \rho_0(s)\right]
\end{equation}
Using  \eqref{dvorerdos} the second term in the r.h.s.\ of \eqref{took} tends to zero when $T\to \infty$.
To conclude it is sufficient to observe that since $\lim_{r\to \infty} \sup_{[0,T]}\rho_r(s)=0$, we have for every $T>0$
\begin{equation}
 \lim_{r\to \infty} \bQ_a\left[ \exists s\in [0,T], \  \beta_s\le \rho_r(s)\right]=0.
\end{equation}
To prove $(v)$ our starting point we apply Doob-McKean  cf.\ \eqref{dmk} to obtain that for any event  $A_0$
 \begin{equation}\label{toopz}
 \bQ_{a,t}\left[  (B_s)_{s\in[0,t]}\in  A_0 \right]=\frac{\bQ_a\left[ \frac{a}{\beta_t} \ind_{\{(\beta_s)_{s\in [0,t]}\in  A_0\}}\right]}{{\bP_a\left( \forall s\in [0,t], B_s\ge 0\right)}}.
 \end{equation}
 Applying the above to $A_0=A\cap\left\{B_t> \delta \sqrt{\pi t} \right\}$
we obtain that for any $\delta>0$.
\begin{equation}
 \bQ_{a,t}\left[  (B_s)_{s\in[0,t]}\in A \right]\le \frac{a}{\delta\sqrt{\pi t}}\frac{\bQ_{a}\left[  (\beta_s)_{s\in [0,t]}\in A \right]}{\bP_a\left( \forall s\in [0,t], B_s\ge 0\right)}+\bQ_{a,t}\left(B_t\le \delta \sqrt{\pi t}  \right)
\end{equation}
Now item $(i)$ implies that $\bP_a\left( \forall s\in [0,t], B_s\ge 0\right)\ge a (\pi t)^{-1/2}$ for $t\ge t_0(a)$ sufficiently large.
Furthermore FKG inequality applied to the measure $\bP_{t}[ \ \cdot  \ | \ B_t\ge 0]$ (see \cite{preston}) implies that 
\begin{equation}
\bQ_{a,t}\left(B_t\le \delta \sqrt{\pi t}  \right)\le \bP_a[ B_t  \le \delta \sqrt{\pi t} \ | \  B_t\ge 0]\le 2 \bP_a[ B_t  \in [0, \delta \sqrt{\pi t} ]]\le \sqrt{2} \delta.
\end{equation}
Taking $\delta= \sqrt{\bQ_{a}\left[  (\beta_s)_{s\in [0,t]}\in A \right]}$, we obtain the desired conclusion. 
To prove (vi) we are going to prove the two following identities
\begin{equation}\begin{split}\label{steppun}
  \lim_{T\to\infty}\limsup_{t\to \infty} \big|  \bQ_{a,t}\left[ \forall s\in [0,t], B_s\ge \rho_r(s) \right]&-   \bQ_{a,t}\left[ \forall s\in [0,T], B_s\ge \rho_r(s) \right]\big|=0,\\
 \lim_{T\to \infty}\lim_{t\to \infty} \bQ_{a,t}\left[ \forall s\in [0,T], B_s\ge \rho(s) \right] & =\bQ_{a}\left[ \forall s\ge 0, \beta_s\ge \rho(s) \right]
 \end{split}
\end{equation}
To show that the first line in \eqref{steppun} holds
we use the fact that  
  \begin{multline}
  \left|  \bQ_{a,t}\left[ \forall s\in [0,t], B_s\ge \rho_r(s) \right]-   \bQ_{a,t}\left[ \forall s\in [0,T], B_s\ge \rho_r(s) \right]\right| \\
   \le     \bQ_{a,t}\left[ \exists s\in [T,t], B_s\le \rho_r(s) \right].
   \end{multline}
From $(v)$ we have
   \begin{equation}
     \bQ_{a,t}\left[ \exists s\in [T,t], B_s\le \rho(s) \right]\le (1+\sqrt{2})  \sqrt{\bQ_{a}\left[  \exists s\ge T , \beta_s\le \rho(s) \right]}.
   \end{equation}
and according to \eqref{dvorerdos}, the r.h.s.\ tends to zero when $T\to \infty$, which concludes the proof of \eqref{browntobess}. 
The convergence in the second line  in \eqref{steppun} is a direct consequence of the following convergence in total variation
\begin{equation}\label{dfljfdksl}
\lim_{t\to \infty} \bQ_{a,t}\left[ (B_s)_{s\in[0,T]}\in \cdot  \right]  = \bQ_a\left[ (\beta_s)_{s\in[0,T]}\in \cdot\right],
   \end{equation}
(that can be inferred from Doob-McKean identity \eqref{dmk}).
 \qed
 
 \subsection{Proof of Proposition \ref{listov}}\label{liistov}

 For $(i)$, since all the cases are similar we only provide a detailed proof for $D^{(q,r)}_t$
 (note that the martingale (resp. supermartingale) property is proved for $M^{(q)}_t$ res $D^{(q)}_t$ in \cite[Lemma 2.3]{critix}).
 It is sufficient to check that $Z^{(q,r)}_t(x)$ is a supermartingale and then integrate over $E$.
The process $Z^{(q,r)}_t(x)$  is continuous and
 Itô calculus yields 
 $$\dd  Z^{(q,r)}_t(x)=  \left(\sqrt{2d} Z^{(q,r)}_t(x)-  W^{(q,r)}_t(x)\right) \dd \bar X_t(x)- \rho'_r(t) W^{(q,r)}_t(x)\dd t.$$ 
We can then conclude from the fact that $\rho'_r(t)\ge 0$ and $\bar X_\cdot(x)$ is a standard Brownian motion.
For $(ii)$, to prove the first line in \eqref{rlimit}, it is sufficient to show that 
\begin{equation}
\lim_{r\to \infty}\liminf_{t\to \infty}\bbE\left[ Z^{(q,r)}_t(x)\right]=q
\end{equation} 
 \noindent Furthermore we have
 \begin{equation}\label{dope}\begin{split}
 \bbE[Z^{(q,r)}_t(x)]&= \bbE\left[ (\sqrt{2d} t+q-\rho_r(t)-\bar X_t(x))e^{\sqrt{2d}X_t-d\bbE[X^2_t]} \ind_{A^{(q,r)}_t(x)}\right]\\
 &=  \bbE\left[ (q-\rho_r(t)-\bar X_t(x))\ind_{\{ \forall s\in [0,t], \bar X_s\le q-\rho_r(s) \} }\right]\\&=
 q \bQ_q\left[\forall s\in [0,t],\ \beta_s\ge \rho_r(s) \right]- \rho_r(t)\bP_q[\forall  s\in [0,t], B_s\ge \rho_r(s)].
 \end{split}
 \end{equation}
Let us take a look at the second term.  
Since \eqref{besselcond} implies that $\rho(u)=o(u^{-1/2})$ we have
\begin{equation}
\lim_{t\to \infty}\rho(t)\bP_q[\forall  s\in [0,t], B_s\ge 0]=0.
\end{equation}
Concerning the first term, we have 
\begin{equation}
\bQ_q\left[\forall s\in [0,t],\ \beta_s\ge \rho_r(s)\right]\ge  \bP_q\left[\forall s>0,\ \beta_s\ge \rho_r(s) \right],
\end{equation}
and Lemma \ref{labesse} allows us to conclude.
For the second line in \eqref{rlimit},
we obtain, applying Cameron-Martin formula like in \eqref{dope} 
\begin{equation}\begin{split}\label{sssds}
\bbE[W^{(q)}_t(x)-W^{(q,r)}_t(x)]&= \bP_q[\forall s\in[0,t], B_s\ge 0 \ ; \ \exists u\in [0,t], 
B_u\le \rho_r(u)]\\ &= \bP_q[\forall s\in[0,t], B_s\ge 0] \bQ_{q,t}\left[  \exists u\in [0,t], B_u\le \rho_r(u) \right].¨
\end{split}
\end{equation}
Now we have from Lemma \ref{labesse} item $(i)$
\begin{equation}\label{straight}
 \bP_q[\forall s\in[0,t], B_s\ge 0]\le q \sqrt{\frac{2}{\pi t}}.
\end{equation}
Using the stochastic comparison between $\bQ_q$ and $\bQ_{q,t}$ which follows from FKG, we have for every $t\ge 0$
\begin{multline}
 \bQ_{q,t}\left[  \exists u\in [0,t], B_u\le \rho_r(u) \right] \\ \le  \bQ_q\left[  \exists u\in [0,t], \beta_u\le \rho_r(u)\right]\le \bQ_q\left[  \exists u\ge 0, \beta_u\le \rho_r(u)\right]
\end{multline}
and the r.h.s.\ tends to zero when $r\to \infty$ by Lemma \ref{labesse}. 
For the third line in \eqref{rlimit}, we observe that 
\begin{multline}\label{triphop}
\bbE[W^{(q)}_\gep(x)-W^{(q,r)}_{\gep}(x)]\\
= \bP_q[\forall s\in[0,t_{\gep}], B_s\ge -\delta(s,\gep) \ ; \ \exists u\in [0,t_{\gep}],\
B_u\le\rho_r(u)-\delta(u,\gep)].
\end{multline}
where 
$$\delta(s,\gep):=\sqrt{2d}(s-\overline K_{s,\gep}(x))=\int^s_0\sqrt{2d} (1-Q_{u,\gep}(x))\dd u=:\int^s_0 \eta(u,\gep)\dd u.$$
The l.h.s.\ of \eqref{triphop} can be bounded in the same way as \eqref{sssds}, but the $\delta(u,\gep)$ in the formula  requires a bit of technical care.
We set $\delta_{\gep}=\delta(t_{\gep}/2,\gep)$, we use the bound  $\delta(s,\gep)\le \delta_{\gep}$ for $t\le t_{\gep}/2$ and  $\delta(s,\gep)\le \delta_{\gep}\le C$ (for some finite constant $C$) for $t\in [t_{\gep}/2,t_{\gep}]$ we obtain
\begin{multline}
 \bbE[W^{(q)}_\gep(x)-W^{(q,r)}_{\gep}(x)]\\
 \le  \bP_q[\forall s\in[0,t_{\gep}], B_s\ge -C \ ; \ \exists u\in [t_{\gep/2},t_{\gep}],\
B_u\le\rho_r(u)]\\
+ \bP_q[\forall s\in[0,t_{\gep}/2], B_s\ge -\delta_{\gep} \ ; \ \exists u\in [0,t_{\gep/2}],\
B_u\le\rho_r(u)].
\end{multline}
Since we have 
\begin{equation}
 \max(\bP_q[\forall s\in[0,t_{\gep}], B_s\ge -C ], \bP_q[\forall s\in[0,t_{\gep}/2], B_s\ge -\delta_{\gep}])\le C'(\log (1/\gep))^{-1/2}.
\end{equation}
we can conclude the proof of \eqref{rlimit} if we show that that the conditional probabilities converge to zero, or more precisely 
\begin{equation}\begin{split}     \label{exhaust}            
 \lim_{\gep\to 0}\bQ_{q+C,t_{\gep}}[ \ \exists u\in [t_{\gep/2},t_{\gep}],\
B_u\le\rho_r(u)+C]&=0,\\
           \lim_{r\to \infty}\limsup_{\gep \to 0}\bQ_{q+\delta_{\gep},t_{\gep}/2}[ \exists u\in [0,t_{\gep/2}],\
B_u\le\rho_r(u)+\delta_{\gep}]&=0
     \end{split}
\end{equation}
For the first line, we  observe that since $\rho_r$ is a lower-envelope
\begin{equation}
  \lim_{\gep\to 0}\bQ_{q+C}[ \ \exists u\in [t_{\gep/2},t_{\gep}],\
B_u\le\rho_r(u)+C]=0,
\end{equation}
and use Lemma \ref{labesse} item (v) to get that the statement also holds when $\bQ_{q+C}$ is replaced by  $\bQ_{q+C,t_{\gep}}$. For the second line in \eqref{exhaust} we can repeat the proof of \eqref{browntobess} with minor modification to obtain that 
\begin{equation}
 \lim_{\gep \to 0}\bQ_{q+\delta_{\gep},t_{\gep}/2}[ \exists u\in [0,t_{\gep/2}],\
B_u\le\rho_r(u)+\delta_{\gep}]=\bQ_{q}[ \exists u\ge 0,\
B_u\le\rho_r(u)].
\end{equation}
and conclude from Lemma \eqref{labesse} item (iv). Let us finally briefly discuss the three remaining items.
For $(iii)$ we refer to \cite[Corollary 2.5]{critix} and for  $(iv)$  to \cite[Proposition 2.4]{critix}.
Finally $(v)$ is direct consequence of the combination of $(iii)$ and $(iv)$.
\qed

 \subsection{Proof of Proposition \ref{smalltek}}\label{smalltakk}

\noindent
For $n\ge 0$, we split $\gO$ into $2^n$ subsets of diameter $d_n:=2^{-n}\prod^n_{i=1}(1-a_n)=2^{-n} e^{-\vartheta(n)} $.
We set 
$$I_{n,1}:= \gO\cap\left[ 0,  d_n \right]$$
and let $I_{n,i}$ ($i\in \lint 1,2^n\rint$) denote the $2^{n}$ disjoint isometric copies of $I_{n,1}$ that can be found in $\gO$ (which we order from left to right).
We let $a_i$ denote the minimal element of $I_{n,i}$.
We set $t_n:=\log (1/\theta_n)=(\log 2) n +\vartheta(n)$.
We define
\begin{equation}
A_{n,i}:= [t_n, t_{n+1})\times I_{n,i}, \quad 
 Y_{n,i}=\bar X_{t_n}(a_i) , \quad 
Z_{n,i}:= \max_{(t,x)\in A_{n,i}} \left(\bar X_{t}(x)-Y_{n,{i}}\right). \end{equation}
The statement we want to prove can be restated as follows, for every $\alpha\in(0,1)$  we have
\begin{equation}\label{dabest}
 \sup_{n\ge 0} \max_{{\bf i}\in I_n} \left[ Y_{n,{ i}}+Z_{n,{i}} - \sqrt{2} t_n +\frac{\alpha \vartheta(n)}{\sqrt{2}}\right]<\infty. 
 \end{equation}
The following result  ensures that the tail of $Z_{n,{\bf i}}$ are uniformly sub-Gaussian. 
The statement is almost identical to that of \cite[Lemma A.1]{critix} (which we provide as a reference for the proof).
\begin{lemma}\label{ggttail}
 There exists a constant $c>0$ such that for all $n\ge 0$, ${\bf i}\in \mathbb Z^d$ and $\gl\ge 0$
 \begin{equation}\label{kilaro}
 \bbP[Z_{n,i}\ge \gl ] \le 2 \exp\left( - c \gl^2  \right).
\end{equation}
\end{lemma}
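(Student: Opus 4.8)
The plan is to derive \eqref{kilaro} from two estimates that are uniform in $n$: a uniform bound on the variance of the centred Gaussian field $\big(\bar X_t(x)-\bar X_{t_n}(a_i)\big)_{(t,x)\in A_{n,i}}$ — whose supremum is $Z_{n,i}$ — and a uniform bound on the metric entropy of $A_{n,i}$ for the canonical pseudometric of this field. Given these, Dudley's entropy bound produces a constant $\mathfrak m$ with $\bbE[Z_{n,i}]\le\mathfrak m$ for all $n,i$, and the Borell--TIS inequality upgrades this to $\bbP[Z_{n,i}\ge\lambda]\le 2e^{-c\lambda^2}$ after absorbing the mean. This is exactly the scheme of \cite[Lemma A.1]{critix}, which we follow.

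First I would remove the dependence on $i$. Since $\bar X$ has covariance $\bbE[\bar X_t(x)\bar X_s(y)]=\bar K_{s\wedge t}(x-y)$, which depends only on $x-y$ (the H\"older part $K_0$ cancels when passing from $X_t$ to $\bar X_t=X_t-X_0$), the field $\bar X$ is translation invariant in space; combined with the self-similarity of $\gO$ — each level-$n$ piece $I_{n,i}$ is a translate of $I_{n,1}$ — this shows that the law of $Z_{n,i}$ does not depend on $i$. So it suffices to bound $Z_n:=Z_{n,1}$, and after translating so that $a_1=0$ we are looking at $\sup_{(t,x)\in A_{n,1}}\big(\bar X_t(x)-\bar X_{t_n}(0)\big)$ with $A_{n,1}=[t_n,t_{n+1})\times I_{n,1}$, $I_{n,1}\subset[0,d_n]$, $d_n=2^{-n}e^{-\vartheta(n)}$ and $t_n=\log(1/d_n)$.

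The heart is a scale-invariance estimate. For $s\le t$ in $[t_n,t_{n+1})$ and $|x-y|\le d_n$ one has, using \eqref{kkttt} and $\bar K_t(0)=t$, $\Var(\bar X_t(x)-\bar X_s(y))=(t-s)+2\big(s-\bar K_s(x-y)\big)$, where $s-\bar K_s(z)=\int_0^{s'}(1-\eta_1 e^{-\eta_2 u})\big(1-\kappa(e^u z)\big)\dd u$ is nonnegative since $0\le\kappa\le 1$. Using that $\kappa$ is smooth and nonnegative with $\kappa(0)=1$, so that $1-\kappa(w)\le C|w|$ for all $w$, together with $s'\le s+\eta_1/\eta_2\le t_{n+1}+\eta_1/\eta_2=t_n+O(1)$, one gets $s-\bar K_s(z)\le C|z|\int_0^{s'}e^u\dd u\le C|z|e^{s'}\le C_\ast e^{t_n}|z|$ for a constant $C_\ast$ independent of $n$. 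Taking $y=0$, $|x|\le d_n$, and using that $t_{n+1}-t_n=\log 2+\vartheta(n+1)-\vartheta(n)$ is bounded (by concavity of $\vartheta$), this yields the uniform variance bound $\sup_{(t,x)\in A_{n,1}}\Var(\bar X_t(x)-\bar X_{t_n}(0))\le(t_{n+1}-t_n)+2C_\ast\le\sigma^2$. More generally the canonical pseudometric satisfies $d((t,x),(s,y))^2\le|t-s|+2C_\ast e^{t_n}|x-y|$, so a $d$-ball of radius $\gep$ contains a $t$-interval of length $\gep^2/2$ and an $x$-interval of length $(\gep^2/4C_\ast)e^{-t_n}$; since $A_{n,1}$ has $t$-width $\le t_{n+1}-t_n$ and $x$-width $\le e^{-t_n}$, its covering number satisfies $N(\gep)\le C'(1+\gep^{-4})$ uniformly in $n$. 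Then $\int_0^{\sigma}\sqrt{\log N(\gep)}\,\dd\gep<\infty$, so Dudley's bound gives $\bbE[Z_n]\le\mathfrak m$ with $\mathfrak m$ independent of $n$, and Borell--TIS gives $\bbP[Z_n\ge\bbE Z_n+\lambda]\le e^{-\lambda^2/(2\sigma^2)}$, from which $\bbP[Z_n\ge\lambda]\le 2e^{-c\lambda^2}$ follows for a suitable $c>0$ (using $\bbP\le 1$ for $\lambda$ below a threshold).

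The one genuinely delicate point is the estimate $s-\bar K_s(z)\le C_\ast e^{t_n}|z|$: one must use that although $\bar K_s$ integrates over all scales up to $s'\approx s$, which grows with $n$, the box $A_{n,i}$ contracts in the $x$-direction at exactly the compensating rate $e^{-t_n}$, and that $s$ never exceeds $t_{n+1}=t_n+O(1)$ — the last point relying on the boundedness of $\vartheta(n+1)-\vartheta(n)$, i.e.\ on the concavity of $\vartheta$. Everything after that is the standard Borell--TIS/Dudley machinery, which is why the present lemma is, as noted, essentially \cite[Lemma A.1]{critix}.
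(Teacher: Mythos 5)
Your proof is correct and follows the same route as the paper, which itself only cites \cite[Lemma A.1]{critix}: translation invariance to reduce to $i=1$, the scale-compensation estimate $s-\bar K_s(z)\le C|z|e^{s'}$ (bounded on $A_{n,1}$ because $|z|\le d_n=e^{-t_n}$ and $s'\le t_{n+1}+\eta_1/\eta_2=t_n+O(1)$, the latter using concavity of $\vartheta$), then Dudley and Borell--TIS. You correctly identify and handle the two $n$-dependent quantities (variance and entropy) and the cleanup from a bound on $\bbP[Z_n\ge\bbE Z_n+\lambda]$ to a bound on $\bbP[Z_n\ge\lambda]$.
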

\noindent Combining \eqref{kilaro} and the fact that $Y_{n,i}$ is a centered Gaussian with variance $t_n$, we obtain
\begin{equation}\label{convovo}
 \bbP\left[ Y_{n,i}+Z_{n, i} \ge A \right] \le \frac{2}{\sqrt{2\pi t_n}}\int_{\bbR}  \exp\left( - \frac{u^2}{2 t_n} - c (A-u)_+^2  \right) \dd u.
\end{equation}
This implies  that 
\begin{equation*}
\sum_{n\ge 0}  \sum_{i =1}^{2^n}\bbP \left[ Y_{n, i}+Z_{n, i} - \sqrt{2d}t_n  +\frac{\alpha \vartheta(n)}{\sqrt{2}}\ge  0\right]\le  C \sum_{n\ge 0}  
\sum_{i \in I_n}  e^{\alpha \vartheta(n)- t_n }= C \sum_{n\ge 0}  e^{(\alpha-1) \vartheta(n)}.
\end{equation*}
The last sum is finite ($\alpha<1$ and $\vartheta(n)>n^{1/3}$ for large $n$) so we can conclude  using Borel-Cantelli.
\qed

  \subsection{Proof of Lemma \ref{topzzzz}} \label{toupz}
  
  Using the notation of the the previous proof, 
  we  let   $d^-_n= a_{n+1}    d_n$. 
Note that $d^-_{n-1}$ and $d_{n-1}$  are respectively  the minimal  and maximal distances between  an element of  $I_{n,1}$ and one of $I_{n,2}$ for $n\ge 1$.
Using the facts that the diagonal has zero measure under $\mu^{\otimes 2}$ and that 
  $I_{n,2i-1}\times I_{n,2i}$ is the image of $I_{n,1}\times I_{n,2}$ by a measure preserving isometry we obtain that
  \begin{equation}\begin{split}
  \int_{\Omega\times \gO} \frac{\mu(\dd x)\mu(\dd y)}{|x-y| e^{\alpha \vartheta(|x-y|)}}
  &=\sum_{n\ge 1} \sum_{i=1}^{2^{n-1}}\int _{(I_{n,2i-1}\times I_{n,2i})\cup(I_{n,2i-1}\times I_{n,2i})}  \frac{\mu(\dd x)\mu(\dd y)}{|x-y| e^{\alpha \vartheta(|x-y|)}}\\
  &= \sum_{n\ge 1}  2^n \int_{I_{n,1}\times I_{n,2}}   \frac{\mu(\dd x)\mu(\dd y)}{|x-y| e^{\alpha \vartheta(|x-y|)}}\\
  &\le \sum_{n\ge 1} 2^{n} \mu^{\otimes 2}\left(I_{n,1}\times I_{n,2}\right)  (d^-_{n-1})^{-1} e^{-\alpha \vartheta(|\log d_{n-1}|)}\\
  &=  2\sum_{n\ge 0} a^{-1}_n e^{\vartheta(n)}e^{-\alpha \vartheta(|\log d_n|)}.
  \end{split}\end{equation}
Since  $\vartheta$ is regularly varying and $|\log(d_n)|= n \log 2 +o(n)$  we have $\vartheta(|\log d_n|)=\sqrt{\log 2}\vartheta(n)(1+o(1))$,
using the fact that $\vartheta(n)\ge n^{1/3}$ and $a^{-1}_n\le n^{2/3}$ for $n$ sufficiently large, this allows to conclude \qed.

  \subsection{Proof of Proposition \ref{lateknik}}

We first prove \eqref{tek1} and then indicate how \eqref{tek2} follows from the same computation with minor modifications.
 Using Cameron-Martin formula  we obtain that 
 \begin{multline}\label{a11}
 \!\! \bbE\left[W^{(q,r)}_t(x) W^{(q,r)}_t(y)\right]\\ =
  e^{2d K_t(x,y)} \bbP\left[ \forall s\in [0,t],  \bar X_s(x) \! \vee  \! \bar X_s(y)\le  q - \sqrt{2d} \bar K_s(x,y)-\rho_r(s) \right].
 \end{multline}
To estimate the probability on the right-hand side, we perform a couple of simplification. 
Recalling the definition of $u(x,y,t)$, from \eqref{thelogbound}, there exists  $C>0$ such that 
$$   \bar K_s(x,y) \ge s\wedge u - C.$$
For the remainder of the proof, we set $q'=q+C\sqrt{2d}$. Another observation is that $\bar X_{\cdot}(x)$ and $\bar X_{\cdot}(y)$ play symmetric roles so that 
adding the restriction $\bar X_u(x)\le \bar X_u(y)$ only changes the probability of the event  in the r.h.s.\ of \eqref{a11} by a factor $1/2$. It is thus smaller than 
\begin{multline}\label{wopwopwop}
2 \bbP\left[ \forall s\in [0,t], \ \bar X_s(x) \! \vee  \! \bar X_s(y)\le  q' - \sqrt{2d}(u\wedge s)-\rho_r(s) \ ; \ \bar X_u(x)\le \bar X_u(y) \right] 
 \\ \le 2 \bbP\left[ F_1 \cap F_2 \cap F_3\right]= 2 \bbE\left[ \ind_{F_1} \bbP[ F_2\cap F_3  \ | \ \cF_u] \right].
 \end{multline}
 where the events $(F_i)^3_{i=1}$ are defined by
 \begin{equation}
  \begin{split}
   F_1&:=  \{ \forall s\in [0,u],  \bar X_s(x)\le  q' - \sqrt{2d} s -\rho_r(s) \},\\
   F_2&:=\{ \forall s\in [0,t-u],  X^{(u)}_s(x)\le  q' - \bar X_u(x)-\sqrt{2d}u-\rho_r(s) \},\\
   F_3&:=\{ \forall s\in [0,t-u],  X^{(u)}_s(y)\le  q' - \bar X_u(x)-\sqrt{2d}u-\rho_r(s) \}.
  \end{split}
 \end{equation}
The inequality in \eqref{wopwopwop} is obtained by ignoring the requirement $\bar X_s(y)\le q'-\sqrt{2d}s-\rho_r(s)$ for $s\in [0,u]$, 
and using the fact that for $s\in[u,t]$
$$\{ \bar X_s(y)\le q'-\sqrt{2d} u-\rho_r(s)\} \cap\{  \bar X_u(x)\le \bar X_u(y) \}  \subset \{ X^{(u)}_s(y)\le  q' - \bar X_u(x)-\sqrt{2d}u-\rho_r(s)\}.$$ 
Since $X^{(u)}_{\cdot}(x)$ and $X^{(u)}_{\cdot}(y)$ are independent and independent of $\mathcal F_u$, the events $F_2$ and $F_3$ are conditionally independent. Hence we have 
\begin{equation}
\label{cronde1}
  \bbP\left[ F_2\cap F_3 \ | \ \cF_u\right] \le 1\wedge \left(\frac{2(q'-\bar X_u(x)-\sqrt{2d}u-\rho_r(u))^2_+}{\pi(t-u)}\right).
\end{equation}
Now to compute $\bbE\left[ \ind_{F_1} \bbP[ F_2\cap F_3  \ | \ \cF_u] \right]$, we use
 \eqref{bridjoux} and obtain that 
\begin{equation}\label{cronde2}
 \bbE[F_1 \ | \ \bar X_u(x)]\le 1\wedge \left(\frac{2q'(q'-\bar X_u(x)-\sqrt{2}u-\rho_r(u))_+}{u}\right).
\end{equation}
Putting together \eqref{cronde1} and \eqref{cronde2} and integrating w.r.t.\  $z=(q'-\bar X_u(x)-\sqrt{2}u-\rho_r(u))$ we have
\begin{multline}
\bbP\left[ F_1 \cap F_2 \cap F_3\right]\le  \int^{\infty}_0 \frac{e^{-\frac{(z-q'+\sqrt{2d} u+\rho_r(u))^2}{2u}}}{\sqrt{2\pi u}}
 \left(1\wedge \frac{2q'z}{u}  \right)     \left(1\wedge \frac{z^2}{\pi(t-u)}\right) \dd z\\
 \le \frac{C e^{-du-\sqrt{2d}\rho_r(u)}}{(u + 1)^{3/2}[(t-u)+ 1]}.
\end{multline}
To obtain the last inequality, we can replace $1\wedge \frac{2q'z}{u} $ by $1$ if $u\le 1$ and  by $\frac{2q'z}{u}$ if $u\ge 1$, and 
 $1\wedge \frac{z^2}{\pi(t-u)}$ by $1$ if $t-u\le 1$ or $\frac{z^2}{\pi(t-u)}$ if $t-u\ge 1$. 
 Since in \eqref{a11}  we have $e^{2d K_t(x,y)}\le C e^{2du}$ (cf. \eqref{thelogbound}) this concludes the proof of \eqref{tek1}.
For \eqref{tek2} we have
\begin{multline}
  \bbE\left[ W^{(q,r)}_\gep(x) W^{(q,r)}_\gep(y) \right] \\ \le
  e^{2d K_\gep(x,y)} \bbP\left[ \forall s\in [0,t],  \bar X_s(x)  \vee  \bar X_s(y)\le  q - \sqrt{2d} \bar K_{s,\gep,0}(x,y)-\rho_r(s) \right].
 \end{multline}
Recalling the definition of $v(x,y,\gep)$, we have  $K_\gep(x,y)\le v+C$
and 
$\bar K_{s,\gep,0}(x,y) \ge s\wedge v-C.$
We then repeat the proof of \eqref{tek1} to obtain the same conclusion with $u$ replaced by $v$. To finish we can restrict ourself to the proof of the first consequence $(i)$ since the proof of $(ii)$ is similar. We have 
\begin{equation}\begin{split}
 \bbE\left[ (M^{(q,r)}_t(E))^2 \right]&\le C \int_{E\times E} \frac{ e^{d u-\sqrt{2d}\rho(u)}\mu(\dd x) \mu(\dd y)}{ (u+1)^{3/2}  (t-u+1)}
 \\ 
 &\le  C t^{-1}\int_{E\times E} e^{d u-\sqrt{2d}\rho(u)}\mu(\dd x) \mu(\dd y)\\
 &\le C t^{-1}\int_{E\times E }\frac{\mu(\dd x) \mu(\dd y)}{|x-y|^d e^{\sqrt{2d}\rho(\log \frac1{|x-y|})}}.
\end{split}\end{equation}
In the last line we have used $\log \frac{1}{|x-y|}\ge u(x,y,t)$, hence the integral above is larger than 
\begin{equation*}\begin{split}
\int_{E\times E } \frac{\mu(\dd x) \mu(\dd y)}{|x-y|^d e^{\sqrt{2d}\log (\frac 1{|x-y|} )}}
&\le    e^{-(\sqrt{2}-1)\rho(1/\Diam(|E|)) }\int_{E\times E } \frac{\mu(\dd x) \mu(\dd y)}{|x-y|^d e^{\rho(\log \frac 1{|x-y|})}}\\
&\le    e^{-(\sqrt{2}-1)\rho(1/\Diam(|E|)) }\int_{E\times B(0,R)} \frac{\mu(\dd x) \mu(\dd y)\ind_{\{|x-y|\le 1\}}}{|x-y|^d e^{\rho(\log \frac 1{|x-y|})}}.
\end{split}\end{equation*}
This concludes the proof, defining $\mu_R$ by setting 
$$\frac{\dd \nu_R}{\dd \mu}(x) = C\int_{B(0,R)} \frac{\ind_{\{|x-y|\le 1\}}\mu(\dd y)}{|x-y|^d e^{\rho(\log \frac 1{|x-y|})}}$$
The local finiteness of $\nu_R$ comes from \eqref{clazomp}. \qed

\bibliographystyle{plain}
\bibliography{bibliography.bib}

\end{document}